\newsavebox{\@brx}
\newcommand{\llangle}[1][]{\savebox{\@brx}{\(\m@th{#1\langle}\)}%
  \mathopen{\copy\@brx\kern-0.5\wd\@brx\usebox{\@brx}}}
\newcommand{\rrangle}[1][]{\savebox{\@brx}{\(\m@th{#1\rangle}\)}%
  \mathclose{\copy\@brx\kern-0.5\wd\@brx\usebox{\@brx}}}
\newtheorem{theorem}{Theorem}
\newtheorem{lemma}[theorem]{Lemma}
\newtheorem{proposition}[theorem]{Proposition}
\newtheorem{corollary}[theorem]{Corollary}
\theoremstyle{definition}
\newtheorem{definition}[theorem]{Definition}
\theoremstyle{remark}
\newtheorem{remark}[theorem]{Remark}
\numberwithin{theorem}{section}
\numberwithin{equation}{section}
\DeclareMathOperator{\dist}{dist}
\DeclareMathOperator*{\essliminf}{ess\,lim\,inf}
\DeclareMathOperator*{\essinf}{ess\,inf}
\newcommand{\N}{\ensuremath{\mathbb{N}}}
\newcommand{\R}{\ensuremath{\mathbb{R}}}
\newcommand{\spt}{\operatorname{spt}}
\newcommand{\mollifytime}[2]{[\![ #1 ]\!]_{#2}}
\newcommand{\dx}{\mathrm{d}x}
\newcommand{\dt}{\mathrm{d}t}
\def\Xint#1{\mathchoice
    {\XXint\displaystyle\textstyle{#1}}%
    {\XXint\textstyle\scriptstyle{#1}}%
    {\XXint\scriptstyle\scriptscriptstyle{#1}}%
    {\XXint\scriptscriptstyle\scriptscriptstyle{#1}}%
    \!\int}
\def\XXint#1#2#3{\setbox0=\hbox{$#1{#2#3}{\int}$}
    \vcenter{\hbox{$#2#3$}}\kern-0.5\wd0}
\def\bint{\Xint-}
\def\dashint{\Xint{\raise4pt\hbox to7pt{\hrulefill}}}
\def\XXiint#1#2#3{\setbox0=\hbox{$#1{#2#3}{\iint}$}
    \vcenter{\hbox{$#2#3$}}\kern-0.5\wd0}
\renewcommand{\epsilon}{\varepsilon}
\newcommand{\eps}{\varepsilon}
\renewcommand{\rho}{\varrho}
\renewcommand{\epsilon}{\varepsilon}
\renewcommand{\rho}{\varrho}
\renewcommand{\d}{\:\! \mathrm{d}}
\DeclareMathOperator{\loc}{loc}
\numberwithin{equation}{section}
\subjclass[2020]{35B51, 35D30, 35K67, 35K86}
\keywords{porous medium equation, obstacle problem, fast diffusion}
\begin{document}
\renewcommand{\refname}{References} 
\renewcommand{\abstractname}{Abstract} 
\title[Obstacle problem for the PME]{On two notions of solutions to
  the obstacle problem for the singular porous medium equation}
\author[K. Moring]{Kristian Moring}
\address{Kristian Moring\\
	Fakult\"at f\"ur Mathematik, Universit\"at Duisburg-Essen\\
	Thea-Leymann-Str. 9, 45127 Essen, Germany}
\email{kristian.moring@uni-due.de}

\author[C. Scheven]{Christoph Scheven}
\address{Christoph Scheven\\
	Fakult\"at f\"ur Mathematik, Universit\"at Duisburg-Essen\\
	Thea-Leymann-Str. 9, 45127 Essen, Germany}
\email{christoph.scheven@uni-due.de}


\begin{abstract}
We show that two different notions of solutions to the obstacle
problem for the porous medium equation, a potential theoretic notion
and a notion based on a variational inequality, coincide for regular enough compactly supported obstacles. 
\end{abstract}
\makeatother

\maketitle

\section{Introduction}

In this paper we consider the porous medium equation (PME for short), which can be written as 
\begin{equation}\label{evo_eqn}
\partial_t u -\Delta( u^m)=0, 
\end{equation}  
for $0<m<\infty$ and nonnegative $u$. Here we focus on the singular range $0 < m < 1$. For the standard theory of this equation we refer to the monographs~\cite{Vazquez,Vazquez2,DK}.

In the obstacle problem, the objective roughly is to find a solution to the equation with an additional constraint. The constraint here is that the solution must stay above the given obstacle in the whole domain. Besides being interesting in its own right, the obstacle problem plays a role as a standard tool in nonlinear potential theory for example (see e.g.~\cite{HKM,KinnunenLindqvist2006,KinnunenLindqvist_crelle}). In this paper, we consider two different notions of solutions to the obstacle problem: weak solution and the minimal supersolution above the given obstacle. 

For the porous medium equation, the existence of weak (or variational) solutions has been studied in~\cite{AL,BLS,Schaetzler1,Schaetzler2}, and regularity questions are addressed in~\cite{MS,CS-obstacle,BLS-holder,MSb}. Typically, existence proofs for weak solutions require sufficient smoothness of the obstacle; in particular, regularity in time is a frequent presumption~\cite{BDM,AL,BLS,Schaetzler1,Schaetzler2}. On the contrary, existence and uniqueness of the minimal supersolution above the given obstacle are rather immediate consequences due to the concept of balayage from potential theory without any differentiability assumptions on the obstacle. 
It is also worth to mention that a more explicit construction of minimal supersolutions above the obstacle based on Schwartz alternating method has been proposed, see e.g.~\cite{korte_lscobstacle,kokusi}.

Main objective of the paper is to show a connection between the
notions of weak solutions to the obstacle problem and minimal
supersolutions above the given obstacle. In the case of the parabolic
$p$-Laplace equation, this direction has been studied
in~\cite{LP}. For the
porous medium equation in the slow diffusion case partial results were
given in~\cite{AvLu}, where the authors showed that the minimal
supersolution is a weak solution to the obstacle problem. In this
paper we are able to show that in the fast diffusion case these
notions actually coincide under appropriate assumptions. More
precisely, we suppose that the obstacle and its first weak derivatives
exist with appropriate integrability conditions such that the
existence of a weak solution is guaranteed
(\hspace{1sp}\cite{BLS,Schaetzler2}). Furthermore, we require that the
obstacle is H\"older continuous to ensure that weak solutions are
continuous (see~\cite{MS,CS-obstacle} and also~\cite{KMN,kokusi} for
the parabolic $p$-Laplace equation).

In Section~\ref{sec:varsol} we introduce weak solutions to the
obstacle problem and prove some properties that are beneficial later
on. More specifically, we show that outside the contact set a weak
solution to the obstacle problem is a weak solution to the obstacle
free problem (Lemma~\ref{l.variational-solution-noncontactset}). In
Section~\ref{sec:minimal} we consider minimal supersolutions above the
given obstacle, which we define via the concept of balayage. The
definition is based on the notion of supercaloric functions that has
been studied for the porous medium equation
e.g. in~\cite{KinnunenLindqvist_crelle,KLLP-supercal,MSch}. Many
useful results on supercaloric functions and their connection to weak
supersolutions in the fast diffusion case were established
in~\cite{MSch}. In particular, it was shown that in each connected
component of the domain the positivity set of a supercaloric function
can be expressed as a countable union of time intervals. Our result on
the connection of the two notions for obstacle problems relies heavily on this property.

The connection between the two notions is established in
Section~\ref{sec:connection}. The precise statement is given
in Theorem~\ref{t.weakvar-is-minimal}. The proof is closely connected to a suitable comparison principle (as in~\cite{AvLu}) together with results on positivity sets of supercaloric functions in the fast diffusion case established in~\cite{MSch}. The main idea is to show that a weak solution to the obstacle problem is below the minimal supersolution in the domain, after which the conclusion is rather immediate by the minimality property of the latter.

In the linear case, the comparison principle for general open sets is
a rather straightforward consequence once the comparison principle in
finite unions of space-time cylinders is at disposal. This is mainly
due to the fact that the class(es) of solutions is closed under
addition of constants. The porous medium equation does not enjoy this
property, which makes such a comparison principle more difficult to
verify. The idea proposed in~\cite{AvLu} was, instead of adding a
small constant to solution, to multiply the solution with a constant close to one. While the resulting function is not a weak solution in the original sense exactly, it still satisfies the equation with a certain source term, which disappears when the constant tends to one. In~\cite{AvLu}, this observation allowed to deduce a comparison principle between solutions and supersolutions in more general sets than space-time cylinders. More precisely, the result was shown in cylinders from which certain compact set can be removed. A crucial assumption that was made in the proof -- due to multiplication instead of addition of a constant -- was that the supersolution stays strictly positive in this compact set.

In order to adapt this comparison principle to the proof that weak solutions to the obstacle problem and minimal supercaloric functions coincide (Theorem~\ref{t.weakvar-is-minimal}), we rely on the fact that in the fast diffusion case positivity sets of supercaloric functions depend only on the time variable (\hspace{1sp}\cite{MSch}). Due to this property, we are able to circumvent the additional positivity assumption on the supersolution (minimal supersolution above the obstacle) and finally conclude the result.

\medskip
 
\noindent
{\bf Acknowledgments.} K.~Moring has been supported by the Magnus Ehrnrooth Foundation.

\section{Preliminaries}

Let $\Omega \subset \R^n$ be an open and bounded set. For
$T>0$ we denote by $\Omega_T := \Omega \times (0,T)$ a space-time
cylinder in $\R^{n+1}$. The parabolic boundary of $\Omega_T$ is defined as $\partial_p \Omega_T := \left( \Omega \times \{0\} \right) \cup \left( \partial \Omega \times [0,T) \right)$. Up next we define weak (super/sub)solutions.

\begin{definition} \label{d.weak_sol}
A measurable function $u: \Omega_T \to [0,\infty]$ satisfying
$$
u^m \in L_{\loc}^2(0,T;H^{1}_{\loc}(\Omega)) \cap L^\frac{1}{m}_{\loc}(\Omega_T)
$$
is called a weak solution to the PME~\eqref{evo_eqn} if and only if $u$ satisfies the integral equality
\begin{equation} \label{eq:wsol}
\iint_{\Omega_T} \left(-u \partial_t \varphi + \nabla u^m  \cdot \nabla \varphi \right)\d x\d t  =0
\end{equation}
for every $\varphi \in C^\infty_0(\Omega_T)$. Further, we say that $u$ is a weak supersolution if the integral above is nonnegative for all nonnegative test functions $\varphi \in C^{\infty}_0(\Omega_T)$. If the integral is nonpositive for such test functions, we call $u$ a weak subsolution.

Furthermore, we say that $u$ is a global weak solution if $u$ satisfies~\eqref{eq:wsol} and
$$
u^m \in L^2(0,T;H^1(\Omega)) \cap L^\frac{1}{m}(\Omega_T).
$$
\end{definition}

We recall an existence and comparison result for continuous, global weak solutions, see~\cite{Abdulla1,Abdulla2,Bjorns_boundary,MSch}.

\begin{theorem} \label{t.existence}
Let $0<m<1$ and $\Omega_T$ be a $C^{1,\alpha}$-cylinder
with $\alpha >0$. Suppose that the function $g \in C(\overline{\Omega_T})$ satisfies $g^m \in L^2(0,T;H^1(\Omega))$ and $\partial_t g^m \in L^\frac{m+1}{m}(\Omega_T)$. Then, there exists a unique global weak solution $u$ to~\eqref{evo_eqn} such that $u \in C(\overline{\Omega_T})$, $u$ is locally H\"older continuous and $u = g$ on $\partial_p \Omega_T$. Moreover, if $g'$ satisfies conditions above and $g\leq g'$ on $\partial_p \Omega$ and $h' \in C(\overline {\Omega_T})$ is a global weak solution with boundary values $g'$ on $\partial_p \Omega_T$, then $h \leq h'$ in $\Omega_T$.
\end{theorem}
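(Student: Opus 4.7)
My plan is to construct $u$ as the limit of solutions to a non-degenerate approximation and to establish uniqueness and comparison by a Steklov-averaged monotonicity argument. For $\eps>0$ I would solve the quasilinear problem
\[
\partial_t u_\eps - \Delta\bigl((u_\eps+\eps)^m - \eps^m\bigr) = 0 \text{ in } \Omega_T, \qquad u_\eps = g \text{ on } \partial_p\Omega_T,
\]
whose principal part is uniformly parabolic as long as $u_\eps\ge 0$. The $C^{1,\alpha}$-regularity of $\partial\Omega$ permits the construction of classical barriers at each lateral boundary point, so standard quasilinear theory produces continuous approximants $u_\eps \in C(\overline{\Omega_T})$ attaining the boundary data $g$. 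Using a Steklov-averaged version of $(u_\eps+\eps)^m - g^m$ as a test function and exploiting the hypotheses $g^m\in \L^2(0,T;H^1(\Omega))$ and $\partial_t g^m\in \L^{(m+1)/m}(\Omega_T)$, I obtain $\eps$-uniform bounds of the form
\[
\|(u_\eps+\eps)^m\|_{\L^2(0,T;H^1(\Omega))} + \|u_\eps\|_{\L^\infty(0,T;\L^{m+1}(\Omega))} \le C.
\]

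A compactness argument of Alt--Luckhaus / Aubin--Lions type, combining this $\L^2H^1$-bound with a control on $\partial_t u_\eps$ in a negative Sobolev norm read directly off the equation, then yields along a subsequence $(u_\eps+\eps)^m \to u^m$ a.e.\ and hence $u_\eps \to u$ a.e. This suffices to pass to the limit in the weak formulation~\eqref{eq:wsol} and produce the desired global weak solution $u$. Local H\"older continuity of $u$ is inherited from the interior regularity theory for the singular PME developed in~\cite{DK}, while continuity up to $\partial_p\Omega_T$ follows by constructing barriers available at lateral points thanks to $\partial\Omega\in C^{1,\alpha}$ and at the initial slice from the continuity of $g(\cdot,0)$.

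For the comparison claim (which also forces uniqueness), let $u, u'$ be continuous global weak solutions with boundary data $g \le g'$. Writing $w = u^m$, $w' = (u')^m$ and working with Steklov averages $\mollifytime{\cdot}{h}$ in time, I test the difference of the averaged equations with $\varphi_h = (\mollifytime{w}{h}-\mollifytime{w'}{h})_+\,\eta(t)$, where $\eta$ is a Lipschitz cut-off in $t$. As $h\to 0$, the elliptic contribution is nonnegative by the monotonicity of $s\mapsto s^m$, and the parabolic term produces a nonnegative convex quantity bounded below by $\int_\Omega (u-u')_+^{m+1}(\cdot,t_0)$ for a.e.\ $t_0\in(0,T)$; since $(w-w')_+\equiv 0$ on $\partial_p\Omega_T$ by the boundary ordering together with the already established boundary continuity, no boundary integrals survive and one concludes $u\le u'$ a.e.\ in $\Omega_T$. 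The main technical obstacle is to justify these test functions in the absence of any a priori time derivative for $u$: Steklov averaging resolves the issue, but only after extending $u$ and $g$ slightly past $t=0$ and $t=T$, and it is precisely here that the hypothesis $\partial_t g^m\in \L^{(m+1)/m}(\Omega_T)$ is consumed.
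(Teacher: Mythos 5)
This theorem is not proved in the paper: it is recalled from the literature with the citation list \cite{Abdulla1,Abdulla2,Bjorns_boundary,MSch}, so there is no in-paper argument to compare against. Your sketch follows the standard route that those references implement (non-degenerate regularization, energy estimates from testing with $(u_\eps+\eps)^m-g^m$, Alt--Luckhaus compactness, barriers for boundary continuity, and an $L^1$-contraction-type comparison), and at the level of a plan it is the right one.

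Two places where the sketch is too quick to count as a proof. First, the boundary continuity of the limit $u$. For each fixed $\eps>0$ the approximate problem is indeed uniformly parabolic and continuity up to $\partial_p\Omega_T$ is classical, but continuity does not pass to the pointwise a.e.\ limit: you need an $\eps$-uniform modulus of continuity at the parabolic boundary, i.e.\ barriers that survive as $\eps\downarrow0$ for the singular equation itself. Constructing such barriers at lateral points of a $C^{1,\alpha}$ (or even less regular) domain is precisely the content of the cited works of Abdulla and of Bj\"orn--Bj\"orn--Gianazza--Siljander; it is the hardest part of the theorem and cannot be dismissed as ``standard quasilinear theory.'' Relatedly, your claim that the regularized principal part is uniformly parabolic ``as long as $u_\eps\ge0$'' has the degeneracy on the wrong side: for $0<m<1$ the diffusivity $m(s+\eps)^{m-1}$ blows up at $s=0$ and degenerates as $s\to\infty$, so uniform parabolicity requires the a priori upper bound $u_\eps\le\sup_{\partial_p\Omega_T}g$ (available from the maximum principle since constants solve the regularized equation, but it must be invoked). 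Second, in the comparison step the admissibility of $(w-w')_+\eta$ and the sign of the parabolic pairing $\llangle\partial_t(u-u'),(u^m-(u')^m)_+\rrangle$ rest on a chain rule for truncations of the type proved by Alt and Luckhaus; this is standard but is exactly where the argument could silently fail in the singular range, so it deserves an explicit reference or proof rather than the phrase ``produces a nonnegative convex quantity.''
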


The next result states that a weak supersolution is lower semicontinuous after possible redefinition in a set of measure zero, see~\cite{Naian}.

\begin{theorem} \label{t.super_lsc}
Suppose that $m > 0$. Let $u$ be a weak supersolution according to
Definition~\ref{d.weak_sol}. Then, there exists a lower semicontinuous
function $u_*$ such that $u_*(x,t) = u(x,t)$ for a.e. $(x,t) \in
\Omega_T$. Moreover, $u_*$ satisfies
\begin{equation}\label{def:u-star}
u_*(x,t) = \essliminf_{\substack{(y,s) \to (x,t) \\ s<t}} u(y,s) =
\lim_{\rho\to0}\left( \essinf_{B_\rho(x)\times (t-\rho^2,t)} u
\right).
\end{equation}
\end{theorem}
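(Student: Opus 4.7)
The plan is to take $u_*$ to be the function
$$u_*(x,t) := \lim_{\rho \to 0^+}\left( \essinf_{B_\rho(x) \times (t-\rho^2,t)} u\right)$$
and to verify three items in turn: (i) the limit exists and coincides with the essliminf representation, (ii) $u_*$ is lower semicontinuous on $\Omega_T$, and (iii) $u_* = u$ almost everywhere. The defining limit exists in $[0,\infty]$ since the ess inf is monotone non-decreasing as the cylinder shrinks. The equality with the one-sided essliminf in~\eqref{def:u-star} is a direct comparison of parabolic and Euclidean past-neighborhoods: the family of backward parabolic cylinders is cofinal among sets of the form $\{(y,s): s<t,\ |(y,s)-(x,t)|<\delta\}$ as the scale tends to zero, so both expressions compute the same supremum of ess infs over shrinking past neighborhoods.

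For (iii), the easy inequality $u_* \leq u$ a.e.\ follows from Lebesgue's differentiation theorem: at a Lebesgue point $(x_0,t_0)$, the mean value of $u$ over $B_\rho(x_0) \times (t_0-\rho^2,t_0)$ converges to $u(x_0,t_0)$ and dominates the ess inf over the same cylinder. The reverse inequality $u_* \geq u$ a.e.\ is the PME-specific ingredient and requires an expansion-of-positivity / De~Giorgi-type estimate for weak supersolutions of~\eqref{evo_eqn}: such an estimate says that a lower bound on a large-measure subset of a forward cylinder propagates to a pointwise (a.e.) lower bound on a smaller cylinder, possibly with a small loss. Applied near a Lebesgue point $(x_0,t_0)$ at the level $u(x_0,t_0)-\eta$ and letting $\eta \to 0$, this forces $u_*(x_0,t_0) \geq u(x_0,t_0)$.

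For (ii), fix $(x_0,t_0) \in \Omega_T$ and $\lambda < u_*(x_0,t_0)$, and pick $\rho_0$ with $\essinf_{B_{\rho_0}(x_0) \times (t_0-\rho_0^2,t_0)} u > \lambda$. For points $(x,t)$ sufficiently close to $(x_0,t_0)$ with $t \leq t_0$, sufficiently small backward parabolic cylinders at $(x,t)$ lie inside $B_{\rho_0}(x_0) \times (t_0-\rho_0^2,t_0)$, giving $u_*(x,t) \geq \lambda$ at once. For $t > t_0$ one invokes forward propagation of positivity for weak supersolutions of~\eqref{evo_eqn}, which carries the lower bound $\lambda$ across the slice $\{t=t_0\}$ into a neighborhood above it; this again yields $u_*(x,t) \geq \lambda$. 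Letting $\lambda \uparrow u_*(x_0,t_0)$ produces the claim.

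The principal obstacle is the forward propagation of pointwise lower bounds for weak supersolutions, which underlies both (ii) and the nontrivial direction of (iii); this is precisely the machinery developed in~\cite{Naian}, and nothing in the plan above is specific to the singular range beyond its use as a black box.
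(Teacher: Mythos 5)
The paper does not prove this theorem at all: it is quoted verbatim from the reference \cite{Naian} (N.~Liao), so there is no in-paper argument to compare against. Judged on its own terms, your outline correctly reproduces the architecture of Liao's proof: define $u_*$ by the backward parabolic regularization, check that the two expressions in \eqref{def:u-star} agree because backward parabolic cylinders and Euclidean past half-neighborhoods are mutually cofinal, get $u_*\le u$ a.e.\ from differentiation of integrals, and obtain both $u_*\ge u$ a.e.\ and the lower semicontinuity at points approached from the future via De~Giorgi-type ``measure-to-pointwise'' and forward-propagation estimates for supersolutions. Your identification of the forward-in-time propagation as the single nontrivial ingredient (needed twice) is exactly right, and your observation that lower semicontinuity would be automatic for the two-sided regularization but not for the one-sided one is the correct reason this is a theorem about supersolutions rather than about measurable functions.

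Two caveats. First, the proposal is a plan rather than a proof: all of the analytic content sits inside the black-boxed De~Giorgi lemmas for the PME (which for \eqref{evo_eqn} must be run on the level of $u^m$ and do depend quantitatively on $m$, though they are available for every $m>0$, consistent with the hypothesis of the theorem). Since the paper itself defers entirely to \cite{Naian}, this is acceptable here, but it would not stand alone. Second, in the easy direction $u_*\le u$ a.e., the averages are taken over the backward cylinders $B_\rho(x_0)\times(t_0-\rho^2,t_0)$, which do \emph{not} form a regular family for the Euclidean metric on $\R^{n+1}$ (their measure is $O(\rho^{n+2})$ inside a ball of measure $O(\rho^{n+1})$); you should invoke the Lebesgue differentiation theorem with respect to the parabolic metric, for which these half-cylinders are a regular family, or argue via density points of superlevel sets. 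This is a standard fix, not a genuine gap.
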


The following Caccioppoli inequality holds for bounded weak supersolutions,
see~\cite[Lemma 2.15]{KinnunenLindqvist_crelle}.

\begin{lemma} \label{l.bounded_caccioppoli}
Suppose that $m>0$ and $u \leq M$ is a weak supersolution in $\Omega_T$. Then, there exists a numerical constant $c > 0$ such that
$$
\int_{0}^{T}\int_{\Omega} \xi^2 \left| \nabla u^m  \right|^2 \, \d x \d t \leq  c M^{2m} T \int_\Omega \left| \nabla \xi \right|^2 \, \d x + c M^{m+1} \int_\Omega \xi^2 \, \d x
$$
for every $\xi = \xi(x) \in C_0^\infty (\Omega)$ with $\xi \geq 0$.
\end{lemma}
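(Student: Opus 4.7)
The plan is to test the weak supersolution inequality \eqref{eq:wsol} (with $\ge$) against
$$\varphi := \xi^2\bigl(M^m-u^m\bigr)\,\tau(t),$$
where $\tau\in C_c^\infty(0,T)$ is a time cutoff approximating $\chi_{(0,T)}$. Since $u\le M$ and $m>0$ give $M^m-u^m\ge 0$, $\varphi$ is nonnegative, and the product of cutoffs ensures compact support in $\Omega_T$. The only technical delicacy is the low regularity of $u^m$ in time; as is standard for the PME, this is remedied by first replacing $u$ by its Steklov average $[u]_h$ in the test function, carrying out the computation below with the resulting smooth-in-time approximations, and sending $h\to0$. I carry out the manipulations formally.

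Plugging $\varphi$ in, expanding $\nabla\varphi$ and $\partial_t\varphi$, and rearranging yields
$$
\iint_{\Omega_T}\xi^2\tau|\nabla u^m|^2\,\d x\d t
\;\le\; 2\iint_{\Omega_T}\xi(M^m-u^m)\tau\,\nabla u^m\cdot\nabla\xi\,\d x\d t
\;+\;\mathrm{(II)},
$$
where $\mathrm{(II)}:=\iint_{\Omega_T}\xi^2\bigl(u\tau\,\partial_t u^m-u(M^m-u^m)\tau'\bigr)\,\d x\d t$. The gradient cross term is dealt with by Young's inequality together with the bound $(M^m-u^m)^2\le M^{2m}$: half of the gradient is absorbed into the left-hand side, leaving an error of size $cM^{2m}\bigl(\int_0^T\tau\,\d t\bigr)\int_\Omega|\nabla\xi|^2\,\d x\le cM^{2m}T\int_\Omega|\nabla\xi|^2\,\d x$, which matches the first term on the right-hand side of the claim.

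The substantive step is to bound $\mathrm{(II)}$. Using the algebraic identity $u\,\partial_t u^m=\tfrac{m}{m+1}\partial_t u^{m+1}$ and integrating by parts in $t$, the vanishing $\tau(0)=\tau(T)=0$ kills all boundary contributions and the two pieces in $\mathrm{(II)}$ combine into
$-\iint_{\Omega_T}\xi^2 F(u)\,\tau'\,\d x\d t$ with $F(s):=M^m s-\tfrac{1}{m+1}s^{m+1}$. On $[0,M]$ the function $F$ is nondecreasing with $0\le F(s)\le F(M)=\tfrac{m}{m+1}M^{m+1}$. Choosing $\tau_\eta$ piecewise-linear, equal to $1$ on $[\eta,T-\eta]$ and with $\int_0^T|\tau_\eta'|\,\d t=2$, one obtains
$|\mathrm{(II)}|\le cM^{m+1}\int_\Omega\xi^2\,\d x$
uniformly in $\eta$. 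Letting $\eta\to 0$ and applying monotone convergence on the left yields the claimed Caccioppoli inequality.

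The only real obstacle is the rigorous handling of the time derivative, which is precisely what the Steklov averaging is designed for, and which is the standard technical point behind Caccioppoli estimates for the PME (cf.~\cite{KinnunenLindqvist_crelle}). Beyond this routine bookkeeping I do not expect any genuine difficulty.
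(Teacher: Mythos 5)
The paper does not prove this lemma at all --- it is quoted verbatim from \cite{KinnunenLindqvist_crelle} (Lemma 2.15 there) --- so there is no internal proof to compare against. Your formal computation is exactly the standard argument behind that citation: the nonnegative test function $\xi^2(M^m-u^m)\tau$, Young's inequality with $(M^m-u^m)^2\le M^{2m}$ to absorb the cross term, and the primitive $F(s)=M^ms-\tfrac1{m+1}s^{m+1}$ of $s\mapsto M^m-s^m$, which satisfies $0\le F\le\tfrac{m}{m+1}M^{m+1}$ on $[0,M]$ and turns the parabolic contribution into $-\iint\xi^2\tau'F(u)\,\d x\d t$. The constants come out as claimed, and the structure of the argument is correct.

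The one claim I would not let stand is that the time regularization is ``routine bookkeeping.'' If you regularize by replacing $u^m$ inside the test function with $\mollifytime{u^m}{h}$, the parabolic term you must control is $+\iint u\xi^2\tau\,\partial_t\mollifytime{u^m}{h}\,\d x\d t$, and you need an \emph{upper} bound for it. The available pointwise estimate --- the paper's \eqref{time-deriv-moll}, $u\,\partial_t\mollifytime{u^m}{h}\ge\tfrac{m}{m+1}\partial_t\mollifytime{u^m}{h}^{\frac{m+1}{m}}$ --- goes the wrong way: the discrepancy is $\tfrac1h\big(u-\mollifytime{u^m}{h}^{1/m}\big)\big(u^m-\mollifytime{u^m}{h}\big)\ge0$, it sits with a $+$ sign on the right-hand side, and it is of order $h^{-1}\|u^m-\mollifytime{u^m}{h}\|^2$, which does not vanish as $h\to0$ without time regularity of $u^m$. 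So the naive mollification of the comparison function does not close the estimate. The fix used in \cite{KinnunenLindqvist_crelle} is to mollify the \emph{formulation} instead: testing \eqref{eq:wsol} with the time-reversed mollification of $\varphi$ and applying Fubini yields $\iint\big(\partial_t\mollifytime{u}{h}\,\varphi+\nabla\mollifytime{u^m}{h}\cdot\nabla\varphi\big)\,\d x\d t\ge0$ for nonnegative $\varphi$, so that the time derivative acts on $\mollifytime{u}{h}$ and the term $(M^m-\cdot)\,\partial_t\mollifytime{u}{h}$ can be rewritten via the primitive $F$ evaluated at the mollified function, with errors that are controllable. With that adjustment (or by simply citing the reference, as the paper does) your proof is complete; without it, the final technical step genuinely fails rather than being a formality.
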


Then we recall the definition of super- and subcaloric functions.

\begin{definition} \label{d.supercal}
We call $u \colon \Omega_T \to [0,\infty]$ a supercaloric function, if the following properties hold:
\begin{itemize}
\item[(i)] $u$ is lower semicontinuous,
\item[(ii)] $u$ is finite in a dense subset,
\item[(iii)] $u$ satisfies the comparison principle in every subcylinder $Q_{t_1,t_2}=Q \times (t_1,t_2) \Subset \Omega_T$: if $h\in C(\overline{Q}_{t_1,t_2})$ is a weak solution in $Q_{t_1,t_2}$ and if $h \leq u$ on the parabolic boundary of $Q_{t_1,t_2}$, then $ h\leq u$ in $Q_{t_1,t_2}$.
\end{itemize}

A function $v: \Omega_T \to [0,\infty)$ is called subcaloric function
if the conditions (i), (ii) and (iii) above hold with (i) replaced by
upper semicontinuity, and the inequalities in (iii) by $\geq$.
\end{definition}

\begin{remark}
In~\cite{MSch} the authors considered so called quasi-super(sub)caloric functions, for which the comparison principle (iii) in Definition~\ref{d.supercal} is required only in $C^{2,\alpha}$-subcylinders of $\Omega_T$. It was shown that the classes of quasi-super(sub)caloric and super(sub)caloric functions coincide, which implies that it is sufficient to require that the comparison principle in item (iii) holds in all $C^{2,\alpha}$-subcylinders.
\end{remark}

Supercaloric functions are closed under increasing limits provided that the limit function is finite in a dense set, see~\cite[Proposition 4.6]{Bjorns_boundary}.

\begin{lemma} \label{l.superc_increasing_lim}
Let $m>0$ and $u_k$ be a nondecreasing sequence of supercaloric functions in $\Omega_T$. If $u := \lim_{k\to \infty} u_k$ is finite in a dense subset of $\Omega_T$, then $u$ is supercaloric in $\Omega_T$.
\end{lemma}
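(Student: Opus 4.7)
The plan is to verify the three defining properties of a supercaloric function in Definition~\ref{d.supercal} for $u$. Property (ii) is hypothesized, and lower semicontinuity (i) is immediate from $\{u > \lambda\} = \bigcup_k \{u_k > \lambda\}$, which displays each superlevel set of $u$ as an increasing union of open sets.

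For the comparison principle (iii), fix a subcylinder $Q_{t_1,t_2} \Subset \Omega_T$ which, thanks to the Remark following Definition~\ref{d.supercal}, may be assumed to be a $C^{2,\alpha}$-cylinder so that Theorem~\ref{t.existence} applies. Let $h \in C(\overline{Q}_{t_1,t_2})$ be a weak solution with $h \leq u$ on $\partial_p Q_{t_1,t_2}$. The first step is to upgrade the pointwise convergence $u_k \nearrow u$ to a one-sided uniform statement on the compact parabolic boundary: for every $\epsilon > 0$, the sets $\{u_k > h - \epsilon\}$ are open (since $u_k - h$ is lower semicontinuous) and increase in $k$, and they cover $\partial_p Q_{t_1,t_2}$ because $u \geq h$ there. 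Compactness together with monotonicity yield some $k_0$ with $u_{k_0} > h - \epsilon$ on $\partial_p Q_{t_1,t_2}$.

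Next, I would use Theorem~\ref{t.existence} to produce the continuous global weak solution $\tilde h_\epsilon$ on $Q_{t_1,t_2}$ with parabolic boundary values $(h - \epsilon)_+$. The comparison part of Theorem~\ref{t.existence} gives $\tilde h_\epsilon \leq h$, while on $\partial_p Q_{t_1,t_2}$ one has $\tilde h_\epsilon = (h-\epsilon)_+ \leq u_{k_0}$ (using $(h-\epsilon)_+ \leq h - \epsilon < u_{k_0}$ where $h \geq \epsilon$ and $(h-\epsilon)_+ = 0 \leq u_{k_0}$ otherwise). Applying the comparison property of the supercaloric $u_{k_0}$ to the continuous weak solution $\tilde h_\epsilon$ gives $\tilde h_\epsilon \leq u_{k_0} \leq u$ throughout $Q_{t_1,t_2}$. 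Sending $\epsilon \to 0$, the family $\tilde h_\epsilon$ is nondecreasing (by comparison), bounded above by $h$, and its boundary data converge uniformly to $h$, so its monotone limit is a continuous weak solution with boundary values $h$, hence equal to $h$ by uniqueness in Theorem~\ref{t.existence}. This yields $h \leq u$ in $Q_{t_1,t_2}$.

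The main obstacle I anticipate is verifying the regularity of the truncated boundary data: the hypotheses of Theorem~\ref{t.existence} demand $g^m \in L^2(0,T;H^1(\Omega))$ with $\partial_t g^m \in L^{(m+1)/m}$, which for $g = (h-\epsilon)_+$ is not obvious and may require an auxiliary time-mollification of $h$ prior to truncation, together with the energy estimate in Lemma~\ref{l.bounded_caccioppoli} to control $h^m$ in the relevant spaces. Once $\tilde h_\epsilon$ is secured, the stability $\tilde h_\epsilon \nearrow h$ follows from continuous dependence of weak solutions on uniformly convergent boundary data, which is itself a consequence of the comparison and uniqueness provided by Theorem~\ref{t.existence}.
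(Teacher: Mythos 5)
The paper itself does not prove this lemma; it is imported from \cite[Proposition 4.6]{Bjorns_boundary}, so your argument has to stand on its own. Properties (i) and (ii), and the Dini-type compactness step producing some $k_0$ with $u_{k_0}>h-\eps$ on $\partial_p Q_{t_1,t_2}$, are correct, and the strategy of approximating $h$ from below by genuine weak solutions dominated by $u_{k_0}$ on the parabolic boundary is the right one.

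The genuine gap is the final stability step. You claim $\tilde h_\eps\to h$ by ``continuous dependence of weak solutions on uniformly convergent boundary data, which is itself a consequence of the comparison and uniqueness provided by Theorem~\ref{t.existence}.'' For the porous medium equation this deduction is not available: the comparison principle preserves order but does not convert an $L^\infty$-perturbation of the boundary data into an $L^\infty$-perturbation of the solution, precisely because the equation is not invariant under addition of constants --- the very obstruction the introduction of this paper dwells on. What comparison does give cheaply is that $\tilde h_\eps$ increases to some bounded weak solution $H\le h$ in $Q_{t_1,t_2}$ (monotone, uniformly bounded limits of weak solutions are weak solutions, as used in the proof of Proposition~\ref{p.balayage-supercal}); but identifying $H$ with $h$ requires showing that $H$ attains the boundary values $h$ continuously, which needs barrier or boundary-regularity arguments on the smooth cylinder, not merely comparison and uniqueness. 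A second, acknowledged but unresolved, problem is the admissibility of the data $(h-\eps)_+$ in Theorem~\ref{t.existence}: on $\{h>\eps\}$ one computes $\nabla\big((h-\eps)_+^m\big)=\big(\tfrac{h-\eps}{h}\big)^{m-1}\nabla h^m$, and for $0<m<1$ the factor $\big(\tfrac{h-\eps}{h}\big)^{m-1}$ blows up as $h\downarrow\eps$, so $((h-\eps)_+)^m\in L^2(H^1)$ does not follow from $h^m\in L^2_{\loc}(H^1_{\loc})$; moreover $\partial_t h^m$ is in general only a distribution, so a time-mollification does not obviously produce data satisfying $\partial_t g^m\in L^{\frac{m+1}{m}}$ while remaining below $u_{k_0}$ on the boundary and converging to $h$. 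Both points require actual arguments (or a different construction of the approximating solutions) before the proof of property (iii) is complete.
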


Furthermore, we can extend supercaloric functions by zero to the past, see~\cite{MSch}.

\begin{lemma} \label{l.zero-past-extension}
Let $0<m<1$ and $v : \Omega_T \to [0,\infty]$ be a supercaloric function in $\Omega_T$. Then
\[ u =
\begin{cases}
v\quad &\text {in } \Omega \times (0,T), \\
0\quad &\text {in } \Omega \times (-\infty,0],
\end{cases}
\]
is a supercaloric function in $\Omega \times (-\infty,T)$.
\end{lemma}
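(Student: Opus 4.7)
The plan is to verify the three defining conditions of Definition~\ref{d.supercal} for $u$ on the enlarged domain $\Omega \times (-\infty, T)$. Properties (i) and (ii) are essentially immediate: $u\equiv 0$ is continuous on $\Omega \times (-\infty, 0)$, $v$ is lower semicontinuous on $\Omega_T$ by assumption, and the only nontrivial check for (i) is lower semicontinuity at interface points $(x_0, 0)$, which follows from $u(x_0, 0) = 0$ together with $u \geq 0$. For (ii), the finiteness set of $u$ contains $\Omega \times (-\infty, 0]$ together with the dense finiteness set of $v$ in $\Omega_T$, and so is dense in the enlarged domain.

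The substance is (iii), the comparison principle on subcylinders $Q_{t_1,t_2} \Subset \Omega \times (-\infty, T)$. Rather than verifying it case-by-case---which would be delicate whenever $Q_{t_1,t_2}$ straddles $t = 0$, since such cylinders fail to be compactly contained in $\Omega_T$---I would exhibit $u$ as a pointwise increasing limit of simpler supercaloric functions and invoke Lemma~\ref{l.superc_increasing_lim}. Set $v_k := \min(v, k)$ on $\Omega_T$ and define
\[
u_k := \begin{cases} v_k & \text{in } \Omega \times (0, T), \\ 0 & \text{in } \Omega \times (-\infty, 0]. \end{cases}
\]
Each $v_k$ is a bounded supercaloric function on $\Omega_T$ (as the minimum of a supercaloric function and a constant---the comparison property of the minimum follows from the comparison for each factor) and in particular a locally bounded weak supersolution on $\Omega_T$, with $v_k^m \in \L^2_{\loc}(0,T;\mathrm{H}^1_{\loc}(\Omega))$ by the Caccioppoli estimate in Lemma~\ref{l.bounded_caccioppoli}. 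Moreover $u_k \nearrow u$ pointwise on $\Omega \times (-\infty, T)$.

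The main step is to verify that each $u_k$ is itself a weak supersolution on the enlarged domain. Given $\varphi \in C^\infty_0(\Omega \times (-\infty, T))$ with $\varphi \geq 0$, pick a smooth time cutoff $\eta_\eps \in C^\infty(\R)$ with $\eta_\eps \equiv 0$ on $(-\infty, \eps/2]$, $\eta_\eps \equiv 1$ on $[\eps, \infty)$, and $0 \leq \eta_\eps' \leq C/\eps$. For $\eps$ small enough, $\varphi \eta_\eps \in C^\infty_0(\Omega_T)$, and testing the supersolution inequality for $v_k$ against this function yields
\[
\iint_{\Omega_T} \eta_\eps \bigl( -v_k \partial_t \varphi + \nabla v_k^m \cdot \nabla \varphi \bigr) \,\d x\d t
\geq \iint_{\Omega_T} v_k \varphi\, \eta_\eps'(t) \,\d x\d t \geq 0.
\]
Letting $\eps \to 0$, the left-hand side converges by dominated convergence to $\iint_{\Omega \times (-\infty, T)} \bigl( -u_k \partial_t \varphi + \nabla u_k^m \cdot \nabla \varphi \bigr)\,\d x\d t$, since $u_k$ and $\nabla u_k^m$ vanish for $t \leq 0$. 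Hence $u_k$ is a bounded weak supersolution on $\Omega \times (-\infty, T)$, lower semicontinuous by construction, and therefore supercaloric there. Lemma~\ref{l.superc_increasing_lim} applied to $u_k \nearrow u$ then yields the claim.

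The main obstacle is controlling the ``trace contribution'' $\iint v_k \varphi\, \eta_\eps' \,\d x\d t$ at the interface $t = 0$: crucially it remains nonnegative in the limit, which is precisely what makes the extension by the value $0$---rather than by a positive constant---compatible with the weak supersolution property, and where the sign hypothesis $v \geq 0$ is essential. A secondary technical point is justifying that the truncation $v_k = \min(v,k)$ is a weak supersolution with the required Sobolev regularity, which relies on the standard characterization of bounded supercaloric functions as weak supersolutions together with Lemma~\ref{l.bounded_caccioppoli}.
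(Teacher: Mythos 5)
Your argument is correct. Note that the paper does not prove this lemma itself but cites \cite{MSch} for it, so there is no internal proof to compare against; judged on its own terms, your route --- truncate to $v_k=\min(v,k)$, use that bounded supercaloric functions are weak supersolutions (Proposition~\ref{p.bdd-supercal-supersol}) with the global-in-time Caccioppoli bound of Lemma~\ref{l.bounded_caccioppoli}, show the zero extension $u_k$ is a weak supersolution on $\Omega\times(-\infty,T)$ via the time cutoff $\eta_\eps$ (where the sign of the interface term $\iint v_k\varphi\,\eta_\eps'\ge0$ is indeed the crux), and pass to the limit with Lemma~\ref{l.superc_increasing_lim} --- is a sound and essentially standard way to prove such pasting results, and it correctly sidesteps the genuine difficulty of verifying the comparison principle directly on cylinders straddling $t=0$ (which cannot be fixed by adding a small constant to $v$, since the PME is not invariant under that operation). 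The one step you should tighten is the passage from ``$u_k$ is a bounded, lower semicontinuous weak supersolution'' to ``$u_k$ is supercaloric'': Lemma~\ref{l.weasuper-is-supercal} only yields that the regularization $(u_k)_*$ from~\eqref{def:u-star} is supercaloric, and lower semicontinuity alone gives $u_k\le (u_k)_*$, not equality. Here equality does hold --- for $t\le0$ because $u_k\equiv0$ in the past, and for $t>0$ because $v_k$ is supercaloric in $\Omega_T$ so that $v_k=(v_k)_*$ by Theorem~\ref{t.supercal-essliminf} --- but this identification should be stated explicitly rather than inferred from lower semicontinuity.
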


In the
fast diffusion case, on a fixed time slice a weak supersolution is
either positive or zero everywhere provided that the domain is connected, see~\cite{MSch}.

\begin{lemma}\label{lem:alternatives}
  Let $0<m<1$ and assume that $u$ is a supercaloric function to the PME in $\Omega_T$,
  where $\Omega\subset\R^n$ is open and
  connected. Then, for any time $t\in(0,T)$ either
  $u$ is positive on the whole time slice $\Omega\times\{t\}$ or $u$
  vanishes on the whole time slice.
  In particular, the positivity set of $u$ can be written as the union
  of cylinders $\Omega\times\Lambda_i$ for at most countably many open intervals
  $\Lambda_i\subset(0,T)$. 
\end{lemma}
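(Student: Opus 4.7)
The plan is to split the conclusion into two parts: (a) at each fixed time $t_0\in(0,T)$ either $u(\cdot,t_0)>0$ everywhere on $\Omega$ or $u(\cdot,t_0)\equiv 0$ on $\Omega$; and (b) the set of times $t$ at which $u(\cdot,t)>0$ on the whole of $\Omega$ is open in $(0,T)$. Granted~(a), the second assertion follows quickly from lower semicontinuity of $u$, so the real content is in~(a). This is where the fast diffusion assumption $0<m<1$ enters essentially: it is the range in which the PME propagates positivity instantaneously in space, ruling out nontrivial spatial dead cores on a single time slice.

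To prove~(a), fix $t_0\in(0,T)$ and set $P:=\{x\in\Omega\colon u(x,t_0)>0\}$. Lower semicontinuity from Definition~\ref{d.supercal}~(i) makes $P$ open in $\Omega$, so by connectedness it suffices to show that $P$ is also relatively closed. I would argue by contradiction: assume $x_*\in\Omega\cap\partial P$, so that $u(x_*,t_0)=0$ while $P$ accumulates at $x_*$. Pick $x_0'\in P$ close enough that some radius $R>0$ gives $x_*\in B_R(x_0')\Subset\Omega$, and use lower semicontinuity at $(x_0',t_0)$ to produce $c>0$, $\rho\in(0,R)$ and $\tau,\tau'>0$ with $[t_0-\tau,t_0+\tau']\subset(0,T)$ and
\[
u\geq c\quad\text{on}\quad\overline{B_\rho(x_0')}\times[t_0-\tau,t_0+\tau'].
\]
Invoking Theorem~\ref{t.existence} on the smooth cylinder $Q:=B_R(x_0')\times(t_0-\tau,t_0+\tau')$, I would produce a continuous weak solution $h\in C(\overline Q)$ whose lateral boundary value vanishes and whose initial datum at time $t_0-\tau$ is a continuous bump $g_0$ with $0\leq g_0\leq c$, $\spt g_0\Subset B_\rho(x_0')$ and $g_0\not\equiv 0$. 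By construction $h\leq u$ on the parabolic boundary of $Q$, so the comparison property in Definition~\ref{d.supercal}~(iii) gives $h\leq u$ throughout $Q$, and in particular $u(x_*,t_0)\geq h(x_*,t_0)$. The fast diffusion regime now provides the strong positivity of $h$: a comparison from below with a suitably scaled Barenblatt-type subsolution, which for $0<m<1$ is strictly positive on all of $\R^n$ at every positive time, yields $h(x_*,t_0)>0$ and hence $u(x_*,t_0)>0$, contradicting $x_*\notin P$.

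For~(b), set $Z:=\{t\in(0,T)\colon u(\cdot,t)\equiv 0\text{ on }\Omega\}$. If $t_n\to t$ with $t_n\in Z$, lower semicontinuity of $u$ gives $u(x,t)\leq\liminf_{n\to\infty} u(x,t_n)=0$ for every $x\in\Omega$, so $Z$ is relatively closed in $(0,T)$. By~(a), the complement $(0,T)\setminus Z$ is exactly the set of $t$ at which $\Omega\times\{t\}$ lies in the positivity set of $u$; being open in $(0,T)$, it decomposes into an at most countable disjoint union of open intervals $\Lambda_i$, which is the claimed cylindrical decomposition.

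The main obstacle is the strong positivity of the auxiliary solution $h$. This is the one point where the assumption $0<m<1$ cannot be circumvented by an abstract maximum principle: I expect to need an explicit Barenblatt profile positive on $\R^n$, with parameters tuned so that its initial trace is dominated by $g_0$ on $B_\rho(x_0')$ and its lateral values stay below those of $h$. A mild additional book-keeping issue, easily handled by openness of $\Omega$, is the reduction of the accumulation argument at an arbitrary boundary point of $P$ to a pair $(x_0',x_*)$ sitting in a common ball $B_R(x_0')\Subset\Omega$.
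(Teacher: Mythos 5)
The paper itself offers no proof of this lemma --- it is imported verbatim from \cite{MSch} --- so your argument has to stand on its own. Its topological skeleton is sound: openness of $P=\{x\in\Omega\colon u(x,t_0)>0\}$ from lower semicontinuity, relative closedness plus connectedness to obtain the dichotomy on each slice, and closedness of the set of vanishing times (with lower semicontinuity applied in the correct direction) to obtain the decomposition into countably many open intervals. The construction of the auxiliary solution $h$ via Theorem~\ref{t.existence} and the comparison $h\le u$ via Definition~\ref{d.supercal}\,(iii) are also set up correctly: $g_0\le c\le u$ on $\overline{B_\rho(x_0')}\times\{t_0-\tau\}$ and $g_0=0\le u$ elsewhere on the parabolic boundary.

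The gap is precisely at the step you identify as the crux, and your proposed tool cannot close it. For $0<m<1$ the Barenblatt profile is strictly positive on all of $\R^n$ at every positive time; consequently no translate or rescaling of it can lie below $h$ on the parabolic boundary of $Q=B_R(x_0')\times(t_0-\tau,t_0+\tau')$, since $h$ vanishes identically on the lateral boundary and its initial datum $g_0$ vanishes outside $B_\rho(x_0')$. The inequality you need on $\partial_pQ$ is therefore false at the outset, and the comparison principle yields nothing --- this is exactly dual to the slow-diffusion case, where the compact support of the Barenblatt profile is what makes such barriers usable. Two further points are left untreated: $h$ solves a zero Dirichlet problem for fast diffusion and may extinguish in finite time, so you must arrange $\tau$ below the extinction time of $h$ before evaluating at $t_0$; and the natural replacement barrier vanishing on $\partial B_R$, the separable solution $(T_*-t)_+^{1/(1-m)}S(x)$ with $-\Delta S^m=\tfrac{1}{1-m}S$, exists on balls only for $m$ above a dimensional threshold, so the subcritical range $0<m\le(n-2)/n$, which the lemma is explicitly meant to cover, is not reached by these explicit solutions either. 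The instantaneous spatial propagation of positivity across a fixed time slice is the actual mathematical content of the lemma; as written, your argument assumes it rather than proves it, and establishing it requires the quantitative positivity results for singular equations underlying \cite{MSch} rather than a Barenblatt barrier.
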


We also have the following connections of supercaloric functions and weak supersolutions, see~\cite{MSch}.

\begin{proposition} \label{p.bdd-supercal-supersol}
Let $0<m<1$ and suppose that $u$ is a locally bounded supercaloric function to the porous medium equation in $\Omega_T$,
  where $\Omega\subset\R^n$ is an open set. Then, $u$ is a weak supersolution.
\end{proposition}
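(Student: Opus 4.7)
The plan is to realize $u$ as a pointwise monotone limit of bounded continuous weak solutions on smooth subcylinders, extract uniform gradient estimates via the Caccioppoli inequality (Lemma~\ref{l.bounded_caccioppoli}), and pass to the limit in the weak formulation of the PME. Since the conclusion is local, I would fix a $C^{1,\alpha}$-subcylinder $Q_{t_1,t_2} = Q \times (t_1,t_2) \Subset \Omega_T$ on which $u \leq M$, and work throughout inside $Q_{t_1,t_2}$.

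For the approximation, I would choose a non-decreasing sequence of boundary data $g_j$ on $\partial_p Q_{t_1,t_2}$ converging pointwise upward to $u_*|_{\partial_p Q_{t_1,t_2}}$, where $u_*$ is the lsc representative from Theorem~\ref{t.super_lsc}, with each $g_j$ satisfying the regularity hypotheses of Theorem~\ref{t.existence}. That theorem then supplies unique continuous global weak solutions $u_j$ with $u_j = g_j$ on $\partial_p Q_{t_1,t_2}$, monotone non-decreasing by uniqueness together with the comparison in Theorem~\ref{t.existence}, and bounded by $M$. The supercaloric comparison principle (Definition~\ref{d.supercal}(iii)) forces $u_j \leq u$ in $Q_{t_1,t_2}$, so the pointwise limit $\bar u := \lim_j u_j$ satisfies $\bar u \leq u \leq M$. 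Applying Lemma~\ref{l.bounded_caccioppoli} to each $u_j$ gives uniform $L^2$ bounds on $\nabla u_j^m$ over compact subsets of $Q_{t_1,t_2}$; along a subsequence $u_j^m \rightharpoonup \bar u^m$ weakly in $L^2_t H^1_{x,\mathrm{loc}}$ (the limit being identified via the monotone a.e. convergence), and dominated convergence yields $u_j \to \bar u$ strongly in $L^p_{\mathrm{loc}}$ for every $p < \infty$.

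The main obstacle is to translate the passage to the limit into the desired supersolution inequality for $u$ itself. Passing to the limit in the weak equation satisfied by $u_j$ shows only that $\bar u$ itself is a weak solution, and in general $\bar u \lneq u$, since a bounded supercaloric function need not be a weak solution — for instance, $u(x,t)=f(t)$ with $f$ non-decreasing but non-constant is supercaloric and merely a supersolution. The remedy uses the fast-diffusion structure given by Lemma~\ref{lem:alternatives}: outside the slabs $Q\times\Lambda_i$ forming the positivity set, $u\equiv 0$ and the supersolution inequality is trivial; within each slab, lower semicontinuity combined with strict positivity furnishes a uniform positive lower bound on compact subsets, placing the PME in the uniformly parabolic regime. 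In that regime one can work with an inf-convolution regularization $u_\epsilon(x,t)=\inf_{s}\{u(x,s)+(t-s)^2/\epsilon\}$, which inherits the supercaloric comparison from $u$, acquires Lipschitz regularity in time, and may be verified to satisfy the weak supersolution inequality directly using the Lipschitz bound and testing with smooth nonnegative $\varphi$; passing $\epsilon\to0$ using the uniform Caccioppoli bound then yields $u^m \in L^2_{\mathrm{loc}}(H^1_{\mathrm{loc}})$ and the supersolution inequality for $u$ on $Q_{t_1,t_2}$. Arbitrariness of the subcylinder completes the proof.
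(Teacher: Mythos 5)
First, a remark on the target: the paper does not prove Proposition~\ref{p.bdd-supercal-supersol} at all --- it is quoted from the reference [MSch], so your attempt can only be measured against the standard argument in the literature (Kinnunen--Lindqvist for the degenerate case, [MSch] for the fast diffusion case). That argument approximates $u$ \emph{from below by continuous obstacles} $\psi_j\nearrow u$ (possible since $u$ is lower semicontinuous), solves the obstacle problem with obstacle $\psi_j$ in a subcylinder to produce genuine weak \emph{super}solutions $v_j$ with $\psi_j\le v_j\le u$ (the upper bound coming from the fact that $v_j$ solves the equation off the contact set, where one can compare with $u$ via Definition~\ref{d.supercal}(iii)), and then concludes $v_j\nearrow u$, applies Lemma~\ref{l.bounded_caccioppoli} uniformly, and passes to the limit. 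Your construction replaces the obstacle problem by the boundary value problem of Theorem~\ref{t.existence}, and, as you correctly diagnose yourself, this produces the Poisson modification $\bar u$ of $u$, which is a weak solution and in general strictly below $u$; your example $u=f(t)$ is exactly right. So the first two thirds of your proposal do not prove the statement, and you know it.

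The genuine gap is in the proposed remedy. Reducing to the uniformly parabolic regime via Lemma~\ref{lem:alternatives} and lower semicontinuity is fine as far as it goes ($u$ attains a positive infimum on compact subsets of each slab $Q\times\Lambda_i$), but the inf-convolution step is not a proof: the assertion that $u_\varepsilon(x,t)=\inf_s\{u(x,s)+(t-s)^2/\varepsilon\}$ ``may be verified to satisfy the weak supersolution inequality directly'' is essentially the proposition itself in disguise. The only information you have about $u$ is the comparison property of Definition~\ref{d.supercal}(iii); converting a comparison-principle (or viscosity-type) supersolution into a distributional one is precisely the content of the result, and time-Lipschitz regularity of $u_\varepsilon$ gives you no control whatsoever on $\nabla u_\varepsilon^m$, which must lie in $L^2_{\mathrm{loc}}$ before the weak inequality can even be written down. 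Likewise, you invoke ``the uniform Caccioppoli bound'' for the family $u_\varepsilon$, but Lemma~\ref{l.bounded_caccioppoli} applies only to functions already known to be weak supersolutions --- which is what is to be shown. To repair the proof along standard lines, keep your monotone approximation and compactness framework but route it through the obstacle problem (or a Poisson modification outside the contact set with a continuous minorant of $u$), so that the approximating functions are weak supersolutions squeezed between $\psi_j$ and $u$ rather than weak solutions converging to something strictly smaller.
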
 

In connection to the next result, see Theorem~\ref{t.super_lsc}. For the proof we refer to~\cite{MSch}.
\begin{lemma} \label{l.weasuper-is-supercal}
Let $0<m<1$ and suppose that $u$ is a weak supersolution in $\Omega_T$. Then, $u_*$ is a supercaloric function in $\Omega_T$.
\end{lemma}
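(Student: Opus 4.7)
I will verify the three conditions of Definition~\ref{d.supercal} for $u_*$. Property (i) is precisely the content of Theorem~\ref{t.super_lsc}. For (ii), the integrability assumption $u^m\in\L^{1/m}_{\loc}(\Omega_T)$ in Definition~\ref{d.weak_sol} forces $u\in\L^{1}_{\loc}(\Omega_T)$, so $u$ is finite almost everywhere; combined with $u_*=u$ a.e., this makes $\{u_*<\infty\}$ dense in $\Omega_T$.

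The substance of the lemma is property (iii). Fix a subcylinder $Q_{t_1,t_2}=Q\times(t_1,t_2)\Subset\Omega_T$, which by the remark after Definition~\ref{d.supercal} may be assumed to be of class $C^{2,\alpha}$, and let $h\in C(\overline{Q_{t_1,t_2}})$ be a weak solution in $Q_{t_1,t_2}$ with $h\leq u_*$ on $\partial_pQ_{t_1,t_2}$. The plan is to establish, for each $\delta>0$, the inequality $h\leq u+\delta$ almost everywhere in $Q_{t_1,t_2}$; sending $\delta\to 0$ and invoking the $\essliminf$-representation~\eqref{def:u-star} of $u_*$ then yields $h\leq u_*$ pointwise. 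Because $u_*$ is lower semicontinuous, $h$ is continuous, and $\partial_pQ_{t_1,t_2}$ is compact, the boundary inequality upgrades to $u_*>h-\delta$ on an open neighborhood $N\subset\overline{Q_{t_1,t_2}}$ of $\partial_pQ_{t_1,t_2}$, so that $(h-\delta-u)_+$ vanishes a.e.\ in $N$ and has essentially compact support away from the lateral boundary.

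The remaining comparison is then carried out by a standard Steklov-averaging argument. Denoting by $[\,\cdot\,]_\sigma$ the Steklov average in time, one tests the averaged supersolution inequality for $u$ and the averaged equation for $h$ against a product of a temporal cutoff and $\bigl([h^m]_\sigma-([u]_\sigma+\delta)^m\bigr)_+$, subtracts the two, and passes to the limit $\sigma\to 0$. The spatial integrand produces the nonnegative contribution $\bigl|\nabla(h^m-(u+\delta)^m)_+\bigr|^2$, while the parabolic term yields the difference between two time slices of an entropy-type integral $\int_Q\Phi(u+\delta,h)\,\d x$ with $\Phi$ nonnegative and vanishing precisely on $\{h\leq u+\delta\}$. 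The spatial support property takes care of the lateral boundary $\partial Q\times(t_1,t_2)$ automatically; the bottom slice $t=t_1$ is handled by combining $h(\cdot,t_1)\leq u_*(\cdot,t_1)=\essliminf_{s<t_1}u(\cdot,s)$ with the lower semicontinuity of $u_*$ and a short-time averaging of $u$ from above near $t=t_1$. The net inequality forces $\Phi(u+\delta,h)\equiv 0$ a.e., which gives $h\leq u+\delta$ a.e.\ in $Q_{t_1,t_2}$.

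The principal technical obstacle lies precisely in the correct handling of this entropy identity and of the bottom-slice contribution, since $u$ has no classical time derivative and its trace at $t=t_1$ is only captured in the weak past-$\essliminf$ sense of~\eqref{def:u-star}. Once these two delicate points are managed, the Steklov limit, the Caccioppoli-type bound from Lemma~\ref{l.bounded_caccioppoli}, and the concluding passage $\delta\to 0$ all go through by routine estimates.
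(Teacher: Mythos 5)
The paper does not actually prove this lemma---it is quoted from the reference [MSch]---so there is no internal argument to match; your proposal must therefore stand on its own, and it does not. Parts (i) and (ii) are fine: lower semicontinuity is Theorem~\ref{t.super_lsc}, and $u^m\in L^{1/m}_{\loc}(\Omega_T)$ gives $u\in L^1_{\loc}(\Omega_T)$, hence $u_*=u$ a.e.\ is finite a.e. The gap is in (iii), and it is exactly the obstruction the introduction of this paper dwells on: the porous medium equation is \emph{not} invariant under addition of constants, so $u+\delta$ is not a weak supersolution when $u$ is. Concretely, subtracting the weak formulations and testing with $\zeta\,\bigl(h^m-(u+\delta)^m\bigr)_+$, the spatial term is
\begin{equation*}
\iint \zeta\,\nabla\bigl(h^m-u^m\bigr)\cdot\nabla\bigl(h^m-(u+\delta)^m\bigr)_+\,\dx\dt
=\iint \zeta\,\bigl|\nabla\bigl(h^m-(u+\delta)^m\bigr)_+\bigr|^2\dx\dt
+\iint \zeta\,\nabla\bigl((u+\delta)^m-u^m\bigr)\cdot\nabla\bigl(h^m-(u+\delta)^m\bigr)_+\,\dx\dt,
\end{equation*}
and the second (cross) term has no sign and no small parameter to absorb it: $\nabla\bigl((u+\delta)^m-u^m\bigr)=\bigl(1-(u/(u+\delta))^{1-m}\bigr)\nabla u^m$ is not negligible on the set where $u$ is small, which for $0<m<1$ is precisely where the singularity of the diffusion lives. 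So the asserted ``nonnegative contribution $|\nabla(h^m-(u+\delta)^m)_+|^2$'' is simply not what the computation yields. This is why Avelin--Lukkari and Section~\ref{sec:connection} of this paper replace the additive shift by the multiplicative perturbation $u/(1+\eps)$, which stays in the solution class up to a controllable source term that must then be handled by a dual problem---none of which is ``routine.''

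A second, independent weak point is the bottom slice: $u_*(\cdot,t_1)$ is by \eqref{def:u-star} the $\essliminf$ of $u$ from times $s<t_1$, i.e.\ from \emph{outside} the cylinder $Q_{t_1,t_2}$, whereas your entropy inequality needs $\liminf_{s\downarrow t_1}\int_Q\Phi(u(\cdot,s)+\delta,h(\cdot,s))\,\dx$, i.e.\ information from times $s>t_1$. Passing from the one to the other requires a weak one-sided time-continuity statement for supersolutions which you acknowledge but do not supply, and which (together with the absence of a time derivative of $u$) is where the real work in [MSch] and in the Kinnunen--Lindqvist line of proofs is done, typically by reducing to a comparison of weak super- and subsolutions whose boundary data are attained in the Sobolev and $L^1$ sense on a slightly shrunken cylinder. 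As it stands, the proposal identifies the right target but the mechanism it proposes for the comparison principle fails at the gradient term and is incomplete at the initial slice.
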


For the proof of the next lemma we refer to~\cite{MSch}, see also~\cite[Theorem 3.6]{Bjorns_boundary} and~\cite[Theorem 3.3]{KLL}. 
\begin{lemma} \label{l.supersubcal-cylinder-comparison}
Let $m>0$ and $U_{t_1,t_2} \Subset \R^{n+1}$ be a cylinder. Suppose that $u$ is a supercaloric and $v$ is a subcaloric function in $U_{t_1,t_2}$. If
$$
\infty \neq \limsup_{U_{t_1,t_2} \ni (y,s) \to (x,t)} v(y,s) \leq \liminf_{U_{t_1,t_2} \ni (y,s) \to (x,t)} u(y,s)
$$
for every $(x,t) \in \partial_p U_{t_1,t_2}$, then $v \leq u$ in $U_{t_1,t_2}$.
\end{lemma}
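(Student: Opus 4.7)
The plan is to sandwich $v$ and $u$ between a pair of continuous weak solutions of the PME on a smooth exhaustion of $U_{t_1,t_2}$ from inside and then apply the comparison property built into Definition~\ref{d.supercal}(iii) to the subcaloric and supercaloric functions separately. Writing $U_{t_1,t_2} = U\times(t_1,t_2)$, I would fix a family of $C^{1,\alpha}$-subdomains $U^{(k)}\Subset U^{(k+1)}\Subset U$ with $\bigcup_k U^{(k)}=U$ and times $t_1^{(k)}\searrow t_1$, $t_2^{(k)}\nearrow t_2$, and set $V_k := U^{(k)}\times(t_1^{(k)},t_2^{(k)})$. Since $\bigcup_k V_k = U_{t_1,t_2}$, it suffices to prove $v \le u$ on every $V_k$.

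To convert the hypothesis on $\partial_p U_{t_1,t_2}$ into usable information on $\partial_p V_k$, I would use the upper semicontinuity of $v$, lower semicontinuity of $u$ and compactness of $\partial_p U_{t_1,t_2}$: a standard covering argument will yield, for every $\eps>0$, an open neighborhood $N_\eps$ of $\partial_p U_{t_1,t_2}$ in $U_{t_1,t_2}$ on which $v < u + \eps$. Taking $k$ large enough makes $\partial_p V_k \subset N_\eps$. On the compact $\partial_p V_k$, semicontinuity combined with inf-/sup-convolutions and subsequent mollification produces continuous approximants $\phi_\eps,\psi_\eps \in C(\overline{V_k})$ with the regularity required by Theorem~\ref{t.existence}, satisfying $v \le \phi_\eps$, $\psi_\eps \le u$ on $\partial_p V_k$ and $\|\phi_\eps-\psi_\eps\|_{L^\infty(\partial_p V_k)} \le C\eps$. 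Theorem~\ref{t.existence} then delivers continuous weak solutions $H_{\phi_\eps},H_{\psi_\eps}$ on $V_k$ with those boundary values, and Definition~\ref{d.supercal}(iii) applied to $v$ (subcaloric) and $u$ (supercaloric) yields
$$
v \le H_{\phi_\eps} \quad\text{and}\quad H_{\psi_\eps} \le u \quad\text{in } V_k.
$$

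The comparison part of Theorem~\ref{t.existence} together with $\phi_\eps \le \psi_\eps + C\eps$ on $\partial_p V_k$ gives $H_{\phi_\eps} \le H_{\psi_\eps + C\eps}$, and $L^\infty$-stability of continuous weak solutions of the PME with respect to boundary data furnishes a modulus $\omega(\eps)\to 0$ with $H_{\psi_\eps+C\eps}-H_{\psi_\eps} \le \omega(\eps)$ locally uniformly in $V_k$. Chaining these estimates gives $v \le u + \omega(\eps)$ in $V_k$, and sending $\eps\to 0$ completes the argument. I expect the main obstacle to be precisely this $L^\infty$-stability step: unlike the linear case, the PME is not invariant under addition of constants to the solution, so the small $\eps$-slack between $\phi_\eps$ and $\psi_\eps$ cannot be absorbed by a simple shift and must instead be handled through a quantitative dependence estimate or a compactness argument for continuous Dirichlet solutions to the PME.
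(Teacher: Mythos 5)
First, note that the paper does not prove this lemma at all: it is imported from the literature (the text preceding the statement refers to \cite{MSch}, \cite[Theorem 3.6]{Bjorns_boundary} and \cite[Theorem 3.3]{KLL}), so there is no in-paper proof to match your argument against. Your skeleton --- exhaustion by smooth subcylinders $V_k$, a covering/semicontinuity argument giving $v<u+\eps$ near $\partial_p U_{t_1,t_2}$, a Dini-type sandwich by continuous boundary data $\psi_\eps\le u$, $v\le\phi_\eps\le\psi_\eps+C\eps$ on $\partial_p V_k$, solvability of the Dirichlet problem via Theorem~\ref{t.existence}, and the comparison built into Definition~\ref{d.supercal}(iii) --- is indeed the standard opening of such proofs and is sound as far as it goes.

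The problem is the step you yourself flag as ``the main obstacle'': the claimed $L^\infty$-stability $H_{\psi_\eps+C\eps}-H_{\psi_\eps}\le\omega(\eps)$ is not a technicality to be deferred; it \emph{is} the mathematical content of the lemma, and as written your proof does not contain it. For the heat equation or the $p$-parabolic equation one absorbs the $\eps$-slack by adding a constant to the solution; for the PME this fails ($h+\eps$ is not a supersolution), which is exactly the difficulty the present paper's introduction dwells on. The known substitutes are all nontrivial: (i) multiplicative perturbation $h\mapsto(1+\delta)h$ (with a time rescaling or an error source term, as in \cite{AvLu} and in the proof of Theorem~\ref{t.weakvar-is-minimal}), which requires strict positivity of the supersolution on the relevant compact set and, in the fast diffusion range, the structure theorem for positivity sets (Lemma~\ref{lem:alternatives}); (ii) an $L^1$-contraction estimate for the Dirichlet problem combined with the $\essliminf$-characterizations to upgrade an a.e.\ inequality to a pointwise one; or (iii) a compactness argument relying on boundary regularity estimates with moduli depending only on the data (as in \cite{Abdulla1,Abdulla2,Bjorns_boundary}) --- and in your setup the latter would additionally require equicontinuity of the families $\{\phi_\eps\},\{\psi_\eps\}$, which your inf-/sup-convolution construction does not provide for free. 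None of these tools is set up in your argument, so the proof is incomplete precisely at the point where the PME departs from the linear model. To repair it, either prove the stability estimate (e.g.\ via an $L^1$-contraction plus Theorem~\ref{t.supercal-essliminf}), or restructure the endgame along the lines of one of (i)--(iii).
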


For supercaloric functions the following result holds true by~\cite{MSch}.

\begin{theorem} \label{t.supercal-essliminf}
Let $0 < m< 1$ and $u : \Omega_T \to [0,\infty]$ be a supercaloric function in $\Omega_T$. Then,
$$
u(x,t) = \essliminf_{\substack{(y,s) \to (x,t) \\ s<t}} u(y,s)\quad \text{for every } (x,t) \in \Omega_T.
$$
\end{theorem}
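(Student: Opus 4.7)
One inequality is immediate from lower semicontinuity: for every $(x,t)\in\Omega_T$,
\[
u(x,t)\;\leq\;\liminf_{(y,s)\to(x,t)} u(y,s)\;\leq\;\essliminf_{\substack{(y,s)\to(x,t)\\s<t}} u(y,s),
\]
since the essential infimum always dominates the infimum.

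For the reverse inequality, the plan is to reduce to the setting of bounded weak supersolutions by truncation. Set $u_k:=\min\{u,k\}$. Since the minimum of two supercaloric functions is supercaloric---property (iii) of Definition~\ref{d.supercal} is preserved under minima and constants are supercaloric---the function $u_k$ is a bounded lower semicontinuous supercaloric function. Proposition~\ref{p.bdd-supercal-supersol} then ensures that $u_k$ is a weak supersolution, and Theorem~\ref{t.super_lsc} produces the lsc representative
\[
(u_k)_*(x,t)\;:=\;\essliminf_{\substack{(y,s)\to(x,t)\\s<t}} u_k(y,s),
\]
which agrees with $u_k$ a.e.\ on $\Omega_T$ and, by Lemma~\ref{l.weasuper-is-supercal}, is itself supercaloric. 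Applying the easy direction to $u_k$ gives $u_k\leq (u_k)_*$ pointwise.

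The main step is then the pointwise equality $u_k=(u_k)_*$. Arguing by contradiction, suppose $u_k(x_0,t_0)<c<(u_k)_*(x_0,t_0)$ for some $(x_0,t_0)$ and some $c>0$. Since $(u_k)_*$ is lower semicontinuous, $(u_k)_*>c$ on a neighborhood of $(x_0,t_0)$, and hence $u_k>c$ a.e.\ there. The plan is then to compare $u_k$ with the stationary weak solution $h\equiv c$ on a suitably chosen subcylinder $Q$ containing $(x_0,t_0)$, using Definition~\ref{d.supercal}\,(iii) or Lemma~\ref{l.supersubcal-cylinder-comparison}, in order to force $u_k\geq c$ at $(x_0,t_0)$---contradicting the assumption. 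Upgrading the a.e.\ lower bound on the neighborhood to a pointwise bound on $\partial_pQ$ is the main technical difficulty; the expectation is that it can be carried out by a Fubini-type argument over a one-parameter family of cylinders (producing a good lateral face on which the bound holds surface-a.e.), combined with the lsc of $u_k$ and the rigid time-slice structure from Lemma~\ref{lem:alternatives}, which in the fast diffusion regime forces a supercaloric function to be either everywhere positive or everywhere zero on each time slice.

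Finally, let $k\to\infty$. Since $u_k\nearrow u$, monotone convergence of the essential infimum gives $(u_k)_*(x,t)\nearrow\essliminf_{s<t,\,(y,s)\to(x,t)} u(y,s)$ for every $(x,t)$, and combining this with $u_k=(u_k)_*$ and $u_k\nearrow u$ yields the claimed equality. The hardest part will be the pointwise upgrade in the previous paragraph, where the fast diffusion hypothesis (through the structure of positivity sets) is genuinely needed.
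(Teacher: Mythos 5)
The paper does not prove Theorem~\ref{t.supercal-essliminf} at all; it imports it from the reference [MSch], so there is no internal proof to compare with. Judged on its own terms, your reduction is sound up to the decisive step: the easy inequality from lower semicontinuity is correct, the truncation $u_k=\min\{u,k\}$ is a bounded supercaloric function (minima of supercaloric functions and constants are supercaloric), and Proposition~\ref{p.bdd-supercal-supersol} together with Theorem~\ref{t.super_lsc} correctly reduces everything to the pointwise identity $u_k=(u_k)_*$. But the step you flag as the ``main technical difficulty'' is a genuine gap, and the tools you propose for closing it cannot work. From $(u_k)_*>c$ near $(x_0,t_0)$ you only get $u_k>c$ \emph{a.e.}\ in a neighbourhood, and a Fubini argument over radii and initial times yields $u_k\ge c$ only up to null sets \emph{of the respective surface measures} on the lateral boundary and on the initial slice --- never at every point of $\partial_p Q$, which is what both Definition~\ref{d.supercal}\,(iii) and Lemma~\ref{l.supersubcal-cylinder-comparison} demand. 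Lower semicontinuity cannot upgrade ``$\ge c$ a.e.''\ to ``$\ge c$ everywhere'': it gives $u_k(z)\le\liminf_{w\to z}u_k(w)$, an upper bound at the point, and indeed an lsc function may dip below $c$ at the very point $(x_0,t_0)$ you are trying to control --- that is exactly the configuration you must exclude. Lemma~\ref{lem:alternatives} is of no help either: it concerns only the dichotomy between the zero set and the positivity set on time slices and says nothing about the level set $\{u_k\ge c\}$ for $c>0$.

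The way this step is actually closed (in [MSch], and in the analogous results of Kinnunen--Lindqvist for the $p$-parabolic case) is to abandon the comparison with the constant $c$ and instead exploit that the comparison principle \emph{is} available for continuous data lying pointwise below $u_k$: by lower semicontinuity write $u_k=\sup_j\psi_j$ on $\overline Q$ for an increasing sequence of Lipschitz functions $\psi_j$, solve the boundary value problem of Theorem~\ref{t.existence} with data $\psi_j$ to obtain solutions $h_j\le u_k$ in $Q$, and then show that the increasing limit $h=\lim_j h_j$ satisfies $h(x_0,t_0)\ge c$ because its boundary data exceed $c$ up to null sets and solutions attain their initial and lateral values in an integral ($L^1$-comparison) sense. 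That last assertion --- a comparison principle for solutions with merely a.e.-ordered boundary data --- is the real analytic content of the theorem and is precisely what is missing from your outline. As written, the proposal is an accurate reduction plus a correct identification of where the difficulty sits, but not a proof.
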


\begin{definition} For $v \in L^1(\Omega_T)$, $v_o \in L^1(\Omega)$ and $h>0$, we define a mollification in time by
\begin{equation} \label{eq:time-mollif}
  \mollifytime{v}{h}(x,t)
  :=
  \mollifytime{v}{h,v_o}(x,t)
	:= e^{-\frac{t}{h}}v_o(x) + \tfrac1h \int_0^t e^{\frac{s-t}{h}}
        v(x,s)\, \mathrm{d}s
 \end{equation}
 for $(x,t)\in \Omega\times[0,T]$.
\end{definition}

We state some useful properties of the mollification in the following lemma, see~\cite[Lemma 2.2]{KinnunenLindqvist2006}.

\begin{lemma} \label{lem:mollifier}
  Let $\mollifytime{v}{h}$ be defined as in~\eqref{eq:time-mollif}. Then the following properties hold:
  \begin{enumerate}[(i)]
  \itemsep2mm
  \item Let $p\ge 1$ and $X\in \{L^p(\Omega),W^{1,p}(\Omega),W^{1,p}_0(\Omega)\}$. If $v \in L^p(0,T;X)$ and $v_o \in X$,
    then $\mollifytime{v}{h}\in C([0,T];X)$ and $\mollifytime{v}{h}(\cdot,0)=v_o$.
    Furthermore $\mollifytime{v}{h} \xrightarrow{h\to 0} v$ in $L^p(0,T;X)$ .
\item If $v \in C(\overline{\Omega_T})$, $v_o = v(\cdot,0)$ and $\Omega \subset \R^n$ is a bounded set, then 
$$
\mollifytime{v}{h} \xrightarrow{h\to 0} v\quad \text{ uniformly in }  \Omega_T.
$$
\item If $v\in L^p(\Omega_T)$ for some $p\ge1$,
  the weak time derivative $\partial_t \mollifytime{v}{h}$ exists in
  $L^p(\Omega_T)$  and is given by the formula
\begin{equation*}
  \partial_t \mollifytime{v}{h} = \frac{1}{h} ( v - \mollifytime{v}{h} ).
\end{equation*}
\end{enumerate}
\end{lemma}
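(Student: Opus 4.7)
\textbf{Proof plan for Lemma~\ref{lem:mollifier}.} The strategy is to derive (iii) first by a direct computation, then use it together with the elementary identity $v(x,t)=e^{-t/h}v(x,t)+\tfrac1h\int_0^t e^{(s-t)/h}v(x,t)\,\mathrm{d}s$ (which follows from $\tfrac1h\int_0^t e^{(s-t)/h}\,\mathrm{d}s = 1-e^{-t/h}$) as the principal tool for items (i) and (ii).

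For (iii), I would first verify the formula for $v\in C^\infty(\overline{\Omega_T})$ by differentiating~\eqref{eq:time-mollif} under the integral sign, which yields
\[
  \partial_t\mollifytime{v}{h}
  = -\tfrac1h e^{-t/h}v_o + \tfrac1h v - \tfrac1{h^2}\int_0^t e^{(s-t)/h}v(x,s)\,\mathrm{d}s
  = \tfrac1h\bigl(v-\mollifytime{v}{h}\bigr).
\]
Then I would extend the identity to general $v\in L^p(\Omega_T)$ by a density argument: approximate $v$ by smooth functions $v_j$ and observe that the mapping $v\mapsto\mollifytime{v}{h}$ is linear and continuous from $L^p(\Omega_T)$ into itself with norm controlled by a constant depending only on $h$ and $T$ (by Young's inequality for convolutions applied to the convolution with $\tfrac1h e^{\cdot/h}\chi_{(-\infty,0)}$). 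Passing to the limit in the distributional sense gives the desired formula in $L^p(\Omega_T)$.

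For (i), the continuity $\mollifytime{v}{h}\in C([0,T];X)$ follows from (iii) via the fundamental theorem of calculus, since $\partial_t\mollifytime{v}{h}\in L^p(0,T;X)$ implies absolute continuity of $t\mapsto\mollifytime{v}{h}(\cdot,t)$ with values in $X$; evaluation at $t=0$ in~\eqref{eq:time-mollif} gives $\mollifytime{v}{h}(\cdot,0)=v_o$. For the convergence $\mollifytime{v}{h}\to v$ in $L^p(0,T;X)$, I would use the identity mentioned above to write
\[
  \mollifytime{v}{h}(x,t)-v(x,t)
  = e^{-t/h}\bigl(v_o(x)-v(x,t)\bigr) + \tfrac1h\int_0^t e^{(s-t)/h}\bigl(v(x,s)-v(x,t)\bigr)\,\mathrm{d}s,
\]
estimate the $X$-norm by Jensen's inequality applied to the probability measure $\tfrac{1}{1-e^{-t/h}}\tfrac1h e^{(s-t)/h}\mathrm{d}s$ on $[0,t]$, and conclude via continuity of translations in $L^p(0,T;X)$ together with dominated convergence. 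The exponential prefactor $e^{-t/h}$ in the first term is dominated in $L^p$ and vanishes pointwise a.e.\ as $h\to 0$, so that term goes to zero as well.

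For (ii), since $v$ is uniformly continuous on the compact set $\overline{\Omega_T}$, I would use the same decomposition, note that $v_o(x)=v(x,0)$, and bound the first term by $e^{-t/h}\,\omega(t)\to 0$ uniformly, where $\omega$ is the modulus of continuity of $v$ in $t$; for the integral term, split the interval of integration at $s=t-\delta$ and use that $|v(x,s)-v(x,t)|\le\omega(\delta)$ on $[t-\delta,t]$ and $\tfrac1h\int_0^{t-\delta}e^{(s-t)/h}\mathrm{d}s=e^{-\delta/h}-e^{-t/h}\to 0$ to conclude uniform convergence. The main technical point—and the only place where care is really needed—is the Jensen/Fubini argument for item (i), since it must work for the general Banach-space-valued setting $X\in\{L^p(\Omega),W^{1,p}(\Omega),W^{1,p}_0(\Omega)\}$; however, all three spaces are separable Banach spaces, so the density-and-translation argument goes through without modification.
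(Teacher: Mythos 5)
Your argument is correct, and it is essentially the standard proof of this lemma; the paper itself gives no proof but defers to \cite[Lemma 2.2]{KinnunenLindqvist2006}, where the same strategy is used: the explicit differentiation for (iii), the identity $v=e^{-t/h}v+\tfrac1h\int_0^t e^{(s-t)/h}v(\cdot,t)\,\mathrm{d}s$ to rewrite $\mollifytime{v}{h}-v$, and the Jensen/Fubini plus continuity-of-translations argument for the $L^p(0,T;X)$ convergence. The only cosmetic point worth noting is that in (iii) one implicitly needs $v_o$ in $L^p(\Omega)$ (not merely $L^1$) so that $\mollifytime{v}{h}\in L^p(\Omega_T)$, which is satisfied in all applications in the paper.
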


\section{Weak solution to the obstacle problem} \label{sec:varsol}

We consider an obstacle function $\psi:\Omega\times[0,T]\to\R_{\ge0}$ and
abbreviate $\psi_o:=\psi(\cdot,0)$. 
Now we define the notion of weak solution to the obstacle problem. 
Let
$$
K_\psi(\Omega_T) := \{ v \colon
v^m \in L^2(0,T;H^{1}(\Omega)),\, u\in C([0,T];L^{m+1}(\Omega)), v \geq \psi \text{ a.e. in } \Omega_T \}
$$
and
$$
K'_\psi(\Omega_T) := \{ v \in K_\psi(\Omega_T) : \partial_t (v^m) \in L^\frac{m+1}{m}(\Omega_T) \}.
$$
For cutoff functions
$\eta\in C^1_0(\Omega, \R_{\geq 0})$ and $\alpha \in
W^{1,\infty}_0((0,T);\R_{\geq 0})$ we define
\begin{align*}
\llangle \partial_t u, \alpha \eta (v^m - u^m) \rrangle &:= \iint_{\Omega_T} \eta \left[ \alpha' \left( \tfrac{1}{m+1} u^{m+1} - uv^m \right) -\alpha u \partial_t v^m \right] \, \d x \d t.
\end{align*}

\begin{definition} \label{d.local-weak-obstacle}

A function $u \in K_\psi(\Omega_T)$ is a local weak solution to the obstacle problem for the PME if
\begin{equation} \label{e.local_var_eq}
\llangle \partial_t u, \alpha \eta (v^m - u^m) \rrangle + \iint_{\Omega_T} \alpha \nabla u^m \cdot \nabla \left(\eta( v^m - u^m) \right) \d x \d t \geq 0
\end{equation}
holds true for all comparison maps $v \in K'_\psi(\Omega_T)$, every
$\eta\in C^1_0(\Omega,\R_{\ge0})$ and $\alpha \in W^{1,\infty}_0((0,T);\R_{\geq 0})$.
\end{definition}

For a weak solution, we additionally require that it agrees with the
obstacle on the parabolic boundary.

\begin{definition} \label{d.variational-obstacle}

A function $u \in K_{\psi}(\Omega_T)$ is a weak solution to
the obstacle problem for the PME with $u=\psi$ on $\partial_p\Omega_T$
if it is a local weak solution in the sense of
Definition~\ref{d.local-weak-obstacle}, it attains the 
initial values $u(\cdot,0)=\psi_o$ a.e. in $\Omega$ and
$u^m - \psi^m \in L^2(0,T;H^1_0(\Omega))$.
\end{definition}

We will rely on the following results on existence and regularity of
weak solutions to the obstacle problem. For the existence, we refer to \cite{BLS,Schaetzler2}, see also Appendix~\ref{appendix-a}. Regularity follows from~\cite{CS-obstacle,MS}. 
\begin{theorem}\label{t.cont-exist}
  Let $\Omega \subset \R^n$ be a bounded Lipschitz domain and $\psi^m\in
  L^2(0,T;H^1(\Omega))\cap C([0,T];L^{\frac{m+1}{m}}(\Omega))$ with $\partial_t(\psi^m)\in
  L^{\frac{m+1}{m}}(\Omega_T)$ and $\psi_o^m \in
  L^{\frac{m+1}{m}}(\Omega) \cap H^1(\Omega)$. Then, there exists a weak
  solution $u$ to the obstacle problem with $u=\psi$ on
  $\partial_p\Omega_T$ in the sense of Definition~\ref{d.variational-obstacle}.
  
  If the obstacle additionally satisfies $\psi\in C^{0;\beta,\frac{\beta}{2}}(\Omega_T)$ for some $\beta \in (0,1)$,
  then every locally bounded local weak solution to the  obstacle problem is locally H\"older continuous in $\Omega_T$. 
\end{theorem}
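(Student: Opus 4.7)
The plan is to separate the two assertions---existence, then H\"older regularity of locally bounded weak solutions---and address each by a classical scheme. For existence I would follow the minimizing-movement (Rothe) construction of~\cite{BDM,BLS,Schaetzler2}; for the H\"older bound, the intrinsic-scaling arguments of~\cite{CS-obstacle,MS}.

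For existence, fix $h = T/N$ and let $\psi_k$ be a time-average of $\psi$ on the slab $((k{-}1)h, kh]$, so that $\psi_k^m\in H^1(\Omega)$ inherits the assumed Sobolev regularity. Setting $u_0 := \psi_o$, define $u_k$ recursively as the unique minimizer, over $\{w\geq\psi_k,\, w^m-\psi_k^m\in H^1_0(\Omega)\}$, of a PME-adapted strictly convex functional of the schematic form
\[
F_k(w) = \int_\Omega \Big[\tfrac12|\nabla w^m|^2 + \tfrac{1}{h}\,\b(w, u_{k-1})\Big]\dx,
\]
where $\b$ is a Bregman-type penalty tailored to the PME nonlinearity. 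The Euler-Lagrange condition at each step is a discrete elliptic obstacle variational inequality; testing against $\varphi = \psi_k^m$ and telescoping yields uniform bounds on $\nabla(u^{(h)})^m$ in $L^2(\Omega_T)$ and on the piecewise-constant interpolant $u^{(h)}$ in $L^\infty(0,T;L^{m+1}(\Omega))$, with the hypothesis $\partial_t\psi^m\in L^{(m+1)/m}$ entering to absorb the obstacle's own time derivative. An Aubin-Lions compactness argument furnishes a limit $u$ satisfying $u^m\in L^2(0,T;H^1(\Omega))$, $u^m-\psi^m\in L^2(0,T;H^1_0(\Omega))$, $u\in C([0,T];L^{m+1}(\Omega))$, and $u(\cdot,0)=\psi_o$---the last item using $\psi_o^m\in H^1(\Omega)$ to secure strong $L^{m+1}$-continuity in time up to $t=0$.

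To identify $u$ with a weak solution in the sense of Definition~\ref{d.variational-obstacle}, I would pass to the limit $h\to 0$ in the discrete inequality against a fixed comparison $v\in K'_\psi(\Omega_T)$ and cutoffs $\eta,\alpha$: the gradient contribution converges by weak-strong pairing plus convexity-based lower semicontinuity, while the time-difference sum $\sum_k\int(u_k-u_{k-1})\,\alpha_k\eta(v_k^m - u_k^m)$ is rearranged by Abel summation so that the discrete time derivative falls on $v$ rather than on $u$, producing in the limit precisely the bracket $\llangle\partial_t u,\alpha\eta(v^m-u^m)\rrangle$ of Definition~\ref{d.local-weak-obstacle}; this is exactly why the competitor class $K'_\psi$ imposes $\partial_t v^m\in L^{(m+1)/m}$. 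For the H\"older assertion, take $\psi\in C^{0;\beta,\beta/2}$ and $u$ a locally bounded local weak solution. By Lemma~\ref{l.variational-solution-noncontactset}, on the open noncontact set $\{u>\psi\}$ the function $u$ solves the unconstrained singular PME, where DiBenedetto-type intrinsic-scaling theory gives local H\"older continuity. Near a contact point $(x_0,t_0)$, the inequality $u\geq\psi$ transfers the H\"older modulus of $\psi$ to $u$ from below, while a Caccioppoli estimate for the super-level sets $\{u>k\}$ on intrinsic cylinders $B_\rho(x_0)\times(t_0-\theta\rho^2,t_0]$ with $\theta\simeq u(x_0,t_0)^{1-m}$---admissible because sufficiently high cutoff levels remain above $\psi$---feeds a De Giorgi iteration delivering the matching upper oscillation decay.

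The principal obstacle I expect is the limit passage in the parabolic term: since $\partial_t u$ has no a priori meaning, matching the Rothe difference with the symbolic bracket of Definition~\ref{d.local-weak-obstacle} requires the above Abel summation together with the time-mollification $\mollifytime{\cdot}{h}$ of Lemma~\ref{lem:mollifier} to approximate $v$ by smooth competitors that remain in $K'_\psi$, and an equiintegrability argument for the nonlinear quantity $\int u^{m+1}$ to upgrade the relevant weak convergences into the pointwise statements needed to identify the limit integrand.
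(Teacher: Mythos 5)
Your strategy is not the paper's. The paper does not reprove existence or regularity at all: existence is quoted from \cite{BLS,Schaetzler2} and H\"older continuity from \cite{CS-obstacle,MS}. The mathematical content the paper itself supplies for Theorem~\ref{t.cont-exist} is entirely in Appendix~\ref{appendix-a}, namely the reconciliation of solution notions: the existence result of \cite{Schaetzler2} produces a \emph{variational} solution in the sense of Definition~\ref{d.obstacle-varsol} (the integrated inequality with the boundary terms $I(u,v)$), and Lemmas~\ref{lem:varsol-is-weaksol} and~\ref{lem:global-wsol-is-local} show that such a solution satisfies the bracket inequality \eqref{e.local_var_eq} of Definition~\ref{d.variational-obstacle}. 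This passage is nontrivial (it uses the time mollification $\mollifytime{\cdot}{h}$, carefully chosen convex combinations of comparison maps, and the convexity inequality \eqref{time-deriv-moll}) and your proposal bypasses it entirely by aiming at Definition~\ref{d.variational-obstacle} directly from a Rothe scheme. That is a legitimate alternative in principle, but it does not avoid the difficulty --- it relocates it: identifying the limit of the discrete time-difference sum with the formal bracket $\llangle\partial_t u,\alpha\eta(v^m-u^m)\rrangle$ is essentially the same computation as the appendix, and your sketch only names it ("Abel summation", "equiintegrability for $\int u^{m+1}$") without carrying it out. The uniform $L^\infty(0,T;L^{m+1})$ and $L^2$-gradient bounds and the Aubin--Lions step are likewise asserted rather than derived; for the PME the strong convergence needed to pass to the limit in $u^{m+1}$ and $uv^m$ is exactly the delicate point, since time compactness is only available for $u$ in a very weak space.

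Two further soft spots. First, in the regularity part the claim that "$u\geq\psi$ transfers the H\"older modulus of $\psi$ to $u$ from below" near a contact point is not a proof of oscillation decay: the lower bound controls only one side, and the matching upper bound via De Giorgi iteration on intrinsic cylinders is the entire difficulty in the singular range $0<m<1$, where the intrinsic geometry degenerates as $u\to0$; this is precisely what \cite{CS-obstacle,MS} establish and what the paper cites rather than reproves. Second, your discrete obstacle $\psi_k$ (a time average) need not dominate $\psi$ pointwise on the slab, so the admissibility of the limit $u$ in $K_\psi(\Omega_T)$ requires an extra argument. In summary: the approach is the standard one underlying the cited literature and is not wrong in outline, but as written it is a plan with the decisive steps missing, and it omits the one piece of mathematics (the equivalence of the variational and weak formulations) that the paper actually proves for this theorem.
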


We show that every local weak solution to the obstacle problem is a weak supersolution to the obstacle free problem.

\begin{lemma} \label{l.variationalsol-is-supersol}
Suppose that $\Omega \Subset \R^n$ and let $u \in K_\psi(\Omega_T)$ be a local weak solution according to Definition~\ref{d.local-weak-obstacle} to the obstacle problem with obstacle $\psi \in C(\overline{\Omega_T})$. Then $u$ is a weak supersolution.
\end{lemma}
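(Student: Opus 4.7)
I aim to verify the weak supersolution inequality
$$
\iint_{\Omega_T}\bigl(-u\,\partial_t\varphi+\nabla u^m\cdot\nabla\varphi\bigr)\,\dx\,\dt\ge 0
$$
for all nonnegative $\varphi\in C_0^\infty(\Omega_T)$. The naive choice $v^m=u^m+\varphi$ in Definition~\ref{d.local-weak-obstacle} satisfies the obstacle constraint $v\ge u\ge\psi$ and makes $v^m-u^m=\varphi$, but it fails the time regularity $\partial_t v^m\in L^{\frac{m+1}{m}}(\Omega_T)$ needed for $v\in K'_\psi(\Omega_T)$. My strategy is therefore to replace $u^m$ by an exponential mollification and pass to the limit.

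Concretely, for $h>0$ I set
$$
v_h^m:=\mollifytime{u^m}{h}+\varphi+c_h,
$$
with $\mollifytime{u^m}{h}$ defined via~\eqref{eq:time-mollif} using the initial datum $\psi^m(\cdot,0)$, and $c_h:=\lVert\mollifytime{\psi^m}{h}-\psi^m\rVert_{L^\infty(\overline{\Omega_T})}$, which tends to $0$ as $h\to 0$ by Lemma~\ref{lem:mollifier}(ii) and the uniform continuity of $\psi^m$ on the compact set $\overline{\Omega_T}$. Monotonicity of the mollifier and $u\ge\psi$ yield $v_h^m\ge\mollifytime{\psi^m}{h}+c_h\ge\psi^m$, while Lemma~\ref{lem:mollifier}(i),(iii) supplies the remaining conditions for $v_h\in K'_\psi(\Omega_T)$; in particular $\partial_t v_h^m=h^{-1}(u^m-\mollifytime{u^m}{h})+\partial_t\varphi\in L^{\frac{m+1}{m}}(\Omega_T)$ since $u^m\in L^{\frac{m+1}{m}}(\Omega_T)$ via $u\in C([0,T];L^{m+1}(\Omega))$ on the bounded cylinder. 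I would next choose cutoffs $\eta\in C_0^1(\Omega;\R_{\ge 0})$ and $\alpha\in W^{1,\infty}_0((0,T);\R_{\ge 0})$ with $\eta(x)\alpha(t)=1$ on $\spt\varphi$, substitute $v=v_h$ into~\eqref{e.local_var_eq}, expand, and let $h\to 0$. The gradient term converges to $\iint\alpha\nabla u^m\cdot\nabla(\eta\varphi)\,\dx\,\dt$ by strong convergence $\mollifytime{u^m}{h}\to u^m$ in $L^2(0,T;H^1(\Omega))$, while the $\varphi$- and $c_h$-contributions in the bracket collapse, via $\alpha'\varphi+\alpha\partial_t\varphi=\partial_t(\alpha\varphi)$, to $-\iint\eta u\,\partial_t(\alpha\varphi)\,\dx\,\dt$ (the $c_h$-piece vanishing since $u\in L^1(\Omega_T)$).

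The technical heart is to verify that the remaining ``$u^{m+1}$-residual''
$$
\iint_{\Omega_T}\eta\Bigl[\alpha'\tfrac{u^{m+1}}{m+1}-\alpha' u\,\mollifytime{u^m}{h}-\alpha u\,\partial_t\mollifytime{u^m}{h}\Bigr]\,\dx\,\dt
$$
tends to zero as $h\to 0$. Since $u$ has no weak time derivative, a direct integration by parts is not available. Instead, I would use Lemma~\ref{lem:mollifier}(iii) to rewrite $u\,\partial_t\mollifytime{u^m}{h}=h^{-1}u(u^m-\mollifytime{u^m}{h})$ and compare it with $\partial_t\mollifytime{u^{m+1}}{h}=h^{-1}(u^{m+1}-\mollifytime{u^{m+1}}{h})$ (using the same exponential mollifier with initial datum $\psi^{m+1}(\cdot,0)$), integrating the latter by parts in time against $\alpha$. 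The remaining discrepancy reduces to integrated mollification errors that vanish in $L^1(\Omega_T)$ by the $L^{\frac{m+1}{m}}$-convergence of the mollifications. This cancellation is the only step where the missing time regularity of $u$ creates a genuine difficulty; once settled, taking $\eta,\alpha\equiv 1$ on $\spt\varphi$ yields the claimed supersolution inequality.
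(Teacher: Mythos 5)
Your overall architecture --- the comparison map $v_h^m=\mollifytime{u^m}{h}+\varphi+c_h$ with $c_h=\|\mollifytime{\psi^m}{h}-\psi^m\|_\infty$, the verification of the obstacle constraint, and the passage $h\to0$ with cutoffs satisfying $\alpha\eta\equiv1$ on $\spt\varphi$ --- is exactly the paper's. But the step you yourself single out as the technical heart is where the argument breaks down. You try to show that the residual
\[
\iint_{\Omega_T}\eta\Bigl[\alpha'\bigl(\tfrac{1}{m+1}u^{m+1}-u\mollifytime{u^m}{h}\bigr)-\alpha u\,\partial_t\mollifytime{u^m}{h}\Bigr]\dx\dt
\]
tends to zero by comparing $u\,\partial_t\mollifytime{u^m}{h}$ with $\partial_t\mollifytime{u^{m+1}}{h}$ and invoking the $L^{\frac{m+1}{m}}$-convergence of the mollifications. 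The discrepancy between these two quantities is $h^{-1}\bigl(\mollifytime{u^{m+1}}{h}-u\mollifytime{u^m}{h}\bigr)$: the bracket does tend to zero in $L^1(\Omega_T)$, but the prefactor $h^{-1}$ is precisely of the order at which the mollification error lives, so the product has no reason to vanish, and your justification does not establish that it does. The correct device --- and the one the paper uses --- is one-sided: since $\partial_t\mollifytime{u^m}{h}=h^{-1}(u^m-\mollifytime{u^m}{h})$ and $s\mapsto s^m$ is increasing, one has pointwise $(u-\mollifytime{u^m}{h}^{1/m})\,\partial_t\mollifytime{u^m}{h}\ge0$, hence $u\,\partial_t\mollifytime{u^m}{h}\ge\tfrac{m}{m+1}\partial_t\mollifytime{u^m}{h}^{\frac{m+1}{m}}$. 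Integrating this exact time derivative by parts against $\alpha\eta$ bounds the residual from above by $\iint_{\Omega_T}\eta\alpha'\bigl(\tfrac1{m+1}u^{m+1}-u\mollifytime{u^m}{h}+\tfrac{m}{m+1}\mollifytime{u^m}{h}^{\frac{m+1}{m}}\bigr)\dx\dt\to0$, and a $\limsup\le0$ is all the supersolution inequality requires; the two-sided limit you aim for is both unnecessary and out of reach with the available regularity of $u$.

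A secondary admissibility gap: you take $\psi^m(\cdot,0)$ as the initial datum of the time mollification, but $\psi$ is only assumed continuous, so $\psi^m(\cdot,0)$ need not belong to $H^1(\Omega)$. Then $\mollifytime{u^m}{h,\psi_o^m}(\cdot,t)$ contains the term $e^{-t/h}\psi_o^m$ and fails to lie in $L^2(0,T;H^1(\Omega))$, so $v_h\notin K'_\psi(\Omega_T)$ and it is not a legitimate comparison map. The paper repairs this by extending $\psi(\cdot,0)$ continuously to $\R^n$, mollifying it spatially to obtain $(\psi_o^m)_\delta\in H^1(\Omega)$, and coupling $\delta$ and $h$ along suitable sequences so that both the uniform convergence of $\mollifytime{\psi^m}{h,(\psi_o^m)_\delta}$ and the $L^2(0,T;H^1(\Omega))$-convergence of $\mollifytime{u^m}{h,(\psi_o^m)_\delta}$ survive; your proof needs the same adjustment.
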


\begin{remark}
One could alternatively assume that $\psi^m \in L^2(0,T;H^1(\Omega))$ and $\partial_t \psi^m \in L^\frac{m+1}{m}(\Omega_T)$ instead of continuity of $\psi$.
\end{remark}

\begin{proof}
Let $\varphi \in C^\infty_0(\Omega_T, \R_{\geq 0})$. In the definition
of local weak solution, we consider
cutoff functions $\eta\in C^1_0(\Omega,\R_{\ge0})$ and $\alpha \in
W^{1,\infty}([0,T];\R_{\geq 0})$ with $\eta \alpha
\equiv 1$ in $\spt (\varphi)$ such that $\alpha$ is compactly supported in
$(0,T)$. Let $R \geq 1$ such that $B_R(0) \supset \Omega$, and extend
$\psi(\cdot,0)$ as continuous function to $B_{2R} (0) \setminus
\overline{\Omega}$ and as zero to $\R^n \setminus B_{2R}(0)$. Denote
this extension by $\psi_o$. We will use time mollifications
$\mollifytime{u^m}{h,(\psi_o^m)_\delta}$ and
$\mollifytime{\psi^m}{h,(\psi_o^m)_\delta}$. Observe that for each
sequence $\eps_i \xrightarrow{i\to \infty} 0$ there exists a sequence
$\delta_i \xrightarrow{i \to \infty} 0$ such that
$$
\|\mollifytime{\psi^m}{h,(\psi_o^m)_{\delta}} -
\mollifytime{\psi^m}{h,\psi_o^m}\|_{L^\infty(\Omega_T)}
=
\|(\psi_o^m)_{\delta} -\psi_o^m\|_{L^\infty(\Omega)}
< \eps_i
$$ for
every $\delta \in (0,\delta_i]$,
uniformly in $h>0$, 
and by Lemma~\ref{lem:mollifier}\,(ii) there is a sequence $\hat h_i
\xrightarrow{i \to \infty}0$ such that
$$
  \|\mollifytime{\psi^m}{h,\psi_o^m} - \psi^m\|_{L^\infty(\Omega_T)} <
  \eps_i
$$
for every $h \in (0,\hat h_i]$. Furthermore, for $\eps_i$ and
$\delta_i$ constructed above, Lemma~\ref{lem:mollifier}\,(i) yields a sequence $\tilde h_i
\xrightarrow{i \to \infty} 0$ such that
$$
  \max\{\| \mollifytime{u^m}{h,(\psi_o^m)_{\delta_i}} - u^m
  \|_{L^\frac{m+1}{m}(\Omega_T)},
  \|\mollifytime{u^m}{h,(\psi_o^m)_{\delta_i}} -
  u^m\|_{L^2(0,T;H^1(\Omega))} \} < \eps_i
$$
for each $h \in (0,\tilde h_i]$. By choosing $h_i := \min\{\hat h_i, \tilde h_i\}$ we have that
\begin{align*}
  \begin{array}{rl}
  \mollifytime{\psi^m}{i} :=
  \mollifytime{\psi^m}{h_i,(\psi_o^m)_{\delta_i}} \xrightarrow{i \to
  \infty } \psi^m&\mbox{ uniformly in $\Omega_T$ and}\\[0.8ex]
  \mollifytime{u^m}{i} := \mollifytime{u^m}{h_i,(\psi_o^m)_{\delta_i}} \xrightarrow{i \to \infty } u\phantom{^m}&\mbox{ in $L^2(0,T;H^1(\Omega))$ and in $L^\frac{m+1}{m}(\Omega_T)$}.
  \end{array}
\end{align*}
Define a comparison map 
$$
v^m_i = \mollifytime{u^m}{i} + \varphi + \|\psi^m - \mollifytime{\psi^m}{i}\|_\infty.
$$
It follows that $v^m_i \in K'_\psi(\Omega_T)$ and in particular,
$$
v^m_i \geq \mollifytime{u^m}{i} + \psi^m - \mollifytime{\psi^m}{i}  \geq \psi^m \quad \text{ a.e. in } \Omega_T,
$$
since $\varphi \geq 0$ and $u \geq \psi$ a.e. in $\Omega_T$. 

For the parabolic part in~\eqref{e.local_var_eq} we have
\begin{align*}
  &\llangle \partial_t u , \alpha \eta (v^m_i -u^m) \rrangle \\
  &\qquad= \iint_{\Omega_T} \eta \alpha' \left(\tfrac{1}{m+1}u^{m+1} - u (\mollifytime{u^m}{i} + \varphi + \|\psi^m - \mollifytime{\psi^m}{i}\|_\infty )\right) \, \d x \d t \\
&\qquad\phantom{+} - \iint_{\Omega_T} \alpha \eta u \partial_t \left( \mollifytime{u^m}{i} + \varphi + \|\psi^m - \mollifytime{\psi^m}{i}\|_\infty \right) \, \d x \d t. 
\end{align*}
Since Lemma~\ref{lem:mollifier} (iii) implies
  \begin{align}\label{time-deriv-moll}
    u\partial_t\mollifytime{u^m}{i}
    &=
    \big(u-\mollifytime{u^m}{i}^{\frac1m}\big)\partial_t\mollifytime{u^m}{i}
    +
    \mollifytime{u^m}{i}^{\frac1m}\partial_t\mollifytime{u^m}{i}\\\nonumber
    &\ge
    \mollifytime{u^m}{i}^{\frac1m}\partial_t\mollifytime{u^m}{i}
    =
    \tfrac{m}{m+1}\partial_t\mollifytime{u^m}{i}^{\frac{m+1}{m}},
  \end{align}
we may estimate 
\begin{align*}
- \iint_{\Omega_T} \alpha \eta u \partial_t \mollifytime{u^m}{i} \, \d x \d t &\leq
    \tfrac{m}{m+1}\iint_{\Omega_T} \alpha' \eta \mollifytime{u^m}{i}^\frac{m+1}{m}  \, \d x \d t.
\end{align*}
By combining the
estimates above, and using Lemma~\ref{lem:mollifier} (i), (ii) together with the fact $\alpha\eta \equiv 1$ in $\spt(\varphi)$, we have
\begin{align*}
\limsup_{i\to\infty}\llangle \partial_t u , \alpha \eta (v_i^m -u^m) \rrangle &\leq - \iint_{\Omega_T} \eta u( \alpha' \varphi + \alpha \partial_t \varphi) \, \d x \d t \\
&= -\iint_{\Omega_T} u \partial_t \varphi \, \d x \d t.
\end{align*}
For the divergence part we obtain
\begin{align*}
\iint_{\Omega_T} &\alpha \nabla u^m \cdot \nabla(\eta (v^m_i - u^m) ) \, \d x \d t\\ 
&= \iint_{\Omega_T} \alpha (\mollifytime{u^m}{i} - u^m ) \nabla u^m \cdot \nabla\eta \, \d x \d t +\iint_{\Omega_T} \alpha \varphi \nabla u^m \cdot \nabla\eta \, \d x \d t \\
&\phantom{+}+\iint_{\Omega_T} \alpha \|\psi^m - \mollifytime{\psi^m}{i}\|_\infty \nabla u^m \cdot \nabla \eta\, \d x\d t \\
&\phantom{+}+ \iint_{\Omega_T} \alpha \eta  \nabla u^m \cdot \nabla(\mollifytime{u^m}{i} - u^m ) \, \d x \d t+ \iint_{\Omega_T} \alpha \eta  \nabla u^m \cdot \nabla \varphi \, \d x \d t. 
\end{align*}
Again, by using the fact $\alpha \eta \equiv 1$ in $\spt(\varphi)$ and Lemma~\ref{lem:mollifier} (i), (ii) we arrive at
$$
\lim_{i\to\infty} \iint_{\Omega_T} \alpha \nabla u^m \cdot \nabla(\eta (v^m_i - u^m) ) \, \d x \d t = \iint_{\Omega_T} \nabla u^m \cdot \nabla \varphi \, \d x \d t.
$$
By combining the results, we obtain
\begin{equation*}
  \iint_{\Omega_T} (-u\partial_t\varphi+\nabla u^m \cdot \nabla \varphi) \, \d x \d t\ge0,
\end{equation*}
and the claim follows.
\end{proof}

The next lemma shows that local weak solutions to the obstacle problem
are weak solutions to the PME outside of the contact set. 

\begin{lemma} \label{l.variational-solution-noncontactset}
  Let $\psi\in C(\overline{\Omega_T})$, and
  $u\in K_\psi(\Omega_T)$ be a continuous, local weak solution to the obstacle problem
  for the porous medium equation in the sense of
  Definition~\ref{d.local-weak-obstacle}.
  Then, $u$ is a weak solution to the
  porous medium equation
  in the set $\{(x,t)\in\Omega_T\colon u(x,t)>\psi(x,t)\}$. 
\end{lemma}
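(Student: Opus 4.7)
The plan is to mimic the construction in the proof of Lemma~\ref{l.variationalsol-is-supersol}, but to allow a \emph{signed} perturbation of $u^m$ by a test function supported in the non-contact set, so that the resulting one-sided inequality becomes an equality. Fix $\varphi\in C^\infty_0(U)$ where $U:=\{(x,t)\in\Omega_T:u(x,t)>\psi(x,t)\}$ is open by continuity of $u$ and $\psi$. Set $K:=\spt(\varphi)\Subset U$ and use compactness plus continuity to choose $\eta_1>0$ with
\[
u^m-\psi^m\geq 2\eta_1\quad\text{on }K.
\]

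I would take the mollifications $\mollifytime{u^m}{i}=\mollifytime{u^m}{h_i,(\psi_o^m)_{\delta_i}}$ and $\mollifytime{\psi^m}{i}=\mollifytime{\psi^m}{h_i,(\psi_o^m)_{\delta_i}}$ exactly as in the proof of Lemma~\ref{l.variationalsol-is-supersol}, so that $\mollifytime{\psi^m}{i}\to\psi^m$ uniformly on $\overline{\Omega_T}$ and $\mollifytime{u^m}{i}\to u^m$ in $L^2(0,T;H^1(\Omega))\cap L^{\frac{m+1}{m}}(\Omega_T)$. Two additional features are used: since $u\geq\psi$ a.e.\ and the same initial datum is chosen for both mollifications, $\mollifytime{u^m}{i}\geq\mollifytime{\psi^m}{i}$ pointwise; and since $u$ is continuous and $K\Subset\Omega_T$, the exponential kernel gives $\mollifytime{u^m}{i}\to u^m$ \emph{uniformly on $K$} by a standard direct computation. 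Consequently,
\[
C_i:=\|\psi^m-\mollifytime{\psi^m}{i}\|_\infty+\|\mollifytime{u^m}{i}-u^m\|_{L^\infty(K)}\xrightarrow{i\to\infty}0.
\]

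For any $\lambda\in\R$ I then define the comparison map
\[
v_{i,\lambda}^m := \mollifytime{u^m}{i}+\lambda\varphi+C_i.
\]
To verify $v_{i,\lambda}\in K'_\psi(\Omega_T)$: outside $K$,
\[
v_{i,\lambda}^m\geq\mollifytime{u^m}{i}+C_i\geq\mollifytime{\psi^m}{i}+\|\psi^m-\mollifytime{\psi^m}{i}\|_\infty\geq\psi^m,
\]
while on $K$, for $i$ large enough and $|\lambda|\|\varphi\|_\infty\leq 2\eta_1$,
\[
v_{i,\lambda}^m\geq u^m+\lambda\varphi\geq\psi^m+2\eta_1-2\eta_1=\psi^m.
\]
The time-regularity $\partial_tv_{i,\lambda}^m\in L^{\frac{m+1}{m}}(\Omega_T)$ is immediate from Lemma~\ref{lem:mollifier}(iii) and the smoothness of $\varphi$.

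Insert $v_{i,\lambda}$ into~\eqref{e.local_var_eq} with cutoffs $\eta\in C^1_0(\Omega,\R_{\ge0})$ and $\alpha\in W^{1,\infty}_0((0,T);\R_{\ge0})$ chosen so that $\alpha\eta\equiv 1$ on $\spt(\varphi)$ and $\alpha'\equiv0$ on the time projection of $\spt(\varphi)$. The treatment of $\llangle\partial_t u,\alpha\eta(v_{i,\lambda}^m-u^m)\rrangle$ is identical to Lemma~\ref{l.variationalsol-is-supersol}: the pointwise inequality~\eqref{time-deriv-moll}, which follows from $\partial_t\mollifytime{u^m}{i}=\frac1{h_i}(u^m-\mollifytime{u^m}{i})$ and the monotonicity of $t\mapsto t^{1/m}$, \emph{does not depend on the sign} of the added $\lambda\varphi$ term; and the constant shift $C_i$ has vanishing time derivative and contributes only terms that disappear as $i\to\infty$. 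The divergence term is handled linearly in $\lambda\varphi$. Passing to the limit $i\to\infty$ and using $\alpha\eta\equiv1$, $\alpha'\equiv0$ on $\spt(\varphi)$ yields
\[
\lambda\biggl[-\iint_{\Omega_T}u\,\partial_t\varphi\,\d x\d t+\iint_{\Omega_T}\nabla u^m\cdot\nabla\varphi\,\d x\d t\biggr]\geq 0,
\]
and, taking both signs of $\lambda$ admissible thanks to the strict positivity $u^m-\psi^m\geq 2\eta_1$ on $K$, the bracketed quantity must vanish.

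The main obstacle is the construction of an admissible comparison map permitting \emph{both} signs of the perturbation: namely, producing a vanishing shift $C_i$ that still enforces $v_{i,\lambda}^m\geq\psi^m$ a.e.\ when $\lambda\varphi$ may be negative. This is exactly where the hypothesis that $u$ is continuous enters, upgrading the $L^p$-type convergence of the mollification to uniform convergence on the compact subset $K$ of the non-contact set; the rest of the argument is a sign-insensitive repetition of the computation in Lemma~\ref{l.variationalsol-is-supersol}.
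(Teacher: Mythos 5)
Your overall strategy is the same as the paper's: perturb a time-mollification of $u^m$ by $\lambda\varphi$ with \emph{both} signs of $\lambda$ admissible thanks to the strict separation $u^m-\psi^m\geq 2\eta_1$ on $\spt(\varphi)$, restore global admissibility of the comparison map away from $\spt(\varphi)$, and observe that the one-sided estimate \eqref{time-deriv-moll} for the parabolic term is insensitive to the sign of the perturbation. That part of your argument is sound.

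The gap is the claim that $\mollifytime{u^m}{i}\to u^m$ \emph{uniformly on} $K$ ``by a standard direct computation.'' Your hypotheses give $u$ continuous only in the open cylinder $\Omega_T$ (plus $u\in C([0,T];L^{m+1}(\Omega))$), not in $\overline{\Omega_T}$, so Lemma~\ref{lem:mollifier}\,(ii) does not apply to the globally defined mollification. The exponential kernel at a time $t$ in the time-projection of $K$ retains memory of $u^m(x,s)$ for \emph{all} $s\in(0,t)$, including times arbitrarily close to $0$ where $u$ is controlled only in $L^{m+1}(\Omega)$ and may be unbounded pointwise in $x$. Concretely, after splitting the kernel at $s=t-\rho$, the tail is bounded by
\begin{equation*}
  \frac{e^{-\rho/h}}{h}\int_0^{t}u^m(x,s)\,\d s,
\end{equation*}
and while this tends to $0$ for a.e.\ fixed $x$ (and in $L^1_x$), nothing in the hypotheses gives $\esssup_{x}\int_0^{T}u^m(x,s)\,\d s<\infty$ over the spatial projection of $K$, so the convergence need not be uniform on $K$; your constant $C_i$ then need not tend to $0$ (nor even be finite), and the admissibility check on $K$ as well as the limit passage collapse. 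This is precisely the point the paper's proof is engineered to avoid: there the mollification is taken \emph{on a cylinder} $Q=B\times(t_o,t_1)\Subset\{u>\psi\}$ with re-initialized datum $u^m(\cdot,t_o)$, so that only values of $u$ on $\overline{Q}\Subset\Omega_T$ --- where $u$ is uniformly continuous --- enter, and Lemma~\ref{lem:mollifier}\,(ii) applies legitimately on $Q$. Global admissibility of the comparison map is then recovered not by your additive constant but by interpolating with the constant $\sup_{\Omega_T}\psi^m$ via a cutoff $\zeta\in C^\infty_0(Q,[0,1])$ with $\zeta\equiv1$ on $\spt(\alpha\eta)$, which leaves the variational inequality unchanged. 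If you adopt this localization of the mollification, the rest of your argument goes through.
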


\begin{proof}
  It suffices to prove that $u$ is a weak solution in any parabolic
  cylinder $Q=B\times(t_o,t_1)\Subset\{u>\psi\}$.
  Without loss generality, we may assume that $u^m(\cdot,t_o)\in
  H^1(B)$.
  We consider an arbitrary test function $\varphi\in
  C^\infty_0(Q)$. In~\eqref{e.local_var_eq}, we choose cutoff functions 
  $\alpha\in W^{1,\infty}_0((t_o,t_1),[0,1])$ and $\eta\in
  C^1_0(B,[0,1])$ with $\alpha\eta\equiv1$ in $\spt(\varphi)$ and
  $\spt(\alpha\eta)\Subset Q$.
  We use the time mollification $\mollifytime{u^m}{h}$ 
  introduced in~\eqref{eq:time-mollif}
  on the domain $Q$ instead of $\Omega_T$, with initial values
  $u^m(\cdot,t_o)\in H^1(B)\cap L^{\frac{m+1}{m}}(B)$.
  Then we define a comparison map as
  \begin{equation*}
    v^m_h:= \mollifytime{u^m}{h}+\varepsilon\varphi 
  \end{equation*}
  with appropriately chosen $\eps>0$.  Since $u\in C(\overline Q)$, we have $\mollifytime{u^m}{h}\xrightarrow{h\downarrow0} u^m$ uniformly in
  $Q$ by Lemma~\ref{lem:mollifier} (ii), so that we may assume
  \begin{equation*}
    \mollifytime{u^m}{h}>u^m-d \mbox{\quad in $Q$, \qquad with }
    d:=\tfrac12\inf_{Q}(u^m-\psi^m)>0,
  \end{equation*}
  by choosing $h>0$ sufficiently small. If $\varphi\ge0$ in $Q$, we
  consider an arbitrary $\eps>0$. Otherwise, we restrict ourselves to
  parameters 
  \begin{equation*}
    0<\eps<\frac{d}{-\inf\varphi}.
  \end{equation*}
  In any case, we obtain $v^m_h>u^m-2d\ge\psi^m$ in $Q$. We note that by our choices of the cutoff
    functions, it is sufficient to ensure the obstacle constraint in
    $Q$. In fact, instead of $v_h^m$ we can use the comparison map
    \begin{equation*}
      v^m:=\zeta v_h^m+(1-\zeta)\sup_{\Omega_T}\psi^m,
    \end{equation*}
    with a cutoff function $\zeta\in C^\infty_0(Q,[0,1])$ satisfying
    $\zeta\equiv1$ on $\spt(\alpha\eta)$. Since $v_h\ge\psi$ a.e. in
    $Q$, we have $v\ge\psi$ a.e. in $\Omega_T$. Therefore, we can use
    $v$ in the variational inequality~\eqref{e.local_var_eq}. By our
    choice of $\zeta$, this gives the same result as plugging in
    $v_h$, i.e.
  \begin{align}\label{var-ineq}
    0&\le
       \langle\!\langle\partial_tu,\alpha\eta(v^m_h-u^m)\rangle\!\rangle
    +
    \iint_{\Omega_T}\alpha \nabla u^m\cdot
       \nabla\big(\eta(v^m_h-u^m)\big) \, \dx\dt\\\nonumber
    &=
     \iint_{\Omega_T} \eta \big( \alpha'\big(\tfrac1{m+1}u^{m+1}-u\mollifytime{u^m}{h}\big)
      -\alpha u\partial_t\mollifytime{u^m}{h} \big) \, \dx\dt\\\nonumber
     &\qquad
       -\eps\iint_{\Omega_T}\eta\big( \alpha'u\varphi+\alpha
       u\partial_t\varphi\big) \, \dx\dt\\\nonumber       
     &\qquad+
       \iint_{\Omega_T}\alpha \nabla u^m\cdot
       \nabla\big(\eta(\mollifytime{u^m}{h}-u^m+\varepsilon\varphi
       )\big) \, \dx\dt\\\nonumber
    &=:\mathrm{I}+\mathrm{II}+\mathrm{III}.
  \end{align}
  Using estimate~\eqref{time-deriv-moll}, 
  we can bound the first term by
  \begin{align*}
    \mathrm{I}
    &\le
    \iint_{\Omega_T} \eta \big( \alpha'\big(\tfrac1{m+1}u^{m+1}-u\mollifytime{u^m}{h}\big)
      -\alpha \tfrac{m}{m+1}\partial_t\mollifytime{u^m}{h}^{\frac{m+1}{m}}
      \big)\dx\dt\\
    &=
      \iint_{\Omega_T} \eta \alpha'\big(\tfrac1{m+1}u^{m+1}-u\mollifytime{u^m}{h}
      +\tfrac{m}{m+1}\mollifytime{u^m}{h}^{\frac{m+1}{m}}\big)\dx\dt\\
    &\xrightarrow{h\downarrow0} 0.
  \end{align*}
  The last convergence follows from Lemma~\ref{lem:mollifier}\,(i). 
  Moreover, since $\alpha\eta\equiv1$ on
  $\spt(\varphi)$, we have
  \begin{align*}
    \mathrm{II}
    =
    -\eps\iint_{\Omega_T}u\partial_t\varphi \, \dx\dt,
  \end{align*}
  and Lemma~\ref{lem:mollifier}\,(i) implies
  \begin{align*}
    \mathrm{III}
    \xrightarrow{h\downarrow0}
    \eps\iint_{\Omega_T}\alpha\nabla u^m\cdot\nabla(\eta\varphi)\,\dx\dt
    =
    \eps\iint_{\Omega_T}\nabla u^m\cdot\nabla\varphi \, \dx\dt.
  \end{align*}
  Therefore, by letting $h\downarrow0$
  in~\eqref{var-ineq}, we infer
  \begin{equation*}
    0\le\eps\iint_{\Omega_T}\big(-u\partial_t\varphi+\nabla
    u^m\cdot\nabla\varphi\big) \, \dx\dt.
  \end{equation*}
  Since the same inequality holds for $-\varphi$ in place of
  $\varphi$, we actually have equality. After dividing by $\eps>0$, we
  thus obtain
    \begin{equation*}
    \iint_{\Omega_T}\big(-u\partial_t\varphi+\nabla u^m\cdot\nabla\varphi\big) \, \dx\dt=0
  \end{equation*}
  for every $\varphi\in C^\infty_0(Q)$, for an arbitrary cylinder
  $Q\Subset\{u>\psi\}$. A partition of unity argument now implies that
  $u$ is a weak solution to the porous medium equation in the set 
  $\{u>\psi\}$. 
\end{proof}

\section{Minimal supersolution above the obstacle} \label{sec:minimal}

We define the minimal supersolution above the given obstacle as follows.

\begin{definition} \label{d.minimal-supercal}
Let $\psi \in C(\Omega_T) \cap L^\infty(\Omega_T)$. We say that $u: \Omega_T \to [0,\infty)$ is a minimal supersolution above the obstacle $\psi$ if the following properties hold true:
\begin{itemize}
\item[(i)] $u(x,t) \geq \psi(x,t)$ for every $(x,t) \in \Omega_T$;
\item[(ii)] $u$ is a supercaloric function in $\Omega_T$;
\item[(iii)] $u$ is the smallest supercaloric function in $\Omega_T$ which lies above $\psi$, i.e., if $v$ is a supercaloric function with $v(x,t)\geq \psi(x,t)$ for all $(x,t) \in \Omega_T$, then $v(x,t)\geq u(x,t)$ for all $(x,t) \in \Omega_T$;
\item[(iv)] $u$ is a weak solution to~\eqref{evo_eqn} in the set $\{(x,t) \in \Omega_T : u(x,t) > \psi (x,t) \}$.
\end{itemize}  
\end{definition}

Next we define the balayage of the given obstacle $\psi$, which will be our candidate for the solution in Definition~\ref{d.minimal-supercal}.

\begin{definition}
Let $\psi: \Omega_T \to [0,\infty]$ be a function, and denote
$$
\mathcal U_\psi := \{ v \text{ is a supercaloric function in } \Omega_T : v(x,t) \geq \psi(x,t)\, \text{ for every } (x,t) \in \Omega_T \}.
$$
We define the r\'eduite of $\psi$ as 
$$
R_\psi(x,t) = \inf \{ v(x,t): v \in \mathcal U_\psi \},
$$
and the balayage of $\psi$ as its lower semicontinuous regularization
$$
\hat R_\psi(x,t) := \lim_{\rho \to 0} \left( \inf_{B_\rho(x) \times (t-\rho^2,t+\rho^2)} R_\psi \right). 
$$

\end{definition}
From the definition it clearly follows that $\hat R_\psi$ is a lower semicontinuous function. Observe that, in general, $\hat R_\psi$ is not necessarily above $\psi$ at every point. However, for example if $\psi$ is lower semicontinuous, then $\hat R_\psi \geq \psi$ everywhere in $\Omega_T$.

\begin{remark}
By the argument in~\cite[Lemma 2.7]{LP} together with
Theorem~\ref{t.supercal-essliminf} we have that $\hat R_\psi =
(R_\psi)_*$ everywhere in $\Omega_T$, where $(R_\psi)_*$ denotes the
$\essliminf$-regularization of $R_\psi$ as introduced
in~\eqref{def:u-star}. Thus it does not matter which regularization we
use for the r\'eduite $R_\psi$. Moreover, if $\psi$ is a bounded
function, then it follows that $\hat R_\psi = R_\psi$ a.e. in
$\Omega_T$, see~\cite[Theorem 2.8]{LP} in connection with
Proposition~\ref{p.bdd-supercal-supersol},
Theorem~\ref{t.super_lsc} and Lemma~\ref{l.bounded_caccioppoli}.
\end{remark}

\begin{lemma} \label{l.balayage-supercal}
Let $\psi : \Omega_T \to [0,\infty)$ be a bounded, lower semicontinuous function in $\Omega_T$. Then $\hat R_\psi$ exists, it is bounded and satisfies properties (i), (ii) and (iii) in Definition~\ref{d.minimal-supercal}.
\end{lemma}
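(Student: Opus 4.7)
My plan is to verify, in the order they are stated, existence and boundedness of $\hat R_\psi$, followed by properties (i), (iii), and finally (ii), which is the main point. The overall approach is classical Perron/balayage style, adapted to the supercaloric framework introduced in the excerpt.

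To see that $\mathcal U_\psi$ is nonempty, so that $R_\psi$ and $\hat R_\psi$ are well defined and bounded, I would test it with the constant function $M:=\|\psi\|_{L^\infty(\Omega_T)}$. This function is continuous, finite, and satisfies the comparison principle in every $C^{2,\alpha}$-subcylinder by standard comparison for continuous weak solutions (as in Theorem~\ref{t.existence}); by the remark after Definition~\ref{d.supercal}, this is enough to conclude that $M$ is supercaloric. Since $M\ge\psi$ everywhere, $M\in\mathcal U_\psi$, so $R_\psi\le M$ pointwise. Because $(x,t)$ itself lies in every box $B_\rho(x)\times(t-\rho^2,t+\rho^2)$ appearing in the definition of the regularization, we also have $\hat R_\psi\le R_\psi\le M$, which gives existence and boundedness. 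Property (iii) is then essentially automatic: any supercaloric $v$ with $v\ge\psi$ lies in $\mathcal U_\psi$, so $v\ge R_\psi\ge\hat R_\psi$ pointwise.

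For property (i), I would use lower semicontinuity of $\psi$. Each $v\in\mathcal U_\psi$ satisfies $v\ge\psi$, hence $R_\psi\ge\psi$ pointwise. Fix $(x,t)\in\Omega_T$ and $\eps>0$. By lower semicontinuity there exists $\rho_0>0$ such that $\psi(y,s)>\psi(x,t)-\eps$ on $B_{\rho_0}(x)\times(t-\rho_0^2,t+\rho_0^2)\cap\Omega_T$, so the same lower bound propagates to $R_\psi$ on that parabolic box. Taking the infimum and letting $\rho\downarrow0$ in the definition yields $\hat R_\psi(x,t)\ge\psi(x,t)-\eps$; sending $\eps\downarrow0$ proves~(i).

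The main content is property (ii). Lower semicontinuity of $\hat R_\psi$ is built into its definition as the lsc regularization of $R_\psi$, and finiteness on a dense subset follows from boundedness. The step I expect to be the main obstacle is the comparison principle in an arbitrary subcylinder $Q_{t_1,t_2}=Q\times(t_1,t_2)\Subset\Omega_T$, and my plan here is a two-step argument. Given a continuous weak solution $h\in C(\overline{Q_{t_1,t_2}})$ with $h\le\hat R_\psi$ on $\partial_pQ_{t_1,t_2}$, for each fixed $v\in\mathcal U_\psi$ the chain $h\le\hat R_\psi\le R_\psi\le v$ holds on $\partial_pQ_{t_1,t_2}$; since $v$ is supercaloric, Definition~\ref{d.supercal}\,(iii) applied to $v$ gives $h\le v$ in $Q_{t_1,t_2}$, and taking the pointwise infimum over $v\in\mathcal U_\psi$ yields $h\le R_\psi$ in $Q_{t_1,t_2}$. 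To upgrade this to $h\le\hat R_\psi$, I would exploit the continuity of $h$: for any interior point $(x,t)\in Q_{t_1,t_2}$ and any sufficiently small $\rho>0$ with $B_\rho(x)\times(t-\rho^2,t+\rho^2)\Subset Q_{t_1,t_2}$, one has
\begin{equation*}
\inf_{B_\rho(x)\times(t-\rho^2,t+\rho^2)}R_\psi\ge\inf_{B_\rho(x)\times(t-\rho^2,t+\rho^2)}h,
\end{equation*}
and the right-hand side converges to $h(x,t)$ as $\rho\downarrow0$ by continuity. Passing to the limit in the defining formula for $\hat R_\psi$ then yields $\hat R_\psi(x,t)\ge h(x,t)$, completing the verification of~(ii).
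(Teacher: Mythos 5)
Your proposal is correct and follows essentially the same route as the paper: the constant $\sup_{\Omega_T}\psi$ as a member of $\mathcal U_\psi$ for existence and boundedness, lower semicontinuity of $\psi$ for (i), comparison against each $v\in\mathcal U_\psi$ followed by continuity of $h$ to pass from $R_\psi$ to $\hat R_\psi$ for (ii), and the chain $\psi\le\hat R_\psi\le R_\psi\le v$ for (iii). The only difference is that you spell out in more detail the steps the paper leaves implicit (why the constant is supercaloric, and the $\rho\downarrow0$ limit in the upgrade from $R_\psi$ to $\hat R_\psi$).
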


\begin{proof}
Since $\sup_{\Omega_T} \psi < \infty$ is a supercaloric
function as a constant, it follows that $\sup_{\Omega_T} \psi \in
\mathcal U_\psi$. Hence, the set $\mathcal U_\psi$ is nonempty and $\hat R_\psi \leq R_\psi \leq \sup_{\Omega_T} \psi < \infty$, which implies existence and boundedness of $\hat R_\psi$.

Clearly $R_\psi(x,t) \geq \psi(x,t)$ for every $(x,t) \in \Omega_T$ and since $\psi$ is lower semicontinuous, also $\hat R_\psi(x,t) \geq \psi(x,t)$ for every $(x,t) \in \Omega_T$. This shows property (i).

To show that $\hat R_\psi$ is supercaloric, we are left to ensure the
comparison principle, item (iii) in Definition~\ref{d.supercal}. Let
$Q \Subset \Omega_T$ be a cylinder and $h \in C(\overline{Q})$ be a
weak solution in $Q$ such that $h \leq \hat R_\psi $ on $\partial_p
Q$. This implies $h \leq v$ on $\partial_p Q$ for any $v\in \mathcal
U_\psi$, and since $v$ is supercaloric, we deduce
$h \leq v$ in $Q$. Since $v\in \mathcal U_\psi$ is arbitrary, this implies that $h \leq R_\psi$ in $Q$. Since $h$ is continuous, it follows that $h \leq \hat R_\psi$ in $Q$ completing the proof of property (ii).

To show item (iii) in Definition~\ref{d.minimal-supercal}, we observe
that for every $v\in\mathcal U_\psi$, we have $\psi\le\hat R_\psi\le
R_\psi\le v$ everywhere in $\Omega_T$. Hence, $\hat R_\psi$ is the smallest
supercaloric function above $\psi$. 
\end{proof}

\begin{proposition}\label{p.balayage-supercal}
Let $\psi \in C(\Omega_T) \cap L^\infty(\Omega_T)$. Then, $\hat R_\psi$ is the minimal supersolution above the obstacle $\psi$ according to Definition~\ref{d.minimal-supercal}.
\end{proposition}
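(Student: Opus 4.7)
By Lemma~\ref{l.balayage-supercal} applied to the bounded lower semicontinuous obstacle $\psi$, the balayage $\hat R_\psi$ already satisfies properties~(i)--(iii) of Definition~\ref{d.minimal-supercal}. Hence the only remaining task is to establish property~(iv), namely that $\hat R_\psi$ is a weak solution to~\eqref{evo_eqn} on the open set $\mathcal{N}:=\{(x,t)\in\Omega_T:\hat R_\psi(x,t)>\psi(x,t)\}$ (openness follows from lower semicontinuity of $\hat R_\psi-\psi$). The strategy is a local Perron/Poisson modification argument: inside a small cylinder in $\mathcal N$, replace $\hat R_\psi$ by a weak solution with matching lower-semicontinuous boundary data, and then use the minimality of $\hat R_\psi$ to identify the two.

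Fix $(x_o,t_o)\in\mathcal{N}$ and choose a $C^{1,\alpha}$-cylinder $Q=B_r(x_o)\times(t_o-r^2,t_o+r^2)\Subset\mathcal{N}$. Since $\hat R_\psi-\psi$ is lower semicontinuous and strictly positive on the compact set $\overline Q$, there exists $\delta>0$ with $\hat R_\psi\geq\psi+\delta$ on $\overline Q$; by further shrinking $Q$ and using continuity of $\psi$, I may also require $\psi\leq\psi(x_o,t_o)+\tfrac{\delta}{4}$ on $\overline Q$, so that $\hat R_\psi\geq\psi(x_o,t_o)+\tfrac{3\delta}{4}$ on $\partial_pQ$. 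Approximate $\hat R_\psi|_{\partial_pQ}$ from below by smooth boundary data $G_k\in C(\overline Q)$ with the regularity required by Theorem~\ref{t.existence}, satisfying $\psi(x_o,t_o)+\tfrac{\delta}{2}\leq G_k|_{\partial_pQ}\nearrow\hat R_\psi|_{\partial_pQ}$. Theorem~\ref{t.existence} then produces continuous weak solutions $h_k\in C(\overline Q)$ to~\eqref{evo_eqn} in $Q$ with $h_k=G_k$ on $\partial_pQ$. Comparison with the constant solution $\psi(x_o,t_o)+\tfrac{\delta}{2}$ yields $h_k\geq\psi(x_o,t_o)+\tfrac{\delta}{2}>\psi$ in $Q$, while Lemma~\ref{l.supersubcal-cylinder-comparison} together with monotonicity of $(G_k)$ gives $h_k\leq h_{k+1}\leq\hat R_\psi$ in $Q$. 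Hence $h:=\lim_kh_k$ satisfies $\psi\leq h\leq\hat R_\psi$ in $Q$, and the uniform energy estimate from Lemma~\ref{l.bounded_caccioppoli} allows passage to the limit in the weak formulation, so $h$ is itself a weak solution to~\eqref{evo_eqn} in $Q$.

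Define the pasted function
\begin{equation*}
\tilde u := \begin{cases} h & \text{in } Q, \\ \hat R_\psi & \text{in } \Omega_T\setminus Q, \end{cases}
\end{equation*}
which satisfies $\psi\leq\tilde u\leq\hat R_\psi$ everywhere in $\Omega_T$. For lower semicontinuity at $(x,t)\in\partial_pQ\cap\Omega_T$, since $h\geq h_k$ with $h_k\in C(\overline Q)$ one has $\liminf_{Q\ni(y,s)\to(x,t)}h(y,s)\geq G_k(x,t)$ for every $k$, hence $\geq\hat R_\psi(x,t)$. To verify the comparison principle of Definition~\ref{d.supercal}\,(iii), let $Q'\Subset\Omega_T$ and $w\in C(\overline{Q'})$ be a weak solution with $w\leq\tilde u$ on $\partial_pQ'$. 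From $w\leq\tilde u\leq\hat R_\psi$ on $\partial_pQ'$ and supercaloricity of $\hat R_\psi$ we get $w\leq\hat R_\psi=\tilde u$ in $Q'\setminus Q$, while on the parabolic boundary of $Q'\cap Q$ one has $\limsup w\leq\liminf h$ (via $w\leq\tilde u=h$ on $\partial_pQ'\cap\overline Q$ and $w\leq\hat R_\psi\leq\liminf_Q h$ on $Q'\cap\partial_pQ$). Lemma~\ref{l.supersubcal-cylinder-comparison} applied in $Q'\cap Q$ then yields $w\leq h=\tilde u$ there, establishing supercaloricity of $\tilde u$ in $\Omega_T$.

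Since $\tilde u$ is a supercaloric function in $\Omega_T$ dominating $\psi$, the minimality of $\hat R_\psi$ (property~(iii), Lemma~\ref{l.balayage-supercal}) forces $\tilde u\geq\hat R_\psi$, and combined with $\tilde u\leq\hat R_\psi$ this gives $\tilde u=\hat R_\psi$ and in particular $\hat R_\psi=h$ in $Q$. Thus $\hat R_\psi$ is a weak solution to~\eqref{evo_eqn} on $Q$, and since $(x_o,t_o)\in\mathcal N$ was arbitrary, a partition-of-unity argument on test functions with compact support in $\mathcal N$ extends this to all of $\mathcal N$, completing the verification of property~(iv). The main technical difficulty I foresee lies in the careful verification of the comparison principle for the pasted function $\tilde u$ across the interface $\partial_pQ$, where one must combine the supercaloricity of $\hat R_\psi$ outside $Q$ with that of $h$ inside $Q$ via the lower semicontinuity control of $h$ at $\partial_pQ$ coming from the monotone construction.
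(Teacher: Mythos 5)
Your overall strategy is exactly the paper's: reduce to property~(iv), perform a Poisson modification on a small cylinder $Q\Subset\{\hat R_\psi>\psi\}$ by approximating the (lower semicontinuous) boundary data of $\hat R_\psi$ monotonically from below, pass to the increasing limit $h$ of the resulting continuous weak solutions, show the pasted function is supercaloric and lies above $\psi$, and invoke minimality of $\hat R_\psi$ to conclude $\hat R_\psi=h$ in $Q$. Steps (i)--(iii), the construction of $h_k$, the bounds $\psi<h_k\le h_{k+1}\le\hat R_\psi$, and the limit passage via the Caccioppoli estimate all match the paper's argument.

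There is, however, one concrete gap: the verification that $\tilde u$ is lower semicontinuous on the \emph{top} slice $B_r(x_o)\times\{t_o+r^2\}$, which lies in $\Omega_T$ but is not part of $\partial_pQ$, so your boundary argument does not cover it. With your definition, $\tilde u=\hat R_\psi$ on that slice, while approaching from within $Q$ the values are $h(y,s)\le\hat R_\psi(y,s)$; you would need $\liminf_{Q\ni(y,s)\to(x,t_o+r^2)}h(y,s)\ge\hat R_\psi(x,t_o+r^2)$, and nothing you have established rules out that $h$ stays strictly below $\hat R_\psi$ near the top (you only learn $h=\hat R_\psi$ \emph{after} supercaloricity of $\tilde u$ is known, so the argument would be circular). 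Theorem~\ref{t.supercal-essliminf} gives $\hat R_\psi(x,t_o+r^2)=\essliminf_{s<t}\hat R_\psi$, which bounds $\liminf h$ from \emph{above}, not below. The paper avoids this by solving the approximating problems on a slightly taller cylinder $U_{t_1,t_2+\delta}$ and defining the Poisson modification to equal $h$ on the half-open set $U\times(t_1,t_2]$, i.e.\ including the top slice in the modified region: then $h$ is continuous across $U\times\{t_2\}$, and lower semicontinuity from above follows from $h\le\hat R_\psi$ there together with lower semicontinuity of $\hat R_\psi$. Your proof should be amended accordingly (and, correspondingly, the comparison-principle check must treat the slice $Q'\cap(B_r(x_o)\times\{t_o+r^2\})$ where the pasted function equals $h$). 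A secondary, minor point: you apply Lemma~\ref{l.supersubcal-cylinder-comparison} with $h$ as the supercaloric function, which requires knowing that the increasing limit $h$ is a \emph{continuous} weak solution; this is where the paper cites \cite[Theorem 18.1, Chapter 6]{DGV}, and you should do the same rather than rely only on the energy estimate.
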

\begin{proof}
Properties (i), (ii) and (iii) in Definition~\ref{d.minimal-supercal} follow directly from Lemma~\ref{l.balayage-supercal}. Thus we are left to prove the property (iv).

We denote $u:= \hat R_\psi$. Clearly, the set $\{u>\psi\}$ is open by lower semicontinuity of $u$ and continuity of $\psi$. Thus for fixed $(x_o,t_o) \in \{u>\psi\}$ we can find $\lambda > 0$, $\delta > 0$ and a $C^{2,\alpha}$-cylinder $U_{t_1,t_2} \Subset \Omega_T$ with $(x_o,t_o)\in U_{t_1,t_2}$ such that
$$
u>\lambda > \psi\quad \text{ in } \overline{U_{t_1,t_2+\delta}}.
$$
Let $\phi_j$ be a sequence of nonnegative Lipschitz functions in
$\overline{U_{t_1,t_2+\delta}}$ with the property $\lambda^m \leq \phi_j \leq
\phi_{j+1}\leq u^m$, and $\lim_{j\to \infty} \phi_j(x,t) = u^m(x,t)$
for all $(x,t) \in \overline{U_{t_1,t_2+\delta}}$. Let $h_j$ be a global weak
solution in $U_{t_1,t_2+\delta}$ taking continuously boundary values
$(\phi_j)^\frac{1}{m}$ on $\partial_p U_{t_1,t_2+\delta}$.
This solution exists according to Theorem~\ref{t.existence}. We use it
to define a Poisson modification
\[ u_P^j := \begin{cases}
h_j\quad &\text{ in } U \times(t_1,t_2] \\
u\quad &\text{ in } \Omega_T \setminus ( U \times (t_1,t_2]).
\end{cases}
\]
Observe that since the boundary values $(\phi_j)^\frac{1}{m}$ are
increasing, Theorem~\ref{t.existence} implies that the sequence
$h_j$ is increasing and thus $u_P^j$ is increasing. Hence, we may define 
$$
u_P := \lim_{j\to \infty} u_P^j.
$$
Since $u$ is bounded, it follows that $h_j$ is uniformly bounded in
$\overline{U_{t_1,t_2+ \delta}}$. Since the sequence $\{h_j\}$ is also
increasing, we may conclude that the limit $h = \lim_{j\to \infty}
h_j$ is a continuous weak solution in $U_{t_1,t_2 + \delta}$ by~\cite[Theorem 18.1, Chapter 6]{DGV} together with Lemma~\ref{l.bounded_caccioppoli}. 
We claim that $u_P$ is a supercaloric function. Clearly $u_P$ is lower
semicontinuous in $ \Omega_T \setminus \partial U_{t_1,t_2}$ by
continuity of $h$ in $U_{t_1,t_2}$ and lower semicontinuity of
$u$. Moreover, $u_P$ is lower semicontinuous on $\partial_p U_{t_1,t_2}$ since $h$ is lower semicontinuous in $\overline U_{t_1,t_2}$ as an increasing limit of continuous functions and on $\overline {\partial_p U_{t_1,t_2}}$ the limit is $u$. On the slice $U \times \{t_2\}$ we have $h \leq u$, which together with lower semicontinuity of $u$ concludes that $u_P$ is lower semicontinuous in $\Omega_T$. Next we show the comparison principle.

Let $Q \Subset \Omega_T$ such that $Q \cap U_{t_1,t_2} \neq \varnothing$
and $Q \setminus U_{t_1,t_2} \neq \varnothing$, and let $\bar h \in
C(\overline{Q})$ be a weak solution in $Q$ with $\bar h \leq u_P$ on
$\partial_p Q$. Since $u_P\le u$ and $u$ is supercaloric, this
immediately implies that $\bar h \leq u$ in $Q$. Now we are left to
show that $\bar h \leq u_P = h$ in $Q \cap U_{t_1,t_2}$ and on the slice $Q \cap (U \times \{t_2\})$. In the former set,
$h$ is a supercaloric function as a continuous weak solution, see
Lemma~\ref{l.weasuper-is-supercal}, and similarly,  $\bar h$ is
subcaloric. Now we are able to use
Lemma~\ref{l.supersubcal-cylinder-comparison} to conclude that also
$\bar h \leq h$ in $Q\cap U_{t_1,t_2}$. Continuity of $h$ and $\bar h$ imply that this also holds on the slice $Q \cap (U \times \{t_2\})$. Altogether we have shown $\bar
h\le  u_P$ in $Q$. Thus $u_P$ is a supercaloric function in
$\Omega_T$.

Since $u_P^j \leq u$ in $\Omega_T$ for every $j \in \N$, it follows
that $u_P \leq u$. Moreover, since $u_P$ satisfies $u_P > \lambda$ on
$\partial_p U_{t_1,t_2}$, we may use the comparison principle in the definition of supercaloric functions to conclude that $u_P > \lambda$ in $U_{t_1,t_2}$.
Therefore, we have that 
$$
u \geq u_P > \lambda > \psi \quad \text{ in } U_{t_1,t_2}.
$$
Since $u = \hat R_\psi \leq v$ in $\Omega_T$ for any $v \in \mathcal U_\psi$, it follows that in particular $u \leq u_P$ in $\Omega_T$. This implies that $u = u_P$, and further that $u$ is a weak solution in $U_{t_1,t_2}$ since $u_P$ is. Since $(x_o,t_o)\in \{u>\psi\}$ was arbitrary, this holds in a neighbourhood of any point in $\{u > \psi\}$, proving property (iv).
\end{proof}

Next we show that for a bounded, compactly supported obstacle the
minimal supercaloric function has zero boundary values. In particular,
it belongs to the appropriate Sobolev space. Here we will assume that
$\Omega$ is a $C^{1,\alpha}$-domain. This is used to ensure the
existence of the Poisson modification by applying
Theorem~\ref{t.existence}.

The following proof is partly based on the ideas in~\cite{LP}.

\begin{lemma} \label{l.minimal-supercal-zerobv}
Let $\Omega$ be a $C^{1,\alpha}$-domain. Let $\psi$ be a bounded, lower semicontinuous obstacle
with compact support in $\Omega_T$.
Then, $\hat R_\psi^m \in L^2(0,T; H^1_0(\Omega))$. Furthermore, there exists $\delta >0$ such that $\hat R_\psi(\cdot,t) \equiv 0$ for every $t \in (0,\delta]$. 
\end{lemma}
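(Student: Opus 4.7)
The argument splits into the two claims.

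\emph{Vanishing for small times.} By the compact support of $\psi$ in $\Omega_T$, fix $\delta > 0$ with $\psi \equiv 0$ on $\Omega \times (0, \delta]$. Starting from the constant $v \equiv \sup_{\Omega_T}\psi$, which lies in $\mathcal{U}_\psi$ as a constant supercaloric function, the restriction of $v$ to $\Omega \times (\delta, T)$ can be viewed, after a time translation, as a supercaloric function on $\Omega \times (0, T-\delta)$; extending it by zero to the past via Lemma~\ref{l.zero-past-extension} and translating back produces
\[
\tilde v(x,t) := \begin{cases} 0 & t \in (0, \delta], \\ v & t \in (\delta, T), \end{cases}
\]
which is supercaloric on $\Omega_T$. (Alternatively, one checks the comparison principle for $\tilde v$ directly by splitting any subcylinder $Q \Subset \Omega_T$ at $t = \delta$, applying the weak maximum principle for the PME on the lower part and the supercaloric comparison for $v$ on the upper part.) Since $\tilde v \geq \psi$ everywhere, we have $\tilde v \in \mathcal{U}_\psi$, and the minimality property from Lemma~\ref{l.balayage-supercal} forces $\hat R_\psi \leq \tilde v \equiv 0$ on $\Omega \times (0, \delta]$.

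\emph{Upper comparison via a regularized obstacle problem.} To control the Sobolev regularity, the plan is to dominate $\hat R_\psi$ by the weak solution of an obstacle problem with smooth obstacle vanishing on $\partial_p \Omega_T$. Since $\psi^m$ is bounded with compact support in $\Omega_T$, pick any $\phi \in C^\infty_0(\Omega_T)$ with $\phi \geq \psi^m$ pointwise (for instance $\phi := (\sup \psi)^m \zeta$ with a smooth cutoff $\zeta \equiv 1$ on $\spt\psi$), and set $\tilde \psi := \phi^{1/m}$. Then $\tilde \psi \geq \psi$, $\tilde \psi$ vanishes on $\partial_p \Omega_T$, $\tilde \psi$ is Lipschitz (since $m < 1$ makes $t \mapsto t^{1/m}$ Lipschitz on bounded intervals), and $\tilde \psi^m = \phi$ satisfies all regularity hypotheses of Theorem~\ref{t.cont-exist}. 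By that theorem, there exists a continuous weak solution $w$ of the obstacle problem with $w = \tilde \psi$ on $\partial_p \Omega_T$, enjoying $w^m - \tilde \psi^m \in L^2(0, T; H^1_0(\Omega))$; since $\tilde \psi^m \in C^\infty_0(\Omega_T)$, this yields $w^m \in L^2(0, T; H^1_0(\Omega))$. Combining Lemma~\ref{l.variationalsol-is-supersol}, the continuity of $w$, and Lemma~\ref{l.weasuper-is-supercal} shows $w$ is supercaloric, hence $w \in \mathcal{U}_\psi$, and by minimality $\hat R_\psi \leq w$ pointwise in $\Omega_T$.

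\emph{Transferring $H^1_0$-regularity.} The remaining step -- and the main technical obstacle -- is to deduce $\hat R_\psi^m \in L^2(0, T; H^1_0(\Omega))$ from the sandwich $0 \leq \hat R_\psi \leq w$ with $w^m \in L^2(0, T; H^1_0(\Omega))$. By Proposition~\ref{p.bdd-supercal-supersol}, $\hat R_\psi$ is a bounded weak supersolution, so Lemma~\ref{l.bounded_caccioppoli} provides $\nabla \hat R_\psi^m \in L^2_\loc$; the continuous vanishing of $w$ at $\partial_p \Omega_T$, together with $\hat R_\psi \leq w$, also gives continuous vanishing of $\hat R_\psi$ at the boundary. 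The key step is to upgrade local to global $L^2$-integrability of $\nabla \hat R_\psi^m$: one tests the weak supersolution inequality for $\hat R_\psi$ against a cutoff built from $w^m$, so that the test function inherits the $H^1_0$-structure, with boundary contributions absorbed by the global $L^2$-bound of $\nabla w^m$. Once the global bound $\nabla \hat R_\psi^m \in L^2(\Omega_T)$ is in place, the pointwise inequality $\hat R_\psi^m \leq w^m$ combined with monotonicity of the trace operator forces the trace of $\hat R_\psi^m$ on $\partial \Omega$ to vanish, giving $\hat R_\psi^m \in L^2(0, T; H^1_0(\Omega))$ and completing the proof.
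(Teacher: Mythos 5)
Your first claim (vanishing for $t\in(0,\delta]$) is handled correctly and in essentially the same way as the paper: the paper extends $\hat R_\psi|_{\Omega\times(\delta,T)}$ itself by zero to the past via Lemma~\ref{l.zero-past-extension} and invokes minimality, while you extend the constant $\sup\psi$; both are valid.

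The second claim, however, contains a genuine gap at the step you yourself flag as ``the main technical obstacle.'' Producing the majorant $w\in\mathcal U_\psi$ with $w^m\in L^2(0,T;H^1_0(\Omega))$ is fine, but the sandwich $0\le \hat R_\psi\le w$ together with $\nabla\hat R_\psi^m\in L^2_{\loc}$ does not yield $\hat R_\psi^m\in L^2(0,T;H^1_0(\Omega))$: one can have $0\le f\le g$ with $g\in H^1_0$, $f\in H^1_{\loc}$ and $\nabla f\notin L^2$ up to the boundary, so the missing global gradient bound must come from the equation, used \emph{globally}. This is precisely what your sketch does not supply. The weak supersolution property of $\hat R_\psi$ is only available against test functions compactly supported in $\Omega$; a ``cutoff built from $w^m$'' is either inadmissible (it does not vanish near $\partial\Omega$, and $(\lambda w^m-\hat R_\psi^m)_+$ is not known to lie in $H^1$ precisely because that is what is being proved --- the argument is circular), or it does vanish near $\partial\Omega$, in which case the Caccioppoli right-hand side contains a term of the form $\iint (M^m-\hat R_\psi^m)^2|\nabla\xi_\eps|^2\,\dx\dt\sim \eps^{-1}$, which blows up, since $M^m-\hat R_\psi^m$ does not degenerate at $\partial\Omega$. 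Note also that testing with $-\alpha\eta^2 h^m$, as in~\eqref{eq:caccio-weak}, requires a \emph{solution} (arbitrary sign of the test function) with zero lateral boundary values; neither property is known for $\hat R_\psi$ at this stage.

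The paper's proof exists exactly to close this gap: it performs a Poisson modification of $\hat R_\psi$ in an outer annulus $(\Omega\setminus\overline D)\times(0,T)$, replacing it there by global weak solutions $h_j$ with zero boundary values on $\partial\Omega\times(0,T)$ furnished by Theorem~\ref{t.existence}, proves that the modification is supercaloric and lies above $\psi$, hence coincides with $\hat R_\psi$ by minimality, and only then applies the global Caccioppoli estimate~\eqref{eq:caccio-weak} to the $h_j$ (legitimate because they are solutions with zero boundary values) and passes to the weak limit to conclude $\hat R_\psi^m\in L^2(0,T;H^1_0(\Omega))$. Some such device --- converting $\hat R_\psi$ near $\partial\Omega$ into an object against which global testing is permitted --- is indispensable, and your proposal would need to incorporate it.
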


\begin{proof}
Let $\delta >0$ such that $\psi(\cdot,t) \equiv 0$ for every $t \in (0,\delta]$. Such $\delta$ exists since $\psi$ has compact support in $\Omega_T$. Let $u$ denote the zero extension of $\hat R_\psi$ to the times $(0,\delta]$, i.e., $u = \hat R_\psi$ whenever $t \in (\delta, T)$ and $u \equiv 0$ when $t \in (0,\delta]$. This implies $u \leq \hat R_\psi$. Furthermore, $u$ is supercaloric in $\Omega_T$ by Lemma~\ref{l.zero-past-extension} and clearly $u \geq \psi$ everywhere in $\Omega_T$, which implies $\hat R_\psi \leq u$. Thus, $u = \hat R_\psi$.

We know that $\hat R_\psi$ is a weak supersolution by Proposition~\ref{p.bdd-supercal-supersol} since $\hat R_\psi \leq \sup_{\Omega_T} \psi < \infty$ everywhere in $\Omega_T$. This implies $\hat R_\psi^m \in L^2_{\loc}(0,T;
H^1_{\loc}(\Omega) )$ and further $\hat R_\psi^m \in L^2(0,T;
H^1_{\loc}(\Omega) )$ by the Caccioppoli inequality for bounded weak
supersolutions, Lemma~\ref{l.bounded_caccioppoli}.
Up next, we show $\hat R_\psi^m \in L^2(0,T; H^1_0(\Omega) )$.

We prove this by using a Poisson modification, and denote $u:=\hat
R_\psi$. Let $D\Subset\Omega$ be a $C^{1,\alpha}$-domain such that 
$\spt(\psi) \subset D \times (0,T)$ and  $(\Omega\setminus
\overline D)\times(0,T)$ is a $C^{1,\alpha}$-cylinder. Furthermore, let $U \subset \R^n$ be a $C^{1,\alpha}$-domain such that $D \Subset U \Subset \Omega$.
Let $\phi_j $ be a sequence of nonnegative Lipschitz functions such
that $\phi_j \leq \phi_{j+1} \leq u^m \chi_{U \times (0,T)}$ and $\phi_j \xrightarrow{j \to
  \infty} u^m$ pointwise everywhere in $U \times (0,T)$. Define Poisson modifications
\[
u_P^j :=
\begin{cases} 
u & \text{ in } \overline{D} \times (0,T),\\
h_j & \text{ in } (\Omega \setminus \overline{D}) \times (0,T),
\end{cases}
\]
in which $h_j$ is a continuous weak solution with boundary values
\[ 
h_j := 
\begin{cases} 
0 (=\phi_j^\frac{1}{m}) & \text{ on } \partial \Omega \times (0,T)\\
0 (=\phi_j^\frac{1}{m}) & \text{ on } (\overline{\Omega} \setminus D ) \times \{0\} \\
\phi_j^\frac{1}{m} & \text{ on } \partial D \times (0,T).
\end{cases}
\]
We claim that $u_P := \lim_{j\to \infty} u_P^j$ is a supercaloric
function with $u_P \leq u$ and $u_P^m \in L^2(0,T;
H^1_0(\Omega))$. Observe that by Theorem~\ref{t.existence} the
sequence $h_j$ is increasing so that the limit $\lim_{j\to \infty}
h_j$ exists.  

Clearly $u_P \leq u$ in $\overline{D} \times (0,T)$ by definition. By Lemma~\ref{l.supersubcal-cylinder-comparison} we have that $u_P^j \leq u$ in $(\Omega \setminus \overline{D}) \times (0,T)$ for every $j\in \N$. This also holds in the limit $j \to \infty$, which implies $u_P \leq u$ everywhere.

To show that $u_P$ is supercaloric, fix a $C^{2,\alpha}$-cylinder $Q_{t_1,t_2} \Subset \Omega_T$ (and suppose it intersects both regions where $U_P$ is defined differently) and let $\bar h \in C(\overline{Q_{t_1,t_2}})$ be a weak solution in $Q_{t_1,t_2}$ such that $\bar h \leq u_P$ on $\partial_p Q_{t_1,t_2}$. This immediately implies that $\bar h \leq u$ in $Q_{t_1,t_2}$ since $u$ is supercaloric, which takes care of the part $Q_{t_1,t_2} \cap [\overline{D} \times (0,T)]$. Then consider $Q_{t_1,t_2} \setminus [\overline{D} \times (0,T)]$. Since we know that $h= \lim_j h_j$ is a continuous weak solution in $(\Omega \setminus \overline{D} ) \times (0,T)$, we have that $\bar h \leq h$ on $\partial_p Q_{t_1,t_2} \cap [(\Omega \setminus \overline{D} ) \times (0,T)]$. Since $h$ is an increasing limit of continuous functions in $(\overline{\Omega} \setminus D ) \times [0,T)$, we have that the limit is lower semicontinuous (and continuous in the interior). This implies that for any point $(x,t) \in \partial D \times [t_1,t_2)$ we have
$$
\liminf_{ (\Omega \setminus \overline{D}) \times (t_1,t_2)  \ni (y,s) \to (x,t)} h(y,s) \geq h(x,t) = u(x,t) \geq \bar h(x,t),
$$
which implies that we can use Lemma~\ref{l.supersubcal-cylinder-comparison} to conclude that also $\bar h \leq h$ in $Q_{t_1,t_2} \setminus [\overline{D} \times (0,T)]$.
In total, we have now that $u_P \leq u$ and $u_P$ is a supercaloric
function in $\Omega_T$.
Also, $u_P \geq \psi$ clearly, which implies $u = u_P$ by minimality of $u$.
Now $u_P$ is a weak solution in $(\Omega\setminus\overline D)\times(0,T)$. 

We are left to show that $u^m \in L^2(0,T; H^1_0(\Omega))$. We know
that $h_j^m(\cdot,t)$ has zero boundary values in Sobolev sense on
$\partial \Omega$ for a.e. $t \in (0,T)$ and $h_j(\cdot,0)
\equiv 0$ for every $j \in \N$. 
Since $h_j \leq \| h\|_\infty < \infty$ for all $j \in \N$, we can derive a
Caccioppoli inequality for weak solutions $h_j$ in the form
\begin{align}\label{eq:caccio-weak}
\iint_{(\Omega \setminus D)\times (0,T)} &\eta^2 |\nabla h_j^m|^2 \, \d x \d t \leq c
\|h\|_{\infty}^{2m} T \int_{\Omega \setminus D} |\nabla \eta|^2 \, \d x 
\end{align}
with a numerical constant $c > 0$ and for any $\eta \in
C^\infty(\overline{\Omega} \setminus \overline{D})$ which vanishes in a neighbourhood of $\partial D$. This can be obtained by testing the mollified weak formulation of $h_j$ with test function $- \alpha(t) \eta^2(x) h_j^m(x,t)$, where $\alpha$ is a cutoff function in time with $\alpha(T) = 0$ (cf. Lemma~\ref{l.bounded_caccioppoli}).
We define $U_\lambda = \{x \in \overline{\Omega} \setminus
\overline{D} : \text{dist} (x, \partial D) > \lambda \}$ for
$\lambda\in(0,\frac14\dist(D,\partial\Omega))$. We apply~\eqref{eq:caccio-weak} with a function
$\eta_\lambda$ such that $\eta_\lambda \equiv 1$ in $U_\lambda$ and
$\eta_\lambda \equiv 0$ in $(\Omega \setminus D) \setminus
U_{\lambda/2}$. With this choice, estimate~\eqref{eq:caccio-weak}
implies that $|\nabla h_j^m|$ is uniformly bounded in $L^2(U_\lambda
\times (0,T))$, which in turn implies that there exists a subsequence
converging weakly, i.e., $\nabla h_j^m \rightharpoonup \nabla h^m$
weakly in $L^2(U_\lambda \times (0,T))$. Furthermore $\eta_{4\lambda}
h_j^m \rightharpoonup \eta_{4\lambda} h^m$ weakly in $L^2(0,T;
H^1(U_\lambda))$. Since $\eta_{4\lambda} h_j^m \in
L^2(0,T;H^1_0(U_\lambda))$, by Hahn-Banach theorem it follows that
also the weak limit satisfies $\eta_{4\lambda} h^m \in L^2(0,T;H^1_0(U_\lambda))$. Since $\eta_{4\lambda} \equiv 1$ in $U_{4\lambda}$, this implies that $u^m = u_P^m \in L^2(0,T;H^1_0(\Omega))$. 
\end{proof}

\section{Connection between the two notions} \label{sec:connection}

Throughout this section we will assume that $\Omega$ is a
$C^{2,\alpha}$-domain. This assumption is made to be able to apply the
duality type proof in the comparison principles used in Theorem~\ref{t.weakvar-is-minimal}.

We will use a comparison principle in a union of space time
cylinders. Let $\left \{U^i_{t_1^i,t_2^i}\right\}_i$ be a finite
collection of open space time cylinders. The lateral boundary of the union of such cylinders is denoted by
$$
\mathcal S \big(\cup_i U^i_{t_1^i,t_2^i} \big) = \Big(\! \cup_i
  \partial U^i\times [t_1^i,t_2^i]  \Big) \setminus \left( \cup_i U^i_{t_1^i,t_2^i} \right).
$$
The tops of the union we denote by
$$
\mathcal T \big( \cup_i U^i_{t_1^i,t_2^i} \big) = \left( \cup_i \overline{U^i} \times \{t_2^i\}\right) \setminus \left( \cup_i U^i_{t_1^i,t_2^i} \right),
$$
and the bottoms by
$$
\mathcal B \big( \cup_i U^i_{t_1^i,t_2^i} \big) = \left( \cup_i \overline{U^i} \times \{t_1^i\}\right) \setminus \left( \cup_i U^i_{t_1^i,t_2^i} \right).
$$

We start with a technical covering lemma.
\begin{lemma} \label{l.covering}
  Let $\Omega\subset\R^n$ be a $C^{2,\alpha}$-domain and
  $S<T$. Consider two sets
  $K,D\subset\Omega\times(S,T)$ such that $K$ has positive
  distance to $D$ and to the lateral boundary
  $\partial\Omega\times[S,T]$. Then there exist $C^{2,\alpha}$-domains
  $U_k\subset\Omega$, $k=1,\ldots,m$, and times
  $S=t_0<t_1<\ldots<t_m=T$ such that
  \begin{equation}\label{cover-D}
    D\subset \bigcup_{k=1}^m U_k\times(t_{k-1},t_k] =:D^F
    \qquad\mbox{and}\qquad  \dist(D^F,K)>0. 
  \end{equation}
  If $D_1\subset D_2\subset\ldots\subset\Omega\times(S,T)$
  is an increasing sequence of sets
  with $\dist(K,D_\ell)>0$ for each $\ell\in\N$, then the
  corresponding coverings can be chosen as an increasing sequence as
  well, i.e. $D_1^F\subset D_2^F\subset\ldots$. 
\end{lemma}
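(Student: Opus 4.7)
The approach is to combine a sufficiently fine time partition with smoothly regularized spatial cross-sections. Let $\rho:=\min\bigl(\dist(K,D),\dist(K,\partial\Omega\times[S,T])\bigr)>0$ and choose a partition $S=t_0<t_1<\ldots<t_m=T$ with mesh $h:=\max_k(t_k-t_{k-1})<\rho/4$. For each $k$, let $D_k\subset\Omega$ be the spatial projection of $D\cap(\Omega\times(t_{k-1},t_k])$ and let $\widetilde K_k\subset\Omega$ be the spatial projection of $K\cap(\Omega\times[t_{k-1}-h,t_k+h])$. For $x\in D_k$ and $y\in\widetilde K_k$, picking witnessing times $t\in(t_{k-1},t_k]$, $s\in[t_{k-1}-h,t_k+h]$ yields $|t-s|\le2h$ together with $\sqrt{|x-y|^2+|t-s|^2}\ge\rho$; hence $\dist(D_k,\widetilde K_k)\ge\sqrt{\rho^2-4h^2}>0$, and similarly $\dist(\widetilde K_k,\partial\Omega)>0$.

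Next I produce a $C^{2,\alpha}$-open set $U_k\subset\Omega$ containing $D_k$ at positive distance from $\widetilde K_k$. Since $\widetilde K_k$ is bounded and at positive distance from $D_k$ and $\partial\Omega$, I enclose it in a smooth closed set $F_k\subset\Omega$ with $F_k\cap D_k=\emptyset$ and $\dist(F_k,\partial\Omega)>0$ by mollifying the characteristic function of a small neighborhood of $\widetilde K_k$ and taking a regular superlevel set (Sard's theorem ensures $\partial F_k$ is $C^\infty$, hence $C^{2,\alpha}$). Setting $U_k:=\Omega\setminus F_k$ makes $\partial U_k=\partial\Omega\sqcup\partial F_k$ a disjoint union of $C^{2,\alpha}$-hypersurfaces, so $U_k$ is a $C^{2,\alpha}$-open subset of $\Omega$. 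For $(x,t)\in U_k\times(t_{k-1},t_k]$ and $(y,s)\in K$, either $s\in[t_{k-1}-h,t_k+h]$ and the spatial separation $|x-y|\ge\dist(U_k,\widetilde K_k)>0$ applies, or $|t-s|\ge h>0$. Taking the minimum over the finitely many $k$ and both cases yields $\dist(D^F,K)>0$, while $D\subset D^F$ is immediate from the definition of $D_k$.

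For the monotone case I proceed inductively over $\ell$. Since $\rho_\ell$ and the level-$\ell$ separation $\sigma^\ell:=\min_k\dist\bigl(U_k^\ell\times(t_{k-1}^\ell,t_k^\ell],K\bigr)$ may shrink with $\ell$, I refine the time partition so that $\{t_k^{\ell+1}\}\supset\{t_k^\ell\}$ and the mesh at level $\ell$ is smaller than both $\rho_\ell/4$ and $\sigma^\ell$. The refinement forces $\widetilde K_j^{\ell+1}\subset\widetilde K_k^\ell\subset F_k^\ell$ whenever the subslab $(t_{j-1}^{\ell+1},t_j^{\ell+1}]$ lies in $(t_{k-1}^\ell,t_k^\ell]$, and the bound by $\sigma^\ell$ preserves the positive distance between $\widetilde K_j^{\ell+1}$ and $U_k^\ell$. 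Hence $F_j^{\ell+1}$ can be chosen as a smooth closed neighborhood of $\widetilde K_j^{\ell+1}$ contained in $F_k^\ell\setminus\overline{D_{\ell+1,j}}$ and bounded away from $\partial\Omega$; this gives $F_j^{\ell+1}\subset F_k^\ell$, hence $U_k^\ell\subset U_j^{\ell+1}$, and so $D_\ell^F\subset D_{\ell+1}^F$. The main obstacle I anticipate is the bookkeeping of the nested inclusions and the distance estimates in this inductive step; the individual smoothing constructions are routine mollification arguments.
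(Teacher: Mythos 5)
Your argument is correct and rests on the same two pillars as the paper's proof: a time partition that is fine compared with $\dist\big(K,D\cup(\partial\Omega\times[S,T])\big)$, and a mollification-plus-Sard argument to produce the $C^{2,\alpha}$ spatial sets. The difference lies in which set you smooth. The paper projects $D$ onto each time slab to get $\widetilde U_k$ and takes $U_k$ to be a smooth sublevel set of a mollification of $x\mapsto\dist(x,\widetilde U_k\cup\partial\Omega)$, i.e.\ a smooth neighbourhood of the projection of $D$ together with $\partial\Omega$; you instead smooth a neighbourhood $F_k$ of the projection $\widetilde K_k$ of $K$ over a slightly enlarged slab and set $U_k:=\Omega\setminus F_k$. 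Both constructions give $D_k\subset U_k$ and a uniform positive lower bound on $\dist\big(U_k\times(t_{k-1},t_k],K\big)$ via the same case distinction ($|t-s|\ge h$ versus $y\in\widetilde K_k$), so your base case is sound. The more substantial divergence is in the monotone case: your explicit induction with refined partitions and nested sets $F_j^{\ell+1}\subset F_k^\ell$ does work (the needed facts $\widetilde K_j^{\ell+1}\subset\widetilde K_k^\ell$ and $\dist(\widetilde K_j^{\ell+1},U_k^\ell)>0$ are available because the refined mesh is no larger), but the bookkeeping you flag as the main obstacle can be avoided entirely: since $\dist(K,D_\ell^F)>0$ is part of the conclusion at step $\ell$, one may simply apply the single-set construction to $D_{\ell+1}\cup D_\ell^F$ in place of $D$, which is exactly what the paper does. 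One cosmetic point shared by both constructions: $U_k$ need not be connected (your $\Omega\setminus F_k$, like the paper's neighbourhood of $\widetilde U_k\cup\partial\Omega$, may have several components), so \emph{domain} has to be read componentwise.
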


\begin{proof}
  We abbreviate $d:=\tfrac13\dist(K,D\cup(\partial\Omega\times[S,T]))>0$.
  For $m:=\lceil \frac{T-S}{d}\rceil\in\N$ we let $t_k:=S+kd$ for
  $k=0,\ldots,m-1$ and $t_m:= T$. Then we define 
  \begin{equation*}
    \widetilde U_k
    :=
    \{x\in\Omega\colon (x,s)\in D\mbox{ for some }s\in(t_{k-1},t_k]\},
  \end{equation*}
  for $k=1,\ldots,m$. We claim that
  \begin{equation}\label{pos-dist-U-tilde}
    \dist\big(K,\widetilde U_k\times(t_{k-1},t_k]\big)\ge 2d>0.
  \end{equation}
  For the proof of this estimate, we consider a point $(x,t)\in
  \widetilde U_k\times(t_{k-1},t_k]$. By
  definition of $\widetilde U_k$, there exists a time $s\in (t_{k-1},t_k]$
  with $(x,s)\in D$. Therefore, for any $z\in K$, we can estimate
  \begin{equation*}
    |(x,t)-z|\ge |(x,s)-z|-|t-s|\ge \dist(K,D)-(t_k-t_{k-1})\ge 3d-d=2d,
  \end{equation*}
  which proves \eqref{pos-dist-U-tilde}.
  Next, we claim that there exist $C^{2,\alpha}$-domains $U_k$ with 
  $\widetilde U_k\subset U_k\subset\Omega$ and
  \begin{equation}\label{pos-dist-U}
    \dist\big(K,U_k\times(t_{k-1},t_k]\big)\ge d>0.
  \end{equation}
  For the construction of these sets, we consider a mollification $g_\ast\in C^\infty(\Omega)$ of
  $g(x):=\dist(x,\widetilde U_k\cup\partial\Omega)$ with
  $\|g-g_\ast\|_{C^0}<\frac d4$. By Sard's theorem~\cite[Theorem 10.1, Chapter 2]{CH-book}, the critical
  values of $g_\ast$ form a zero set. Therefore, for a.e. $r\in(\frac
  d4,\frac{3d}{4})$, the level sets $\{x\in\Omega\colon g_\ast(x)=r\}$
  are smooth submanifolds. We fix a value $r\in(\frac
  d4,\frac{3d}{4})$ with this property and define
  \begin{equation*}
    U_k:=\big\{x\in\Omega\colon g_\ast(x)<r\big\}.
  \end{equation*}
  The boundary $\partial U_k$ is the disjoint union
  of the smooth submanifold $\{x\in\Omega\colon g_\ast(x)=r\}$
  and $\partial\Omega$. Since $\Omega$ is a $C^{2,\alpha}$-domain, the
  sets $U_k$ are $C^{2,\alpha}$-domains as well.
  Moreover, by definition the set $U_k$ is
  contained in the $d$-neighbourhood of $\widetilde U_k\cup\partial\Omega$. Therefore,
  estimate~\eqref{pos-dist-U-tilde} and the definition of $d$ 
  imply~\eqref{pos-dist-U}. 
 This proves claim~\eqref{cover-D} for a single set $D$.

  In the case of an increasing sequence $D_1\subset D_2\subset\ldots$,
  we start by constructing the covering $D_1^F\supset D_1$ in the same
  way as above. Assuming that the coverings $D_1^F\subset
  D_2^F\subset\ldots\subset D_{\ell}^F$ have already been
  chosen for some $\ell\in\N$, we apply the preceding construction to the set $D_{\ell+1}\cup
  D_{\ell}^F$ instead of $D$ to construct a covering
  $D_{\ell+1}^F\supset D_{\ell+1}\cup D_{\ell}^F$ with the asserted
  properties. This procedure leads to an increasing sequence of
  coverings $D_1^F\subset D_2^F\subset\ldots$ such
  that~\eqref{cover-D} is satisfied for $D_\ell^F$ in place of $D^F$.  
\end{proof}

In the following we suppose that $\psi$ is a nonnegative function satisfying 
\begin{equation} \label{a.obstacle_weak}
\psi^m \in L^2(0,T;H^1_0(\Omega)),\quad \partial_t(\psi^m) \in L^\frac{m+1}{m}(\Omega_T),
\end{equation}
and furthermore, $\psi$ is H\"older continuous with compact support in the following sense.
\begin{equation} \label{e.holder-obstacle}
\psi \in C^{0;\beta,\frac{\beta}{2}}_0(\Omega_T)\quad \text{ for some } \beta \in (0,1).
\end{equation}

Assumption \eqref{a.obstacle_weak} guarantees existence
and~\eqref{e.holder-obstacle} continuity of a weak solution to the
obstacle problem with $u=0$ on $\partial_p\Omega_T$ by
Theorem~\ref{t.cont-exist}. Observe that $u$ is bounded in $\Omega_T$
under these assumptions by~\cite[Lemma 2.9]{MSb}.
Note that the condition~\eqref{e.holder-obstacle} was assumed
in~\cite{CS-obstacle,MS} in order to prove that any locally bounded
weak solution is locally (H\"older) continuous as stated in
Theorem~\ref{t.cont-exist}. However, any weaker assumption on the
obstacle $\psi$ instead of~\eqref{e.holder-obstacle} that guarantees
continuity of a weak solution would be sufficient, see
Remark~\ref{rem:thm}.
Existence of the minimal
supersolution above the obstacle is guaranteed by
Proposition~\ref{p.balayage-supercal}, and its uniqueness is clear by
definition.
Then we show that if the obstacle satisfies conditions above, weak solutions to the obstacle problem and minimal supersolutions above the obstacle coincide.

\begin{theorem} \label{t.weakvar-is-minimal}
Let $0<m<1$, $\Omega \Subset \R^n$ be a $C^{2,\alpha}$-domain and $\psi$ an obstacle satisfying~\eqref{a.obstacle_weak}
and~\eqref{e.holder-obstacle}. Then, a weak solution $u$
to the obstacle problem
with $u=0$ on $\partial_p\Omega_T$ in the
sense of Definition~\ref{d.variational-obstacle}
and the minimal supersolution $v$ above
the obstacle in the sense of Definition~\ref{d.minimal-supercal} coincide, i.e., $u = v$ in $\Omega_T$.
\end{theorem}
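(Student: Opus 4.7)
The easy direction $v\le u$ is almost immediate. By Theorem~\ref{t.cont-exist} the weak solution $u$ is continuous; by Lemma~\ref{l.variationalsol-is-supersol} it is a weak supersolution to the PME; and by Lemma~\ref{l.weasuper-is-supercal} combined with continuity, $u=u_\ast$ is supercaloric in $\Omega_T$. Since $u\ge\psi$ pointwise, $u\in\mathcal U_\psi$, and the minimality property of $v=\hat R_\psi$ (Lemma~\ref{l.balayage-supercal}(iii) together with Proposition~\ref{p.balayage-supercal}) yields $v\le u$ throughout $\Omega_T$.

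The substantive direction is $u\le v$. On the contact set $\{u=\psi\}$ we have $u=\psi\le v$ trivially, so it suffices to prove the bound on the open non-contact set $N:=\{u>\psi\}$, where by Lemma~\ref{l.variational-solution-noncontactset}, $u$ is a genuine weak solution of the PME. Heuristically, we want to compare the subcaloric $u$ with the supercaloric $v$ on $N$: on the interior boundary $\partial N\cap\Omega_T$ one has $u=\psi\le v$, and on $\partial_p\Omega_T$ both $u$ and $v$ vanish (using Lemma~\ref{l.minimal-supercal-zerobv} for $v$). Since $N$ is not a cylinder, the plan is to approximate from within by $N_\eps:=\{u>\psi+\eps\}\Subset N$ and invoke the covering Lemma~\ref{l.covering} to cover $\overline{N_\eps}$ (pulled slightly away from $\partial_p\Omega_T$) by a finite union of $C^{2,\alpha}$-cylinders, along which comparison can be propagated forward in time.

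The main obstacle is that on the parabolic boundaries of these cylinders the bound $u\le v$ is only available up to an $\eps$-error, which cannot be absorbed by adding a constant to $v$ because the PME is not invariant under additive shifts. Following~\cite{AvLu}, I would instead use the multiplicative substitute $\lambda v$ with $\lambda>1$: this satisfies the PME up to a source term of order $\lambda^{1-m}-1$, which tends to zero as $\lambda\downarrow 1$. A refined comparison principle of~\cite{AvLu}-type then yields $u\le\lambda v$ on a cylinder minus a compact set, provided $v$ is uniformly bounded away from zero on that compact set. This positivity requirement is precisely where the fast-diffusion dichotomy of Lemma~\ref{lem:alternatives} is decisive: the positivity set of the supercaloric function $v$ has the product form $\Omega\times\Lambda_v$ with $\Lambda_v\subset(0,T)$ a countable union of open intervals, so on any compact sub-cylinder of $\Omega\times\Lambda_i$ for a component $\Lambda_i\subset\Lambda_v$, $v$ is automatically bounded below by a positive constant, and the refined comparison applies.

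Chaining these local comparisons across the covering produced by Lemma~\ref{l.covering} and letting first $\lambda\downarrow 1$ and then $\eps\downarrow 0$ yields $u\le v$ on $N\cap(\Omega\times\Lambda_v)$. Finally, on the complementary time slabs $\Omega\times I$ with $I$ a component of $(0,T)\setminus\Lambda_v$, we have $v\equiv 0$ and hence $\psi\equiv 0$; applying Lemma~\ref{lem:alternatives} once more, this time to the supercaloric function $u$, together with the vanishing of $u$ at the left endpoint of $I$ (inherited either from the initial condition $u(\cdot,0)=\psi_o=0$ if $I$ touches $t=0$, or from the already-established bound $u\le v$ combined with Theorem~\ref{t.supercal-essliminf} at the right endpoint of the preceding positivity component), the standard parabolic comparison on $\Omega\times I$ with zero lateral and initial data forces $u\equiv 0$ there. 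This completes $u\le v$ on all of $\Omega_T$.
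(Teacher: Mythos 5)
Your overall strategy is the right one, and the easy direction $v\le u$ is handled exactly as in the paper. But the hard direction has two genuine gaps. First, you apply the covering Lemma~\ref{l.covering} to the wrong set. If you cover the non-contact set $N_\eps=\{u>\psi+\eps\}$, then on the lateral boundaries of the covering cylinders you only know $u\le\psi+\eps\le v+\eps$, an \emph{additive} error; converting this into the multiplicative bound $u\le\lambda v$ requires $v\ge\eps/(\lambda-1)$ on those boundaries, and no such lower bound is available there --- the lateral boundaries of the covering may approach $\partial\Omega\times(0,T)$ and the endpoints of the positivity intervals, where $v$ tends to $0$. The paper instead covers the \emph{bad} set $D_{\eps,s}=\{u/(1+\eps)\ge v\}$: restricting to a positivity interval of $u$ (not of $v$) gives $u>0$ on the contact set $K$, hence $u/(1+\eps)<u=\psi\le v$ there, so $D_{\eps,s}$ has positive distance to $K$; the covering then avoids $K$ (so $u$ is a weak solution with a small source on it), while its lateral boundary lies outside $D_{\eps,s}$, where the clean inequality $u/(1+\eps)< v$ holds with no error to absorb. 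This reversal is what makes the chaining across the covering work. Second, the ``refined comparison principle of AvLu-type'' cannot simply be cited: \cite{AvLu} treats the slow diffusion range $m>1$, and in the fast diffusion range the principle has to be re-proved. Its proof --- a Holmgren-type duality argument with the backward problem $\partial_t h+(a_k)_\delta\Delta h=0$ on the covering, energy estimates for $\Delta h_k$ and $\nabla h_k$, and the limits $\delta\to0$, $k\to\infty$, $i\to\infty$, $\eps\to0$ --- is the technical core of the theorem and is entirely absent from your outline.

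A smaller point: decomposing time by the positivity set of $v$ rather than of $u$ forces your final step on the slabs where $v\equiv0$, which needs a uniqueness theorem for weak solutions with vanishing Sobolev lateral data and vanishing $L^{m+1}$ initial data that is not among the quoted tools. Decomposing by the positivity set of $u$ makes $u=0\le v$ trivial off that set and at the same time supplies the vanishing datum $u(\cdot,t_1)\equiv0$, which is exactly what is needed to make the bottom boundary term in the comparison estimate disappear in the limit $\tau_1^i\to t_1$.
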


\begin{remark} \label{rem:thm}
In principle, instead of~\eqref{a.obstacle_weak}
and~\eqref{e.holder-obstacle} one can merely assume that $\psi \in
C_0(\Omega_T)$. Then, the result holds whenever a weak solution $u\in C(\Omega_T)$ to the obstacle problem with zero boundary values and such obstacle exists.
\end{remark}

\begin{proof}

We start by proving the result by using a similar approach as in the
proof of~\cite[Theorem 3.1]{AvLu}.
Because of our assumptions~\eqref{a.obstacle_weak}
and~\eqref{e.holder-obstacle}, Theorem~\ref{t.cont-exist} implies that
the weak solution $u$ to the obstacle problem is continuous. Moreover, it is
a weak supersolution by Lemma~\ref{l.variationalsol-is-supersol} and a supercaloric function by
  Lemma~\ref{l.weasuper-is-supercal}. Therefore, and  since we  are
  considering the fast  diffusion range $0<m<1$,
  Lemma~\ref{lem:alternatives} guarantees that the positivity set of
  $u$ is the union of at most countably many sets
  of the form $\Omega^j\times\Lambda_i^j$ for open time intervals $\Lambda_i^j\subset(0,T)$ for each connected component $\Omega^j$ of $\Omega$. We consider the connected components $\Omega^j$ separately and for simplicity we omit $j$ from the superscript.
Let $(t_1,t_2) \subset (0,T)$ be one of these positivity intervals,
such that $u>0$ in $\Omega \times (t_1,t_2)$ and $u(\cdot,t_1) \equiv u(\cdot,t_2) \equiv0$, unless $t_2 = T$. Observe that this implies that $\psi(\cdot,t_1) \equiv \psi(\cdot,t_2) \equiv 0$. 

By Proposition~\ref{p.balayage-supercal}, the minimal supersolution
above the obstacle is given by $v = \hat R_\psi$, which is bounded
since $v \leq \sup_{\Omega_T} \psi < \infty$, and $v^m \in
L^2(0,T;H^1_0(\Omega))$ by Lemma~\ref{l.minimal-supercal-zerobv}. For
$s \in(t_1,t_2)$, let $\tau_1^i$ be a decreasing sequence such that $s>\tau_1^i \to t_1$ as $i \to \infty$. Let us consider $Q_i = \Omega \times (\tau_1^i, s)$  and 
$$
D_{\eps,s}^i = \left\{(x,t)\in Q_i : \frac{u(x,t)}{1+\eps} \geq v(x,t) \right\},
$$
for any fixed $\eps>0$. 
Observe that $Q_i \subset Q_{i+1}$ and $D_{\eps,s}^i \subset
D_{\eps,s}^{i+1}$. Define $K_i := \{(x,t) \in Q_i : u(x,t) = \psi(x,t)
\}$. Observe that in the set $K_i$ we have $0 < u = \psi \leq v$ and
$u$ is a weak solution in $Q_i \setminus K_i$ by
Lemma~\ref{l.variational-solution-noncontactset}. Let us denote
$u_\eps := u/(1+\eps)$. Now the function $u_\eps - v$ is upper
semicontinuous, which implies that $D_{\eps,s}^i$ is closed in $Q_i$,
and there is a positive distance between $D_{\eps,s}^i$ and $K_i$, as
well as between $\mathcal S (\Omega_T)$ and $K_i$. Therefore, for each
fixed $\eps>0$ and $i \in \N$, by Lemma~\ref{l.covering} we find a
collection of a finite number of $C^{2,\alpha}$-cylinders,
denoted by $D_{\eps,s}^{F,i}$, that covers $D_{\eps,s}^i$ and does not
intersect $K_i$. We fix an arbitrary $\eps_o>0$ and a 
  parameter $\eps\in(0,\eps_o]$. For $i\in\N$, we can choose the coverings
  $D_{\eps,s}^{F,i}$ such that $D^{F,1}_{\eps_o,s} \subset
  D^{F,i}_{\eps,s}\subset D^{F,i+1}_{\eps,s}$ for any $i\in\N$.
 
Since $u$ is a weak solution to the PME in $Q_i \setminus K_i$, we have
\begin{align*}
\iint_{Q_i \setminus K_i} -u_{\eps}\partial_t \varphi + \nabla u_{\eps}^m\cdot \nabla \varphi \, \d x\d t &= \iint_{Q_i \setminus K_i} - \tfrac{1}{1+\eps} u\partial_t \varphi + \tfrac{1}{(1+\eps)^m} \nabla u^m\cdot \nabla \varphi \, \d x\d t \\
&= \tfrac{1-(1+\eps)^{m-1}}{(1+ \eps)^m} \iint_{Q_i \setminus K_i}  \nabla u^m\cdot \nabla \varphi \, \d x\d t \\
&= \iint_{Q_i \setminus K_i}  \nabla f\cdot \nabla \varphi \, \d x\d t
\end{align*}
for every $\varphi \in C^\infty_0(Q_i \setminus K_i)$, in which we denoted $f = \tfrac{1-(1+\eps)^{m-1}}{(1+ \eps)^m} u^m$.
We consider an arbitrary nonnegative test function
\begin{equation}\label{choice-test-function}
  \phi\in C^\infty_0\Big(\overline{D_{\eps_o,s}^{F,1}}\cap (\Omega\times\{s\}),\R_{\ge0}\Big),
\end{equation}
which we extend to a function $\phi\in C^\infty(\Omega\times(0,s],\R_{\ge0})$ with
$\phi=0$ on $\Omega\times(0,s]\setminus \overline{D_{\eps_o,s}^{F,1}}$. 
Since $D_{\eps_o,s}^{F,1}\subset D_{\eps,s}^{F,i}$ for any $i\in\N$,
this choice implies in particular $\phi=0$ on $\partial D_{\eps,s}^{F,i}\setminus(\Omega\times\{s\})$. 
From now on, we omit the index $i$ in the superscript.
We consider a function $h = C^{2,1}(\overline{D_{\eps,s}^F},\R_{\geq 0})$ with
the boundary  values
\begin{align*}
  \left\{
  \begin{array}{cl}
  h=\phi&\qquad\mbox{on $\mathcal T(D_{\eps,s}^F)$},\\[1ex]
  h=0&\qquad\mbox{on $\mathcal S(D_{\eps,s}^F)$},
  \end{array}
  \right.
\end{align*}
which will later be chosen as the solution of a dual problem. Note
that as a consequence of the boundary condition and the fact $h\ge0$,
the outer normal derivative satisfies $\partial_n h \leq 0$ on
$\mathcal S(D_{\eps,s}^F)$.

Denote $b = u_\eps - v$. We have that $b \leq 0$ on $\partial D_{\eps,s}^F \cap Q_i$. On $\partial \Omega \times (\tau_1^i, s)$ we have $0 = u = u_\eps  = v$, i.e., $u_\eps^m - v^m = 0$ in the sense of traces. On $\Omega \times \{\tau_1^i\}$ and $\Omega \times \{ s \}$ we do not have information on the sign of $b$, only that $u,u_\eps > 0$ and $v \geq 0$ on these sets. Since $u_\eps$ is a weak solution with source term $f$ and $v$ is a weak supersolution in $D_{\eps,s}^F$, we can subtract the (very) weak formulations and obtain
\begin{align*}
\iint_{D_{\eps,s}^F} \nabla f \cdot \nabla h \, \d x \d t &\geq \int_{\mathcal T(D_{\eps,s}^F)} b \phi \, \d x - \int_{\mathcal B(D_{\eps,s}^F)} bh \, \d x - \iint_{D_{\eps,s}^F} b h_t \, \d x \d t \\
&\phantom{+} - \iint_{D_{\eps,s}^F} (u_\eps^m - v^m) \Delta h \, \d x \d t + \iint_{\mathcal S(D_{\eps,s}^F)} (u_\eps^m - v^m) \partial_n h \, \d \sigma \d t \\
&\geq \int_{\mathcal T(D_{\eps,s}^F)} b \phi \, \d x - \int_{\mathcal B(D_{\eps,s}^F)} bh \, \d x - \iint_{D_{\eps,s}^F} b h_t \, \d x \d t \\
&\phantom{+} - \iint_{D_{\eps,s}^F} (u_\eps^m - v^m) \Delta h \, \d x \d t,
\end{align*}
which can be written as
\begin{align} \label{e.comparison-est}
\int_{\mathcal T(D_{\eps,s}^F)} b \phi \, \d x &\leq \iint_{D_{\eps,s}^F} b(h_t + a \Delta h) \, \d x \d t  + \iint_{D_{\eps,s}^F} \nabla f \cdot \nabla h \, \d x \d t \\
&\phantom{+} + \int_{\mathcal B(D_{\eps,s}^F)} bh \, \d x \nonumber
\end{align}
with
\[ a = \begin{cases} 
      \frac{u_\eps^m - v^m}{u_\eps-v}, & \text{ if } u_\eps \neq v, \\
      0, & \text{ if } u_\eps = v.
   \end{cases}
\]
By defining $a_{k} = \min \{ k, \max\{ a, \frac{1}{k} \} \}$ we have
$0< 1/k \leq  a_{k} \leq k < \infty $, and by $(a_k)_\delta$ we denote
the standard mollification of $a_k$, for a sufficiently small parameter
$\delta=\delta(k)>0$. In this case, to be able to define $(a_k)_\delta$ up to the boundary $\partial_p \Omega_T$, we set $a = 0$ in $\R^{n+1}\setminus \Omega_T$. 
Let $h_k$ be the solution to a backward in time boundary value problem
\begin{equation}\label{eq:dual-problem}
  \left\{
  \begin{array}{ll}
      u_t + (a_k)_\delta \Delta u = 0 & \text{ in } D_{\eps,s}^F, \\
      u = \phi & \text{ on } \mathcal T(D_{\eps,s}^F), \\
      u = 0 & \text{ on } \mathcal S(D_{\eps,s}^F).	
  \end{array}
  \right.
\end{equation}
By the linear theory and the fact that $D_{\eps,s}^F$ can be expressed as in~\eqref{cover-D}, the solution satisfies $h_k \in C^{2,1}(\overline{D_{\eps,s}^F})$.
By Hölder's inequality we obtain 
\begin{align} \label{e.bp-term}
\iint_{D_{\eps,s}^F} &b((h_k)_t + a \Delta h_k) \, \d x \d t = \iint_{D_{\eps,s}^F} b[ a-(a_k)_\delta ]\Delta h_k \, \d x \d t \nonumber \\
&\leq \left( \iint_{D_{\eps,s}^F} b^2 \frac{[(a_k)_\delta - a ]^2}{(a_k)_\delta}\, \d x \d t \right)^\frac{1}{2} \left( \iint_{D_{\eps,s}^F} (a_k)_\delta \left( \Delta h_k \right)^2 \, \d x \d t \right)^\frac{1}{2}.
\end{align}
For the first term on the right-hand side we have 
\begin{align*}
\left\|\frac{b[(a_k)_\delta - a]}{\sqrt{(a_k)_\delta}} \right\|_{L^2(D_{\eps,s}^F)} &\leq \left\|\frac{b[(a_k)_\delta - a_k]}{\sqrt{(a_k)_\delta}} \right\|_{L^2(D_{\eps,s}^F)} + \left\|\frac{b[a_k -a ]}{\sqrt{(a_k)_\delta}} \right\|_{L^2(D_{\eps,s}^F)} \\
&\xrightarrow{\delta \to 0}\left\|\frac{b[a_k -a ]}{\sqrt{a_k}} \right\|_{L^2(D_{\eps,s}^F)}, 
\end{align*}
since $(a_k)_\delta \to a_k$ a.e.,
$\frac{1}{k} \leq (a_k)_\delta \leq k$, $b\in L^2(D_{\eps,s}^F)$ and by using the dominated convergence theorem. We further estimate 
\begin{align*}
\iint_{D_{\eps,s}^F} b^2 \frac{(a_k - a )^2}{a_k}\, \d x \d t &= \iint_{D_{\eps,s}^F \cap \{a < \frac{1}{k}\}} b^2 \frac{(\tfrac{1}{k} - a )^2}{\tfrac{1}{k}}\, \d x \d t \\
&\phantom{+} + \iint_{D_{\eps,s}^F \cap \{a > k\}} b^2 \frac{(a-k )^2}{k}\, \d x \d t \\
&\leq \tfrac{1}{k} \iint_{D_{\eps,s}^F \cap \{a < \frac{1}{k}\}} b^2
\, \d x \d t \\
&\phantom{+} + \tfrac{1}{k} \iint_{D_{\eps,s}^F \cap \{a > k\}} (u_\eps^m - v^m)^2 \, \d x \d t \\
&\le \frac{c_o}{k}
\end{align*}
for a constant $c_o = c_o(m,\|u\|_{L^2(\Omega_T)},\|v\|_{L^2(\Omega_T)})>0$. 
Because of the two preceding estimates, by choosing $\delta>0$ small enough in dependence on $k$,
we achieve
\begin{equation*}
\iint_{D_{\eps,s}^F} b^2 \frac{[(a_k)_\delta - a ]^2}{(a_k)_\delta}\,
\d x \d t
\le
\frac{2c_o}{k}.
\end{equation*}
By using the property that $h_k$ solves the boundary value problem~\eqref{eq:dual-problem}, we have 
\begin{align*}
\iint_{D_{\eps,s}^F} &(a_k)_\delta (\Delta h_k)^2 \, \d x \d t = - \iint_{D_{\eps,s}^F}  (h_k)_t \Delta h_k \, \d x \d t \\
&= \iint_{D_{\eps,s}^F}  h_k (\Delta h_k)_t \, \d x \d t - \int_{\mathcal T(D_{\eps,s}^F)} \phi \Delta \phi \, \d x + \int_{\mathcal B(D_{\eps,s}^F)} h_k \Delta h_k \, \d x \\
&= \iint_{D_{\eps,s}^F}  h_k \Delta (h_k)_t \, \d x \d t + \int_{\mathcal T(D_{\eps,s}^F)} |\nabla \phi|^2 \, \d x - \int_{\mathcal B(D_{\eps,s}^F)} |\nabla h_k|^2 \, \d x \\
&\leq \iint_{D_{\eps,s}^F}  (h_k)_t \Delta  h_k  \, \d x \d t - \iint_{\mathcal S(D_{\eps,s}^F)} (h_k)_t \partial_n h_k \, \d \sigma \d t + \int_{\mathcal T(D_{\eps,s}^F)} |\nabla \phi|^2 \, \d x \\
&= - \iint_{D_{\eps,s}^F} (a_k)_\delta (\Delta h_k)^2 \, \d x \d t + \int_{\mathcal T(D_{\eps,s}^F)} |\nabla \phi|^2 \, \d x
\end{align*}
by integrating by parts in time and space, and using the fact that $h_k \in C^{2,1}(\overline{D_{\eps,s}^F})$ together with the boundary conditions. This implies
$$
\iint_{D_{\eps,s}^F} (a_k)_\delta (\Delta h_k)^2 \, \d x \d t
\leq
\frac12 \int_{\mathcal T(D_{\eps,s}^F)} |\nabla \phi|^2 \, \d x
=
\frac12 \int_{\mathcal{T}(D_{\eps_o,s}^{F,1})\cap(\Omega\times\{s\})} |\nabla \phi|^2 \, \d x,
$$
by the choice  of $\phi$. Collecting the previous results
in~\eqref{e.bp-term}, we deduce  
\begin{equation*}
  \iint_{D_{\eps,s}^F} b((h_k)_t + a \Delta h_k) \, \d x \d t
  \le
  \frac{c_1}{\sqrt{k}},
\end{equation*}
for a constant $c_1 = c_1(m,\|u\|_{L^2(\Omega_T)},\|v\|_{L^2(\Omega_T)}, \|\nabla \phi\|_{L^2\left(\Omega \times \{s\} \right)}) > 0$. 
For the term with $f$ in~\eqref{e.comparison-est}, we estimate
$$
\iint_{D_{\eps,s}^F} \nabla f \cdot \nabla h_k \, \d x \d t \leq \frac{ 1 -(1+\eps)^{m-1} }{(1+\eps)^m} \|\nabla u^m\|_{L^2(\Omega_T)} \left( \iint_{D_{\eps,s}^F} |\nabla h_k|^2 \, \d x \d t \right)^\frac{1}{2}.
$$
By using the fact that $h_k$ is a solution to the linear boundary value problem, with a cutoff-function $\alpha= \alpha (t)$ with properties $\alpha(0) = 1/2$, $\alpha(s) = 1$ and $\alpha' = \frac{1}{2s}$ we obtain
\begin{align*}
0 &= \iint_{D_{\eps,s}^F}  \alpha (h_k)_t \Delta h_k \, \d x \d t + \iint_{D_{\eps,s}^F} \alpha (a_k)_\delta (\Delta h_k)^2  \, \d x \d t \\
&= - \iint_{D_{\eps,s}^F} \alpha \nabla (h_k)_t \cdot \nabla h_k \, \d x \d t + \iint_{D_{\eps,s}^F} \alpha (a_k)_\delta (\Delta h_k)^2 \, \d x \d t \\
&= - \frac{1}{2} \iint_{D_{\eps,s}^F} \alpha  (|\nabla h_k|^2)_t \, \d x \d t + \iint_{D_{\eps,s}^F} \alpha (a_k)_\delta (\Delta h_k)^2 \, \d x \d t \\
&= \frac{1}{2} \iint_{D_{\eps,s}^F} \alpha' |\nabla h_k|^2 \, \d x \d t - \frac12 \int_{\mathcal T(D_{\eps,s}^F)} \alpha |\nabla h_k|^2 \, \d x \\
&\phantom{+} + \frac12 \int_{\mathcal B(D_{\eps,s}^F)} \alpha |\nabla h_k|^2 \, \d x + \iint_{D_{\eps,s}^F} \alpha (a_k)_\delta (\Delta h_k)^2 \, \d x \d t \\
&\geq \frac{1}{4s} \iint_{D_{\eps,s}^F} |\nabla h_k|^2 \, \d x \d t - \frac12 \int_{\mathcal T(D_{\eps,s}^F)} |\nabla\phi|^2 \, \d x,
\end{align*}
which implies 
$$
\left( \iint_{D_{\eps,s}^F} |\nabla h_k|^2 \, \d x \d t \right)^\frac12 \leq  (2s)^\frac12 \left( \int_{\mathcal T(D_{\eps,s}^F)} |\nabla\phi|^2 \, \d x \right)^\frac12.
$$
Therefore, we arrive at the  bound
\begin{align*}
\iint_{D_{\eps,s}^F} \nabla f \cdot \nabla h_k \, \d x \d t \leq (2s)^\frac12 \frac{1-(1+\eps)^{m-1}}{(1+\eps)^m} \|\nabla u^m\|_{L^2(\Omega_T)} \left( \int_{\mathcal T(D_{\eps,s}^F)} |\nabla \phi|^2 \, \d x \right)^\frac12.
\end{align*}
Now, taking also the indices $i$ into account and recalling the fact $b \leq 0$ on $\partial D_{\eps,s}^F \cap Q_i$, we can write~\eqref{e.comparison-est} in the form
\begin{align*}
\int_{\mathcal T(D_{\eps,s}^{F,i})} (u_\eps -v)\phi \, \d x &\leq \frac{c_1}{\sqrt{k}} + (2s)^\frac12 \frac{1-(1+\eps)^{m-1}}{(1+\eps)^m} \left( \int_{\mathcal T(D_{\eps,s}^{F,i})} |\nabla \phi|^2 \, \d x \right)^\frac12 \\
&\phantom{+} + \int_{\mathcal B(D_{\eps,s}^{F,i})\cap (\Omega \times \{\tau_1^i\})} (u_\eps - v)h_k \, \d x. 
\end{align*}
Observe that 
\begin{align*}
\int_{\mathcal B(D_{\eps,s}^{F,i})\cap (\Omega \times \{\tau_1^i\})} (u_\eps - v)h_k \, \d x &\leq \int_{\mathcal B(D_{\eps,s}^{F,i})\cap (\Omega \times \{\tau_1^i\})} (u_\eps-v)_+ h_k \, \d x \\
&\leq \|\phi\|_{L^\infty(\mathcal T(D_{\eps,s}^{F,i}))} \int_{\Omega \times \{\tau_1^i\}} (u_\eps-v)_+ \, \d x
\end{align*}
by using the comparison principle for $h_k$. 
Recall that we chose $\phi$ with the property $\phi=0$ in
$\Omega\times(0,s]\setminus \overline{D_{\eps_o,s}^{F,1}}$.
Therefore, after passing to the limit $k \to \infty$ we infer the bound
\begin{align*}
\int_{\overline{D_{\eps_o,s}^{F,1}}\times(\Omega\times\{s\})} (u_\eps -v)\phi \, \d x &\leq (2s)^\frac12 \frac{1-(1+\eps)^{m-1}}{(1+\eps)^m} \left( \int_{\mathcal T(D_{\eps_o,s}^{F,1})} |\nabla \phi|^2 \, \d x \right)^\frac12 \\
&\phantom{+} + \|\phi\|_{L^\infty(\mathcal T(D_{\eps_o,s}^{F,1}))} \int_{\Omega \times \{\tau_1^i\}} (u-v)_+ \, \d x.
\end{align*}
By passing to the limit $i \to \infty$, the last term
vanishes, since $u \in C([0,T];L^{m+1}(\Omega))$ and
$(u-v)_+(\cdot,\tau_1^i)\le u(\cdot,\tau_1^i)\to 0$ as 
$\tau_1^i\to t_1$. 
In the resulting inequality, we let $\eps\downarrow0$ and conclude
$$
\int_{\overline{D_{\eps_o,s}^{F,1}} \cap (\Omega \times \{s\})} (u-v)\phi \, \d x \leq 0
$$
for every test function $\phi$ as in~\eqref{choice-test-function},   
i.e. that $u \leq v$ a.e. on $\overline{D_{\eps_o,s}^{F,1}} \cap (\Omega
\times \{s\})$.
In particular, on the set $\overline{D_{\eps_o,s}^{1}} \cap (\Omega
\times \{s\})$ we have $0<u\le v\le \frac{u}{1+\eps_o}$, which is not
possible. Therefore, this set must be empty
for every $\eps_o>0$, which implies $u\le v$ a.e. on $\Omega\times\{s\}$.
Since $s\in (t_1,t_2)$ is arbitrary, we obtain $u \leq v$ a.e in $\Omega
\times (t_1,t_2)$. Since $u$ is continuous and $v = \hat R_\psi$ is a supercaloric function, Theorem~\ref{t.supercal-essliminf} implies that $u \leq v$ everywhere in $\Omega \times (t_1,t_2)$.
This can be deduced in any connected component $\Omega \times
\Lambda_i$ of the  positivity set of $u$. Outside these sets we
trivially have $u = 0 \leq v$, which implies that $u \leq v= \hat R_\psi$ everywhere in $\Omega_T$. Since $u$ itself is a continuous weak supersolution, and thus a supercaloric function by Lemma~\ref{l.weasuper-is-supercal} satisfying $u \geq \psi$, it follows that $R_\psi \leq u$ everywhere. Now we have $u \leq \hat R_\psi \leq R_\psi \leq u$ everywhere, which concludes the result for each connected component of $\Omega$. Thus the claim follows.
\end{proof}

From the proof of Theorem~\ref{t.weakvar-is-minimal} we can extract the following comparison result, cf.~\cite[Theorem 3.1]{AvLu}.

\begin{proposition}
Suppose that $0<m<1$. Let $K \Subset \Omega_T$ be a compact set and $\Omega \Subset \R^n$ be a $C^{2,\alpha}$-domain. Suppose that $u \in C(\Omega_T)$ is a supercaloric function in $\Omega_T$ such that $u$ is a weak solution in $\Omega_T \setminus K$ with $u^m \in L^2(0,T;H^1(\Omega)) \cap L^\frac{2}{m}(\Omega_T)$ and taking initial values $u_o$ in $L^1$-sense. Let $v$ be a supercaloric function in $\Omega_T$ satisfying $v^m\in L^2(0,T;H^1(\Omega)) \cap L^\frac{2}{m}(\Omega)$ and taking the initial values $v_o$ in $L^1$-sense. If $u_o \leq v_o$ a.e. on $\Omega$, $u^m(\cdot,t) \leq v^m(\cdot,t)$ on $\partial \Omega$ in the sense of $H^1$-trace for a.e. $t \in (0,T)$ and $u \leq v$ on $K´$, then $u \leq v$ everywhere in $\Omega_T$.
\end{proposition}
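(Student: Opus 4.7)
The proof follows the strategy of Theorem~\ref{t.weakvar-is-minimal}. By Lemma~\ref{lem:alternatives} applied to $u$, on each connected component of $\Omega$ the positivity set of $u$ is a countable union of cylinders $\Omega\times\Lambda$; outside these sets $u\equiv 0\le v$ and the claim is trivial. It suffices therefore to fix one positivity interval $\Lambda=(t_1,t_2)$, a time $s\in(t_1,t_2)$, and to prove $u(\cdot,s)\le v(\cdot,s)$ a.e. on $\Omega$. The continuity of $u$ together with Theorem~\ref{t.supercal-essliminf} applied to $v$ then upgrade this to the pointwise inequality everywhere in $\Omega_T$.

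For the slice estimate, set $u_\eps:=u/(1+\eps)$ and fix a sequence $\tau_1^i\downarrow t_1$ with $\tau_1^i<s$. On $Q_i:=\Omega\times(\tau_1^i,s)$ put
\[
  D_{\eps,s}^i:=\bigl\{(x,t)\in Q_i\colon u_\eps(x,t)\ge v(x,t)\bigr\}.
\]
Since $u>0$ on $\Omega\times(t_1,t_2)$ by Lemma~\ref{lem:alternatives} and $u\le v$ on $K$ by hypothesis, we have $u_\eps-v<0$ strictly on $K\cap\overline{Q_i}$. Upper semicontinuity of $u_\eps-v$ (from continuity of $u$ and lower semicontinuity of $v$) and compactness then yield a positive distance between $D_{\eps,s}^i$ and $K$, while $K\Subset\Omega_T$ guarantees positive distance to the lateral boundary $\partial\Omega\times[\tau_1^i,s]$. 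Lemma~\ref{l.covering} therefore produces a finite union $D_{\eps,s}^{F,i}\supset D_{\eps,s}^i$ of $C^{2,\alpha}$-cylinders that avoids $K$, and on this covering $u$ is a weak solution in $\Omega_T\setminus K$, so $u_\eps$ satisfies the porous medium equation with source $\nabla f\cdot\nabla\cdot$, $f=\tfrac{1-(1+\eps)^{m-1}}{(1+\eps)^m}u^m$, while $v$ remains a weak supersolution.

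We now run the duality argument of Theorem~\ref{t.weakvar-is-minimal} verbatim: fix a reference parameter $\eps_o>0$, arrange $D_{\eps_o,s}^{F,1}\subset D_{\eps,s}^{F,i}$ for all $\eps\in(0,\eps_o]$ and $i\in\N$, take a nonnegative $\phi\in C^\infty_0\bigl(\overline{D_{\eps_o,s}^{F,1}}\cap(\Omega\times\{s\})\bigr)$, solve the backward dual problem~\eqref{eq:dual-problem} for $h_k$, and derive the identity~\eqref{e.comparison-est}. The lateral boundary term on $\partial U_k\times(t_{k-1},t_k)$ vanishes on the interior portions (where $h_k=0$) and has the correct sign to be discarded on the portions lying on $\partial\Omega$: the $H^1$-trace hypothesis gives $u_\eps^m-v^m\le u^m-v^m\le0$, while the maximum principle for $h_k$ forces $\partial_n h_k\le0$. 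Passing $k\to\infty$ removes the term with $(h_k)_t+a\Delta h_k$ via the same $L^2$-duality bound as in the theorem; sending $i\to\infty$ kills the bottom contribution $\int_{\Omega\times\{\tau_1^i\}}(u_\eps-v)_+\,\d x$, which is controlled by $\int_\Omega(u_o-v_o)_+\,\d x=0$ via the $L^1$-initial assumption when $t_1=0$, and by continuity together with $u(\cdot,t_1)\equiv0$ when $t_1>0$; finally $\eps\downarrow0$ kills the source term through $1-(1+\eps)^{m-1}\to0$. The resulting inequality $\int_{\Omega\times\{s\}}(u-v)\phi\,\d x\le 0$ for every admissible $\phi$ forces $\overline{D_{\eps_o,s}^{1}}\cap(\Omega\times\{s\})$ to have measure zero exactly as in the theorem, and letting $\eps_o\downarrow0$ yields $u\le v$ a.e. on $\Omega\times\{s\}$.

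The main obstacle is ensuring the positive distance between $D_{\eps,s}^i$ and $K$; this is precisely where the hypothesis $u\le v$ on $K$ enters the argument, replacing the role played by the contact set in the proof of Theorem~\ref{t.weakvar-is-minimal}. A secondary subtlety is the treatment of the bottom term for positivity intervals with $t_1>0$, which relies on the structural dichotomy in Lemma~\ref{lem:alternatives} forcing $u(\cdot,t_1)\equiv 0$ by continuity.
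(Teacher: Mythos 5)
Your proposal is correct and follows essentially the same route as the paper, which gives no separate proof of this proposition but states that it is extracted from the proof of Theorem~\ref{t.weakvar-is-minimal}; you correctly identify the three substitutions needed (the compact set $K$ playing the role of the contact set, the $H^1$-trace inequality replacing the zero lateral boundary values, and the $L^1$-initial data controlling the bottom term when $t_1=0$, with the dichotomy of Lemma~\ref{lem:alternatives} handling $t_1>0$). One cosmetic slip: on the interior portions of $\mathcal S(D^{F,i}_{\eps,s})$ the boundary integral $\iint (u_\eps^m-v^m)\partial_n h_k\,\d\sigma\d t$ is discarded by the same sign argument as on $\partial\Omega$ (namely $u_\eps<v$ off $D^{i}_{\eps,s}$ and $\partial_n h_k\le0$), not because $h_k$ itself vanishes there.
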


As an immediate consequence of Theorem~\ref{t.weakvar-is-minimal} we can conclude the following comparison principle for weak solutions to the obstacle problem.

\begin{corollary}
Let $\Omega$ be a $C^{2,\alpha}$-domain and $\psi_1$ and $\psi_2$ be obstacles with
compact support in $\Omega_T$ satisfying~\eqref{a.obstacle_weak}
and~\eqref{e.holder-obstacle}, and let $u_1$ and $u_2$ be corresponding weak solutions to the obstacle problems with zero boundary values on $\partial_p \Omega_T$ in the sense of Definition~\ref{d.variational-obstacle}. If $\psi_1 \leq \psi_2$, then $u_1 \leq u_2$.
\end{corollary}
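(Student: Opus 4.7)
The plan is to reduce the statement to a monotonicity property of the balayage with respect to the obstacle, combining the identification proved in Theorem~\ref{t.weakvar-is-minimal} with the definition of $\hat R_\psi$. Since the theorem says that, under the hypotheses in force, the weak solution to the obstacle problem coincides with the minimal supersolution above the obstacle, and since the latter is realized as $\hat R_\psi$ by Proposition~\ref{p.balayage-supercal}, it suffices to show that $\psi_1\le\psi_2$ implies $\hat R_{\psi_1}\le \hat R_{\psi_2}$ pointwise in $\Omega_T$.

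The first step is to verify that Theorem~\ref{t.weakvar-is-minimal} applies to both obstacles: the hypotheses~\eqref{a.obstacle_weak} and~\eqref{e.holder-obstacle} are assumed for $\psi_1$ and $\psi_2$, and $\Omega$ is a $C^{2,\alpha}$-domain, so the theorem yields $u_i=\hat R_{\psi_i}$ everywhere in $\Omega_T$ for $i=1,2$. The second step is the monotonicity of the reduction: if $v$ is a supercaloric function with $v\ge\psi_2$ everywhere in $\Omega_T$, then $v\ge\psi_2\ge\psi_1$ everywhere, so $v\in\mathcal U_{\psi_1}$. Hence $\mathcal U_{\psi_2}\subset\mathcal U_{\psi_1}$, and taking infima over these admissible classes gives $R_{\psi_1}\le R_{\psi_2}$ pointwise in $\Omega_T$.

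The third step is to pass to the lower semicontinuous regularization. From $R_{\psi_1}\le R_{\psi_2}$ it follows that
\begin{equation*}
  \hat R_{\psi_1}(x,t)
  =\lim_{\rho\to 0}\Big(\inf_{B_\rho(x)\times(t-\rho^2,t+\rho^2)}R_{\psi_1}\Big)
  \le\lim_{\rho\to 0}\Big(\inf_{B_\rho(x)\times(t-\rho^2,t+\rho^2)}R_{\psi_2}\Big)
  =\hat R_{\psi_2}(x,t)
\end{equation*}
for every $(x,t)\in\Omega_T$. Combining the three steps, $u_1=\hat R_{\psi_1}\le\hat R_{\psi_2}=u_2$ everywhere in $\Omega_T$, which is the claim.

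There is essentially no technical obstacle: the entire content of the corollary is encoded in Theorem~\ref{t.weakvar-is-minimal}, and the remaining argument is the trivial monotonicity of an infimum with respect to enlarging the set of constraints, together with the equally trivial monotonicity of the $\essliminf$-type regularization. The only mild bookkeeping point is to make sure that the regularization step is applied pointwise rather than only almost everywhere, but this is immediate from the formula defining $\hat R_\psi$.
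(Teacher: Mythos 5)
Your argument is correct and is precisely the ``immediate consequence'' the paper has in mind (the paper gives no written proof of this corollary): identify $u_i=\hat R_{\psi_i}$ via Theorem~\ref{t.weakvar-is-minimal} and Proposition~\ref{p.balayage-supercal}, then use $\mathcal U_{\psi_2}\subset\mathcal U_{\psi_1}$ and monotonicity of the lower semicontinuous regularization. One could shortcut the last two steps by invoking property (iii) of Definition~\ref{d.minimal-supercal} directly ($u_2$ is supercaloric and lies above $\psi_2\ge\psi_1$, hence above the minimal supersolution $u_1$), but this is the same argument in substance.
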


\appendix

\section{On the notions of variational solution and weak solution to the obstacle problem} \label{appendix-a}

In Theorem~\ref{t.existence} we use an existence result for the
obstacle problem. In order to cover the full fast diffusion range
(i.e., also the subcritical range) we apply the result given
in~\cite{Schaetzler2}. The aforementioned existence result is
established for the notion of variational solution (see
Definition~\ref{d.obstacle-varsol}). Here we show that a variational solution is also a weak solution to the obstacle problem according to Definition~\ref{d.variational-obstacle} provided that the obstacle is regular enough.

Suppose that the obstacle $\psi$ satisfies
\begin{equation} \label{eq:g-cond}
\begin{aligned}
&\psi^m \in L^2(0,T;H^1(\Omega)),\quad \partial_t \psi^m \in L^\frac{m+1}{m}(\Omega_T), \\ 
&\psi \in C([0,T];L^{m+1}(\Omega)),\quad \text{ and } \quad \psi_o^m := \psi^m(\cdot,0) \in L^{\frac{m+1}{m}}(\Omega) \cap H^1(\Omega).
\end{aligned}
\end{equation}
For a variational solution $u$, suppose
\begin{equation} \label{eq:u-class}
u \in L^\infty(0,T; L^{m+1}(\Omega)), \quad u^m \in L^2(0,T;H^1(\Omega)),\quad  u\geq \psi \, \text{ a.e. in } \Omega_T,
\end{equation}
and that it attains the same boundary and initial values as $\psi$ in
the sense 
\begin{equation} \label{eq:boundary-data}
u^m - \psi^m \in L^2(0,T;H^1_0(\Omega)), \quad 
\lim_{h\to 0} \bint_0^h \int_\Omega |u(x,t) - \psi_o|^{m+1} \, \d x \d t = 0.
\end{equation}

Denote 
\begin{align*}
I(u,v) &:= \tfrac{1}{m+1}\left( u^{m+1} - v^{m+1} \right) - v^m (u-v) \\
&=
\tfrac{1}{m+1} u^{m+1} + \tfrac{m}{m+1} v^{m+1} -  v^mu.
\end{align*}
Now we recall the notion of solution used in \cite{Schaetzler2}.
\begin{definition}\label{d.obstacle-varsol}
Let $\psi$ satisfy~\eqref{eq:g-cond}. A measurable map $u \colon \Omega_T \to \R_{\geq0}$ satisfying~\eqref{eq:u-class} and~\eqref{eq:boundary-data}
is called a variational solution to the obstacle problem for the PME if 
\begin{align}\label{var-ineq-appendix}
\frac12 \iint_{\Omega_\tau} |\nabla u^m|^2 \, \d x \d t 
&\leq 
\iint_{\Omega_\tau} \partial_t v^m(v - u) \, \d x \d t + \frac12 \iint_{\Omega_\tau}|\nabla v^m|^2 \, \d x \d t \\\nonumber
&\phantom{+} - \int_{\Omega} I(u(\tau),v(\tau))\, \d x +\int_{\Omega} I (\psi_o,v(0)) \, \d x \nonumber
\end{align}
for a.e. $\tau \in [0,T]$ and every comparison map $v$ satisfying $v^m \in \psi^m + L^2(0,T; H^1_0(\Omega))$ with $\partial_t v^m \in L^1(0,T;L^\frac{m+1}{m}(\Omega))$, $v(0) \in L^{m+1}(\Omega)$ and $v \geq \psi$ a.e. in $\Omega_T$.
\end{definition}

As an intermediate step, we will prove that every variational solution
is a solution in the following sense. 

\begin{definition} \label{d.obstacle-wsol}
Let $\psi$ satisfy~\eqref{eq:g-cond}. A measurable map $u \colon \Omega_T \to \R_{\geq0}$ satisfying~\eqref{eq:u-class} and~\eqref{eq:boundary-data}$_1$
is called a weak solution to the obstacle problem for the PME if 
\begin{equation}\label{global-var-ineq}
\llangle \partial_t u , \alpha (v^m-u^m) \rrangle_{\psi_o} +
\iint_{\Omega_T} \alpha \nabla u \cdot \nabla \left(v-u \right) \, \d
x \d t \geq 0
\end{equation}
for every cutoff function $\alpha\in C^{0,1}([0,T]; \R_{\geq 0})$ with $\alpha(T) = 0$ and all comparison maps $v$ satisfying $v^m \in \psi^m + L^2(0,T; H^1_0(\Omega))$ with $\partial_t v^m \in L^\frac{m+1}{m}(\Omega_T)$, $v \in C([0,T];L^{m+1}(\Omega))$ and $v \geq \psi$ a.e. in $\Omega_T$. We denoted

\begin{align*}
\llangle \partial_t u, \alpha (v^m-u^m) \rrangle_{\psi_o} := &\iint_{\Omega_T}  \left[ \alpha' \left( \tfrac{1}{m+1}u^{m+1} -  u v^m \right) - \alpha u \partial_t v^m \right] \, \d x \d t \\
&\phantom{=} + \alpha(0) \int_\Omega  \left[ \tfrac{1}{m+1} \psi_o^{m+1} - \psi_o v^m(\cdot,0) \right] \, \d x  .
\end{align*}
\end{definition}

By a simple change of variables ($q = \frac{1}{m}$)~\cite[Theorem
1.2]{Schaetzler2} implies that solutions according to
Definition~\ref{d.obstacle-varsol} exists. Then, we show that the
solution for which the existence is guaranteed, is actually a solution according to Definition~\ref{d.obstacle-wsol} and finally, according to Definition~\ref{d.variational-obstacle}.

\begin{lemma} \label{lem:varsol-is-weaksol}
Let $u$ be a variational solution according to
Definition~\ref{d.obstacle-varsol}. Then $u$ is a weak solution to the
obstacle problem according to Definition~\ref{d.obstacle-wsol}.
\end{lemma}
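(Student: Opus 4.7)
The strategy is to derive the weak inequality~\eqref{global-var-ineq} from the global variational inequality~\eqref{var-ineq-appendix} by testing the latter with a one-parameter family of comparison maps interpolating between $u$ and $v$, Taylor-expanding in the parameter, and passing to a limit. Given $v$ in the comparison class of Definition~\ref{d.obstacle-wsol} and a cutoff $\alpha \in C^{0,1}([0,T],\R_{\ge0})$ with $\alpha(T) = 0$, for small parameters $s,h>0$ I would consider
$$v_{s,h}^m := \llbracket u^m \rrbracket_h + s\alpha(v^m - \llbracket u^m \rrbracket_h) + \sigma_h,$$
where $\llbracket u^m \rrbracket_h$ is the time mollification from~\eqref{eq:time-mollif} with initial value $\psi_o^m$, and $\sigma_h \ge 0$ is a small correction (vanishing as $h \downarrow 0$ uniformly in $s$) chosen so that $v_{s,h} \ge \psi$ almost everywhere. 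The construction of $\sigma_h$ is analogous to the one in the proof of Lemma~\ref{l.variationalsol-is-supersol}. By Lemma~\ref{lem:mollifier}\,(iii), $\partial_t v_{s,h}^m \in L^{(m+1)/m}(\Omega_T)$, and the remaining requirements of the comparison class in Definition~\ref{d.obstacle-varsol} are inherited from $v$ and $u$ by convex combination, so $v_{s,h}$ is admissible in~\eqref{var-ineq-appendix}.

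The next step is to plug $v_{s,h}$ into~\eqref{var-ineq-appendix} at $\tau = T$ and expand in $s$. The gradient term yields
$$\tfrac12 \iint_{\Omega_T} \bigl(|\nabla v_{s,h}^m|^2 - |\nabla u^m|^2\bigr) = \tfrac12 \iint_{\Omega_T} \bigl(|\nabla \llbracket u^m \rrbracket_h|^2 - |\nabla u^m|^2\bigr) + s \iint_{\Omega_T} \alpha \nabla \llbracket u^m \rrbracket_h \cdot \nabla(v^m - \llbracket u^m \rrbracket_h) + O(s^2).$$
Since $\alpha(T) = 0$, the boundary term $I(u(T), v_{s,h}(T))$ involves only the mollification error, and using $\llbracket u^m \rrbracket_h(\cdot,0) = \psi_o^m$ the initial term $I(\psi_o, v_{s,h}(0))$ expands so that the coefficient of $s$ is $\alpha(0)\int_\Omega \psi_o(\psi_o^m - v^m(\cdot,0))\,dx$, plus an $O(\sigma_h)$ contribution.

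For the time term $\iint \partial_t v_{s,h}^m(v_{s,h} - u)$, I would use the splitting $\partial_t v_{s,h}^m \cdot v_{s,h} - \partial_t v_{s,h}^m \cdot u$, rewrite the first piece via $v_{s,h}\,\partial_t v_{s,h}^m = \tfrac{m}{m+1}\partial_t v_{s,h}^{m+1}$ (producing boundary terms at $t=0$ and $t=T$), and dominate the troublesome $u\,\partial_t \llbracket u^m \rrbracket_h$ using the pointwise bound~\eqref{time-deriv-moll}, exactly as in the proof of Lemma~\ref{l.variationalsol-is-supersol}. Collecting first-order-in-$s$ terms produces precisely $-\llangle \partial_t u, \alpha(v^m - u^m) \rrangle_{\psi_o}$ in the limit $h \downarrow 0$. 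After dividing by $s>0$ and letting first $s \downarrow 0$ and then $h \downarrow 0$ (in a suitably coupled manner), the desired inequality~\eqref{global-var-ineq} follows.

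The main obstacle is the careful handling of the zeroth-order-in-$s$ mollification error: because $v_{s,h}\vert_{s=0}$ equals $(\llbracket u^m \rrbracket_h + \sigma_h)^{1/m}$ rather than $u$, the variational inequality at $s=0$ is not a tight equality, producing $O(1)$ remainders in the $s$-expansion that do not vanish after a single division by $s$. One must therefore show that these zeroth-order terms reorganize into a nonnegative quantity that vanishes as $h \downarrow 0$, and then choose the joint $s\downarrow 0$, $h\downarrow 0$ limit compatibly with the convergence statements of Lemma~\ref{lem:mollifier}. The pointwise identity~\eqref{time-deriv-moll} already exploited in Lemma~\ref{l.variationalsol-is-supersol}, together with the strong convergence of $\llbracket u^m \rrbracket_h$ to $u^m$ in the relevant topologies, are the key ingredients that make this scheme work.
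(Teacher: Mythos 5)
Your overall strategy---perturb around a time-mollification of $u$, expand to first order in the perturbation parameter, control the time term via~\eqref{time-deriv-moll}, then divide by the parameter and pass to the limit---is the same as the paper's. However, there is a genuine gap in the construction of the admissible comparison map. In Definition~\ref{d.obstacle-varsol} the comparison maps must satisfy $v^m\in\psi^m+L^2(0,T;H^1_0(\Omega))$, i.e.\ they must carry the boundary values of $\psi$. Your additive correction $\sigma_h$, modeled on the constant $\|\psi^m-\mollifytime{\psi^m}{i}\|_\infty$ from the proof of Lemma~\ref{l.variationalsol-is-supersol}, destroys this property: adding a positive constant to $\mollifytime{u^m}{h,\psi_o}$ changes the lateral trace, so $v_{s,h}^m-\psi^m\notin L^2(0,T;H^1_0(\Omega))$ in general. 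Moreover, that construction requires $\|\psi^m-\mollifytime{\psi^m}{h}\|_\infty\to0$, which in Lemma~\ref{l.variationalsol-is-supersol} rests on the continuity of $\psi$; in the appendix setting the obstacle is only assumed to satisfy~\eqref{eq:g-cond}, so this uniform convergence is not available. The paper resolves both problems at once by taking the base point of the convex combination to be $\mollifytime{u^m}{h,\psi_o}-\mollifytime{\psi^m}{h,\psi_o}+\psi^m$: this is $\ge\psi^m$ pointwise (monotonicity of the mollification applied to $u^m-\psi^m\ge0$ with matching initial values), and its difference with $\psi^m$ equals $\mollifytime{u^m-\psi^m}{h,0}\in L^2(0,T;H^1_0(\Omega))$, so no extra constant is needed.

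Two further points. First, your proposed order of limits ($s\downarrow0$ before $h\downarrow0$) runs into exactly the difficulty you acknowledge: the zeroth-order-in-$s$ mollification error is $O(1)$ and does not survive division by $s$. The paper avoids this by fixing $\eps$ and letting $h\to0$ \emph{first}, at which point the zeroth-order remainders vanish and only the terms linear and quadratic in $\eps$ survive; division by $\eps$ and $\eps\downarrow0$ then yield the linearized gradient term. You gesture at a ``suitably coupled'' joint limit but do not supply the argument, so as written this step is incomplete. Second, inequality~\eqref{var-ineq-appendix} holds only for a.e.\ $\tau$, so one cannot simply evaluate at $\tau=T$; the paper first takes $\alpha$ vanishing near $T$, chooses $\tau$ along a subsequence where $\mollifytime{u^m}{h,\psi_o}(\tau)\to u^m(\tau)$ in $L^{\frac{m+1}{m}}(\Omega)$, and only afterwards lets $\tau\to T$ and removes the restriction on $\alpha$ by a cutoff argument. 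These are fixable, but the admissibility of the comparison map is the step that must be repaired for the proof to go through.
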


\begin{proof}

Let $v$ be a comparison map and $\eta,\alpha$ cutoff functions
satisfying the assumptions given in Definition~\ref{d.obstacle-wsol}. Without loss of generality we may assume that $0 \leq \alpha\leq 1$.

Let us use a comparison map 
$$
v_h^m = s v^m + (1-s) (\mollifytime{u^m}{h,\psi_o} - \mollifytime{\psi^m}{h,\psi_o} + \psi^m), 
$$
where we denote $s = s(t) := \eps \alpha(t)$
with $\eps \in (0,1)$, and where we used the time
mollification introduced in~\eqref{eq:time-mollif}. Observe that $v_h$ is an admissible comparison map. In particular $v_h \geq \psi$ as convex combination of functions enjoying that property and $v_h^m -\psi^m \in L^2(0,T;H^1_0(\Omega))$.

First, suppose that $\alpha$ vanishes in a neighbourhood of $T$ and let $\tau \in (0,T)$ be so large that $\alpha(t) = 0$ for all $t \in [\tau,T]$.
For the parabolic term in~\eqref{var-ineq-appendix} we have 
\begin{align*}
\iint_{\Omega_\tau} &\partial_t v_h^m (v_h - u) \, \d x \d t \\
&=
\tfrac{m}{m+1} \int_{\Omega} v_h^{m+1}\, \d x \bigg|_0^\tau \\
&\phantom{+} -
\iint_{\Omega_\tau}u \left(  \partial_t(s v^m) + \partial_t [(1-s) \mollifytime{u^m}{h,\psi_o}] + \partial_t [ (1-s)(\psi^m - \mollifytime{\psi^m}{h,\psi_o} )] \right) \, \d x \d t. 
\end{align*}
A part of the second term on the right hand side can be estimated by 
\begin{align*}
- \iint_{\Omega_\tau} u (1-s) \partial_t \mollifytime{u^m}{h,\psi_o} \, \d x \d t 
&\leq
- \tfrac{m}{m+1} \iint_{\Omega_\tau}(1-s) \partial_t \mollifytime{u^m}{h,\psi_o}^\frac{m+1}{m} \, \d x \d t \\
&= - \tfrac{m}{m+1} \int_{\Omega}(1-s)  \mollifytime{u^m}{h,\psi_o}^\frac{m+1}{m} \, \d x \bigg|_0^\tau \\
&\phantom{+} -
\tfrac{m}{m+1} \eps \iint_{\Omega_\tau} \alpha' \mollifytime{u^m}{h,\psi_o}^\frac{m+1}{m}\, \d x \d t.
\end{align*}
Since according to Lemma~\ref{lem:mollifier}\,(i), we have 
$$
\int_0^T \|\mollifytime{u^m}{h,\psi_o}(t) - u^m(t)\|^\frac{m+1}{m}_{L^\frac{m+1}{m}(\Omega)}\, \d t \xrightarrow{h\to 0} 0,
$$
it follows that along a subsequence $\|\mollifytime{u^m}{h,\psi_o}(t)
- u^m(t)\|_{L^\frac{m+1}{m}(\Omega)} \xrightarrow{h\to
  0} 0$ for a.e. $t \in (0,T)$. Let $\tau$ be an instant of time at which this convergence holds. Then, by passing to the limit $h\to 0$ (along the aforementioned subsequence) we obtain
\begin{align*}
\limsup_{h\to0} &\left( \iint_{\Omega_\tau} \partial_t v_h^m(v_h - u) \, \d x \d t - \int_{\Omega} I(u(\tau),v_h(\tau))\, \d x +\int_{\Omega} I (\psi_o,v_h(0)) \, \d x \right) \\
&\leq 
\eps \iint_{\Omega_\tau} \left[ \alpha' \left( \tfrac{1}{m+1}u^{m+1} -  u v^m \right) - \alpha u \partial_t v^m \right] \, \d x \d t \\
&\phantom{=} + \eps \alpha(0) \int_\Omega  \left[ \tfrac{1}{m+1} \psi_o^{m+1} - \psi_o v^m(\cdot,0) \right] \, \d x  .
\end{align*}

We also have that 
\begin{align*}
\iint_{\Omega_\tau} |\nabla v_h^m|^2 \, \d  x \d t \xrightarrow{h\to 0} \iint_{\Omega_\tau} |\nabla u^m + \eps\alpha \nabla(v^m-u^m) |^2 \, \d x\d t.
\end{align*}
By dividing every term in~\eqref{var-ineq-appendix} by $\eps> 0$, for the gradient terms we have 
\begin{align*}
\iint_{\Omega_\tau} &\frac{1}{2\eps}(|\nabla u^m + \eps\alpha \nabla(v^m-u^m)|^2 -|\nabla u^m|^2) \, \d x \d t \\
&\xrightarrow{\eps \to 0} 
\iint_{\Omega_\tau} \alpha \nabla u^m \cdot \nabla(v^m-u^m)\, \d x \d t.
\end{align*}
By combining all the estimates and passing to the limit $\tau \to T$,
the claim follows with $\alpha$ vanishing in a neighbourhood of
$T$. For $\alpha$ that is required to satisfy only $\alpha(T) = 0$ it
is deduced by a standard cutoff argument.
\end{proof}

\begin{lemma} \label{lem:global-wsol-is-local}
Suppose that $u$ is a weak solution to the obstacle problem according
to Definition~\ref{d.obstacle-wsol}, then $u$ is a weak solution to
the obstacle problem with $u=\psi$ on $\partial_p\Omega_T$ according to Definition~\ref{d.variational-obstacle}.
\end{lemma}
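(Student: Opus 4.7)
The plan is to derive the local inequality~\eqref{e.local_var_eq} from the global inequality~\eqref{global-var-ineq} by testing with a carefully constructed global comparison map and passing to a limit in a time mollification. The remaining ingredients of Definition~\ref{d.variational-obstacle}, namely $u^m - \psi^m \in L^2(0,T;H^1_0(\Omega))$ and $u(\cdot,0) = \psi_o$ a.e., are immediate from~\eqref{eq:boundary-data}$_1$ and from a suitable test of~\eqref{global-var-ineq} near $t = 0$ combined with the continuity $u \in C([0,T];L^{m+1}(\Omega))$.

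For the main step, given $v \in K'_\psi(\Omega_T)$ and cutoffs $\eta \in C^1_0(\Omega;\R_{\geq 0})$ and $\alpha \in W^{1,\infty}_0((0,T);\R_{\geq 0})$, I will use the comparison map
\[
  \tilde v_h^m
  :=
  \eta v^m
  + (1-\eta)\mollifytime{u^m}{h,\psi_o^m}
  + (1-\eta)\bigl(\psi^m - \mollifytime{\psi^m}{h,\psi_o^m}\bigr),
\]
which by linearity of~\eqref{eq:time-mollif} also equals $\eta v^m + (1-\eta)\mollifytime{u^m - \psi^m}{h, 0} + (1-\eta)\psi^m$. Since $u \geq \psi$ a.e.\ and the mollification is order preserving, $\tilde v_h^m \geq \psi^m$ a.e.; since $\eta$ has compact support in $\Omega$ and $u^m - \psi^m \in L^2(0,T;H^1_0(\Omega))$, also $\tilde v_h^m - \psi^m \in L^2(0,T;H^1_0(\Omega))$; and the time regularity $\partial_t \tilde v_h^m \in L^{\frac{m+1}{m}}(\Omega_T)$ and continuity $\tilde v_h \in C([0,T];L^{m+1}(\Omega))$ needed for admissibility in Definition~\ref{d.obstacle-wsol} follow from Lemma~\ref{lem:mollifier} combined with $v \in K'_\psi$ and the assumptions~\eqref{eq:g-cond} on $\psi$.

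Inserting $\tilde v_h$ into~\eqref{global-var-ineq} with $\alpha(0) = 0$ (so the initial boundary term in $\llangle\cdot,\cdot\rrangle_{\psi_o}$ vanishes) and splitting every integrand according to the $\eta$ and $(1-\eta)$ weights yields two contributions. The $\eta$-part of the parabolic bracket equals $\llangle\partial_t u, \alpha\eta(v^m - u^m)\rrangle$ as defined in Section~\ref{sec:varsol} and is independent of $h$, while the $\eta$-part of the gradient term converges to $\iint_{\Omega_T}\alpha\nabla u^m\cdot\nabla[\eta(v^m-u^m)]\,\d x\,\d t$ thanks to the $L^2(0,T;H^1_0(\Omega))$-convergence $\mollifytime{u^m - \psi^m}{h, 0}\to u^m - \psi^m$ from Lemma~\ref{lem:mollifier}\,(i); the $(1-\eta)$-part of the gradient term vanishes in the limit by the same convergence. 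For the $(1-\eta)$-part of the parabolic bracket I apply the convexity identity~\eqref{time-deriv-moll} to $u\partial_t\mollifytime{u^m}{h,\psi_o^m}$, integrate by parts in $t$ using $\alpha(0) = \alpha(T) = 0$, and use Lemma~\ref{lem:mollifier}\,(i) to pass to the limit; the resulting upper bound collapses algebraically to $\iint_{\Omega_T}(1-\eta)\alpha'\bigl[\tfrac{1}{m+1}u^{m+1} - u^{m+1} + \tfrac{m}{m+1}u^{m+1}\bigr]\,\d x\,\d t = 0$, so its $\limsup$ as $h\downarrow 0$ is nonpositive. Taking $\liminf$ of the resulting chain of inequalities in $h$ delivers~\eqref{e.local_var_eq}.

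The main obstacle is preserving this algebraic cancellation in the $(1-\eta)$-part, which hinges on the identity $\mollifytime{u^m}{h,\psi_o^m} - \mollifytime{\psi^m}{h,\psi_o^m} = \mollifytime{u^m - \psi^m}{h, 0}$ (removing the awkward $\psi_o^m$-initial contributions so that the various $(1-\eta)$-terms cancel in the limit) and on the convexity trick~\eqref{time-deriv-moll} (the only way to treat $u\partial_t\mollifytime{u^m}{h,\psi_o^m}$, since $\partial_t u$ itself is not available in any Lebesgue space). Once the local inequality is in place, the initial datum $u(\cdot, 0) = \psi_o$ a.e.\ is recovered by a standard test-function argument in~\eqref{global-var-ineq} near $t = 0$ combined with $u \in C([0,T];L^{m+1}(\Omega))$.
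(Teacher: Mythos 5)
Your main argument coincides with the paper's: the same comparison map $\eta v^m + (1-\eta)\bigl(\mollifytime{u^m}{h,\psi_o^m} - \mollifytime{\psi^m}{h,\psi_o^m} + \psi^m\bigr)$ inserted into~\eqref{global-var-ineq}, the same convexity estimate~\eqref{time-deriv-moll} for the term $u\,\partial_t\mollifytime{u^m}{h,\psi_o^m}$, and the same splitting into $\eta$- and $(1-\eta)$-weighted contributions with the $(1-\eta)$-part cancelling in the limit. The derivation of the local inequality~\eqref{e.local_var_eq} is therefore sound and essentially identical to the paper's.

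The one genuine gap is your treatment of the initial condition and the time continuity. Definition~\ref{d.obstacle-wsol} imposes only~\eqref{eq:u-class} and~\eqref{eq:boundary-data}$_1$, so a priori $u$ is merely in $L^\infty(0,T;L^{m+1}(\Omega))$; the continuity $u\in C([0,T];L^{m+1}(\Omega))$ is itself one of the requirements for membership in $K_\psi(\Omega_T)$ and hence one of the assertions to be proven --- it cannot be \emph{combined with} a test near $t=0$ as if it were already available. The paper obtains both $u\in C([0,T];L^{m+1}(\Omega))$ and $u(\cdot,0)=\psi_o$ a.e.\ by invoking \cite[Lemma 5.2]{BLS}; your proposal needs either that citation or a genuine argument (a Cauchy-sequence argument in $L^{m+1}(\Omega)$ based on the variational inequality) in its place.
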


\begin{proof}
Observe that~\cite[Lemma 5.2]{BLS} implies $u \in
C([0,T];L^{m+1}(\Omega))$ and $u(\cdot,0) = \psi_o$ a.e. in
$\Omega$. Therefore, it only remains to prove the variational
inequality~\eqref{e.local_var_eq}. 

Let $v\in K'_\psi(\Omega_T)$ and $\eta \in
C_0^1(\Omega,\R_{\geq0})$. Without loss of generality, we may assume
$0 \leq \eta \leq 1$. In the variational
inequality~\eqref{global-var-ineq},
we use a test function 
$$
v_h^m = \eta v^m + (1-\eta) (\mollifytime{u^m}{h,\psi_o} - \mollifytime{\psi^m}{h,\psi_o} + \psi^m),
$$
which satisfies the assumptions from Definition~\ref{d.obstacle-wsol}.
For the divergence part we have
\begin{align}\label{eq:conv-div-part}
\iint_{\Omega_T}& \alpha \nabla u^m \cdot \nabla (v_h^m - u^m) \, \d x
                 \d t\\\nonumber
  &= \iint_{\Omega_T} \alpha \nabla u^m \cdot \nabla (\eta (v^m-\mollifytime{u^m}{h,\psi_o})) \, \d x \d t \\\nonumber
&\phantom{+}+ \iint_{\Omega_T} \alpha \nabla u^m\cdot \nabla ( \mollifytime{u^m}{h,\psi_o} - u^m) \, \d x \d t \\\nonumber
&\phantom{+} + \iint_{\Omega_T} \alpha \nabla u^m\cdot \nabla ((1-\eta) (\psi^m - \mollifytime{\psi^m}{h,\psi_o})) \, \d x \d t \\\nonumber
&\xrightarrow{h\to 0} \iint_{\Omega_T} \alpha \nabla u^m\cdot \nabla (\eta (v^m - u^m)) \, \d x \d t.
\end{align}
Here we used Lemma~\ref{lem:mollifier}\,(i) to pass to the limit. Moreover, we may estimate 
\begin{align*}
\iint_{\Omega_T} - \alpha u \partial_t v_h^m \, \d x\d t &= \iint_{\Omega_T}-\alpha \eta u \partial_t v^m \, \d x \d t \\ 
&\phantom{+} +\iint_{\Omega_T} - \alpha (1-\eta) u \partial_t \mollifytime{u^m}{h,\psi_o} \, \d x \d t \\
&\phantom{+} + \iint_{\Omega_T} -\alpha (1-\eta) u \partial_t (\psi^m - \mollifytime{\psi^m}{h,\psi_o} ) \, \d x \d t \\
&\leq \iint_{\Omega_T}-\alpha \eta u \partial_t v^m \, \d x \d t \\ 
&\phantom{+} + \tfrac{m}{m+1}\iint_{\Omega_T} \alpha' (1-\eta) \mollifytime{u^m}{h,\psi_o}^\frac{m+1}{m} \, \d x \d t \\
&\phantom{+} + \tfrac{m}{m+1} \alpha(0) \int_{\Omega} (1-\eta) \psi_o^{m+1} \, \d x \\ 
&\phantom{+} + \iint_{\Omega_T} -\alpha (1-\eta) u \partial_t (\psi^m - \mollifytime{\psi^m}{h,\psi_o} ) \, \d x \d t .
\end{align*}
Observe that also
$$
-\psi_o v_h^m(\cdot,0) = -\eta \psi_o v^m(\cdot,0) - (1-\eta) \psi_o^{m+1}.
$$
Now by combining the estimates and passing to the limit $h \to0$, we obtain
\begin{align*}
\limsup_{h\to 0} \llangle \partial_t u, \alpha (v_h^m-u^m) \rrangle_{\psi_o} &\leq \iint_{\Omega_T} \eta \left[ \alpha' \left( \tfrac{1}{m+1}u^{m+1} -  u v^m \right) - \alpha u \partial_t v^m \right] \, \d x \d t \\
&\phantom{=} + \alpha(0) \int_\Omega \eta \left[ \tfrac{1}{m+1} \psi_o^{m+1} - \psi_o v^m(\cdot,0) \right] \, \d x.
\end{align*}
For cutoff functions $\alpha$ with compact support in $(0,T)$, as they
are considered in~\eqref{e.local_var_eq}, the last integral
vanishes. Consequently, for such cutoff functions the last formula becomes
\begin{equation*}
  \limsup_{h\to 0} \llangle \partial_t u, \alpha (v_h^m-u^m)
  \rrangle_{\psi_o}
  \le
  \llangle \partial_t u, \alpha\eta (v_h^m-u^m) \rrangle.
\end{equation*}
In combination with~\eqref{eq:conv-div-part}, this yields the
variational inequality~\eqref{e.local_var_eq} and concludes the proof.
\end{proof}

\end{document}